\documentclass[imslayout,noinfoline]{imsart}
\usepackage[OT1]{fontenc}
\usepackage{amssymb, amsfonts, amsmath, amsthm, graphics, epsfig}
\usepackage[colorlinks]{hyperref}

\startlocaldefs

\addtolength{\topmargin}{-.15in}
\addtolength{\textheight}{1in}

\numberwithin{equation}{section}
\theoremstyle{plain}

\newtheorem{theorem}{Theorem}[section]
\newtheorem{prop}{Proposition}[section]
\newtheorem{lemma}{Lemma}[section]

\newtheorem{example}{Example}[section]

\newtheorem{procedure}{Procedure}[section]

\long\def\comment#1{{}}

\def\Grp#1{\left(#1\right)}
\def\Cbr#1{\left\{#1\right\}}
\def\Sbr#1{\left[#1\right]}

\def\Abs#1{\left|#1\right|}
\def\Sb #1{\sb{({#1})}}
\def\nth#1{\frac{1}{#1}}
\def\cf#1{\mathbf{1}\Cbr{#1}}
\def\tp{\sp{\top}}
\def\vf#1{\boldsymbol{#1}}
\def\eno#1#2{{#1}_1, \ldots, {#1}_{#2}}

\def\vfc{{\vf c}}
\def\vfd{{\vf d}}
\def\vff{{\vf f}}
\def\vfr{{\vf r}}
\def\vfu{{\vf u}}
\def\vfv{{\vf v}}
\def\vfx{{\vf x}}

\def\CA{\mathcal{A}}

\def\CF{\mathcal{F}}
\def\CI{\mathcal{I}}
\def\CT{\mathcal{T}}
\def\bbG{\mathbb{G}}
\def\Reals{\mathbb{R}}
\def\Rats{\mathbb{Q}}

\def\pa{a}

\def\pt{\nu}

\def\pn{\mu}

\def\PA{G}

\def\PT{F}

\def\PD{Q}

\def\EP{\mathbb{F}}

\def\dto{\downarrow}
\def\uto{\uparrow}
\def\toi{\to\infty}
\def\convP{\stackrel{\rm P}{\to}}
\def\Linf{\mathop{\underline{\mathrm{lim}}}}
\def\Lsup{\mathop{\overline{\mathrm{lim}}}}

\def\pfdr{\mathrm{pFDR}}
\def\fdr{\mathrm{FDR}}

\def\unif{\mathrm{Unif}}
\def\sgn{\text{sgn}}

\def\eqed{\makebox[.1ex]{\rule{.15\textwidth}{0ex}\qed}}

\def\pmax#1{p_{#1,\text{max}}}
\def\pmix#1{p_{#1,\text{mix}}}
\def\pglb#1{p_{#1,\text{glb}}}
\def\pseq#1{p_{#1,\text{seq}}}

\def\NE#1{\times 10^{#1}}

\endlocaldefs

\begin{document}
\begin{frontmatter}
  \title{FDR control for multiple hypothesis testing on composite
    nulls}

  \runtitle{FDR control for composite nulls}

  \begin{aug}
    \author{
      \fnms{Zhiyi}
      \snm{Chi}
      \thanksref{nih}
      \ead[label=email]{zchi@stat.uconn.edu}
    }
    \thankstext{nih}{
      Research partially supported by NSF DMS~0706048 and NIH
      MH~68028.
    }

    \runauthor{Z. Chi}
    
    \affiliation{
      Department of Statistics, University of Connecticut
    }

    \address{
      Department of Statistics \\
      University of Connecticut \\
      215 Glenbrook Road, U-4120 \\
      Storrs, CT 06269 \\
      \printead{email}
    }
  \end{aug}

  \begin{abstract}
    Multiple hypothesis testing often involves composite nulls, i.e.,
    nulls that are associated with two or more distributions.  In many
    cases, it is reasonable to assume that there is a prior
    distribution on the distributions despite it is unknown.
    When the number of distributions under true nulls is finite, we
    show that under the above assumption, the false discover rate
    (FDR) can be controlled using $p$-values computed under
    constraints imposed by the empirical distribution of the
    observations.  Comparing to FDR control using $p$-values defined
    as maximum significance level over all null distributions, the
    proposed FDR control can have substantially more power.
  \end{abstract}

  \begin{keyword}[class=AMS]
    \kwd[Primary ]{62G10, 62H15}
    \kwd[; secondary ]{62G20}
  \end{keyword}

  \begin{keyword}
    \kwd{Multiple testing}
    \kwd{FDR}
    \kwd{composite null}
    \kwd{empirical process}
    \kwd{stopping time}
    \kwd{DKW inequality}
  \end{keyword}
  
\end{frontmatter}

\section{Introduction} \label{sec:intro}
In hypothesis testing, a relatively simple case is where the data
associated with true nulls and those with false nulls each follow a
common distribution (``simple \emph{versus\/} simple'')
\cite{efron:08, genovese:was:02}.   On the other hand, in many cases,
either the data associated with true nulls follow different
distributions (``composite nulls'') or those associated with false
nulls follow different distributions (``composite alternatives'').  In
the current literature on multiple testing, once appropriate test
statistics such as $p$-values are computed, testing procedures based
on the statistics usually do not distinguish between the simple and
composite cases \cite{lehmman:etal:05, lehmann:rom:05, vanderlaan:04a,
  genovese:was:06, sarkar:08}.  At the time when a procedure is
applied, it only has the test statistics available.  For this reason,
how the test statistics are defined plays an important role in the
overall performance of the procedure.

For composite nulls, $p$-values are usually defined as maximum
probabilities over all null distributions \cite{lehmann:rom:05}.
Following the random-effects extension for composite alternatives
\cite{genovese:was:02}, a Bayesian approach to calculating $p$-values
can be used.  Specifically, one assumes that there is a known prior
distribution on the null distributions.  Since the overall
distribution of the data associated with true nulls can now be
determined by an integral of the null distributions weighted by the
prior, the composite case is essentially reduced to the simple one.

The focus of the article lies between the above two approaches.  The
underlying premise is that there is a prior distribution on the null
distributions, however, the prior is unknown.  The basic observation
is that, in the presence of a large number of nulls, the empirical
distribution of the data provides useful information on the prior.
More specifically, the mixture of the null distributions, if
multiplied by the population fraction of true nulls, is dominated by
the empirical distribution of the data plus a small margin.  This
constrains the set of possible priors.  We shall explore the
observation for the case where there are only a finite number of null
distributions.  On the one hand, the $p$-values will be calculated as
maximum probabilities.  On the other, the maximization is over a range
of linear combinations of the null distributions, with the
coefficients being constrained.  As a result, the $p$-values can be
computed by linear programming.

The article does not consider the case of composite alternatives.  The
position here is that, since oftentimes no information on the
distributions under false nulls is available, it is sensible to regard
data associated with false nulls as being sampled from a single
overall distribution.

Although our focus is the evaluation of $p$-values under constraints,
we start with Section \ref{sec:max} on FDR control using maximum
probabilities without constraints.  That the BH procedure can control
the FDR in this case is known \cite{benjamini:yek:01}.  The purpose of
the section is to setup suitable framework for following sections, by
making a more general description of the BH procedure and indicating
where constrained maximization may be introduced.

Section \ref{sec:lp-bh} considers two ways to compute $p$-values.  The
first one is sequential, such that the $p$-value of each observation
is obtained under linear constraints imposed by observations whose
$p$-values have already been computed.  In the second one, in
principle, the $p$-values can be computed for the observations
simultaneously under the linear constraints imposed by the entire
data.  Both types of $p$-values are then processed by the BH
procedure.  Analytically, it is easier to establish FDR control based
on the first type of $p$-values because the sequential computation
allows one to use a stopping time argument \cite{storey:tay:sie:04}.
On the other hand, since there are more constraints imposed on the
second type of $p$-values, presumably they may lead to more
improvement in multiple testing.  However, the simulation study
reported in Section \ref{sec:simul} indicates that the two types of
$p$-values lead to similar performance of multiple testing.
Some possible explanations for this will be given at the end of
Section \ref{sec:simul}.  The study shows that, the BH procedure is
substantially more powerful when using the two types of $p$-values
than using $p$-values computed by the usual unconstrained
maximization.  In addition to power, we will also compare the FDR and
positive FDR (pFDR) realized by the $p$-values.

The results in Section \ref{sec:simul} indicate that in general, for
the case of composite nulls, the prior on the null distributions
cannot be estimated consistently.  Basically, this is because the
constraints imposed by the data cannot yield exact details of the
prior and also because the above two ways to evaluate $p$-values
usually select different linear combinations of the null
distributions for different observations.  This is in contrast to the
simple case, where the fraction of true nulls can be estimated
consistently \cite{benjamini:hoc:00, jin:cai:07, storey:tay:sie:04}.
Conceptually it is of interest to ask whether there are conditions
that allow the prior of the null distributions to be
estimated consistently.  In Section \ref{sec:mle}, for the case where
there are only a finite number of null distributions, a necessary
and sufficient condition will be given for the consistent estimation
of the prior using maximum likelihood estimation (MLE).  Note that, in
the MLE, the distribution under false nulls is unknown, and the
data are treated as though all are sampled from true nulls.  An
example will be given to show that for any finite set of linearly
independent null distributions, one can construct a large class of
distributions that satisfy the condition.

Section \ref{sec:discussion} contains a brief discussion.  Most
technical details are collected in the Appendix.

\subsection{Assumptions and notation} 
Let $\{\PT_\theta, \,\theta\in\Theta\}$ be a family of distributions
on $\Reals^d$.  Given random observations $\eno X n\in\Reals^d$, the
composite nulls to be tested are
\begin{align*}
  H_i: X_i\sim \PT_\theta \text{ for some } \theta\in\Theta.
\end{align*}
Each $\PT_\theta$ is a null distribution.

Our discussion will be under the following random mixture model.  The
distribution under false nulls is $\PA\not\in \{\PT_\theta,\,
\theta\in \Theta\}$ and the fraction of false nulls among all nulls is
$\pa\in (0,1)$.  There is a prior probability measure $\pt$ on
$\Theta$.  The data are sampled as follows.  Define probability
measure $\pn$ on $\Theta\cup\{*\}$, where $*$ is any element not in
$\Theta$, such that $\pn(\{*\})=\pa$ and $\pn(A) = (1-\pa) \pt(A)$ for
$A\subset \Theta$.  Sample $\eno\eta n$ iid $\sim \pn$.  If $\eta_i =
*$, then sample $X_i\sim \PA$; otherwise, sample $X_i \sim
\PT_{\eta_i}$.  Thus $\eta_i$ can be thought of as the identity of
$X_i$, indexing the distribution $X_i$ is sampled from.

Throughout we will make two assumptions.  First, $\pt$ is unknown.
Indeed, if $\pt$ is known, then under true $H_i$, $X_i \sim \PT
= \int \PT_\theta \pt(d\theta)$ and thus the composite null can be
reduced to a simple null.  Second, $\PA$ is unknown. 
This assumption is especially intended for the case where $\Theta$ is
finite.  Indeed, if $\PA$ is known, then for $n\gg 1$, both $\pa$ and
$\pt$ can be estimated accurately by the MLE, which reduces the
testing problem into one only involving simple nulls.

Recall that for a multiple testing procedure, if $R$ is the number of
rejected nulls, and $V$ that of rejected true nulls, then
\begin{align*}
  \fdr = E\Sbr{\frac{V}{R\vee 1}}, \quad
  \pfdr = E\Sbr{\frac{V}{R}\,\vline\, R>0}.
\end{align*}
Furthermore, if there are $n$ nulls and $N$ of them are true, then
\begin{align*}
  \text{power} = E\Sbr{\frac{R-V}{(n-N)\vee 1}}.
\end{align*}

\section{Testing based on maximum probabilities}
\label{sec:max}
Usually, a description of multiple testing procedure starts with
$p$-values, treating them as already available.  For our discussion
later, it is useful to start with how $p$-values are computed.  The
$p$-values are absent in the continuous version of our description,
but explicit in the discrete version.

Let $\{D_t: t\in \CI\}$ be a family of Borel sets in $\Reals^d$ 
satisfying the following conditions, where $\CI\not=\emptyset$ is an
open interval in $\Reals$.
\begin{enumerate}
\item[]\hspace{-1.2em}
  D1.  The family is increasing and right-continuous, i.e.
  $D_t = \bigcap_{s>t, s\in \CI} D_s$, for $t\in \CI$.
\item[]\hspace{-1.2em}
  D2.  $\bigcup_{t\in\CI} D_t = \Reals^d$.
\item[]\hspace{-1.2em}
  D3.  $\PA(\bigcap_{t\in\CI} D_t)=\PT_\theta(\bigcap_{t\in\CI}
  D_t)=0$, $\theta\in\Theta$.
\end{enumerate}

For each $\theta\in\Theta$, define
\begin{align} \label{eq:prob-region}
  \phi_\theta(t)=
  \begin{cases}
    \PT_\theta(D_t) & \text{if } t\in \CI, \\
    0 & \text{if } t\le \inf\CI, \\
    1 & \text{if } t\ge \sup\CI,
  \end{cases}
\end{align}
i.e., $\phi_\theta(t)$ is the significance level of the region $D_t$
under $\PT_\theta$.  By D2 and D3, $\phi_\theta$ is nondecreasing and
continuous at $\inf\CI$ and $\sup\CI$.  Denote
\begin{gather}
  M(t) = \sup_\theta \phi_\theta(t),
  \label{eq:Max}
\end{gather}
i.e., $M(t)$ is the significance level of $D_t$ associated with
$\{\PT_\theta,\, \theta\in\Theta\}$.  It is nondecreasing with
$M(t)=0$ for $t\le \inf\CI$ and $M(t)=1$ for $t\ge \sup\CI$.  

We can regard $M(t)$ as $\sup_\mu \int \phi_\theta(t)\,d\mu(\theta)$,
where the supremum is taken over all possible probability measures
$\mu$ on $\Theta$.  By our assumption, there is a prior $\pt$ on
$\Theta$.  If there is no information on the value of $\pt$, then the
supremum is justified.  If, on the other hand, it is known that $\pt$
satisfies certain conditions, then it makes sense to use the
conditions to constrain the supremum, even though the conditions may not
uniquely determine $\pt$.  This may yield a $M(t)$ closer to $\int
\phi_\theta(t)\,d\pt(\theta)$ that improves the performance 
of multiple testing.

Once $M(t)$ are in place, the BH procedure can be applied.  The
procedure can be described in two ways.  The continuous version
features a stopping time that may simplify the analysis of FDR control
(cf.\ \cite{storey:tay:sie:04}), while the discrete one is easier to
implement.  For $t\in \CI$, denote
\begin{gather*}
  R_n(t) = \sum_{i=1}^n \cf{X_i \in D_t}, \ \
  V_n(t) = \sum_{i=1}^n \cf{X_i \in D_t,\, \eta_i\in\Theta}.
\end{gather*}

\begin{procedure}[Continuous version] \rm \label{proc:max}
  Given control parameter $\alpha \in (0,1)$, let
  \begin{align*}
    \CI_R = \Cbr{t\in \CI: \frac{M(t)}{\alpha} \le
      \frac{R_n(t) \vee 1}{n}
    }.
  \end{align*}
  If $\CI_R\not=\emptyset$, set $\tau=\inf\CI_R$ and reject $H_i$
  if and only if $X_i \in D_\tau$.  Otherwise, set $\tau=\inf\CI$ and
  accept all $H_i$.  \qed
\end{procedure}

To describe the discrete version of Procedure \ref{proc:max}, define
\begin{align}\label{eq:s}
  s(x) = \inf\{t\in\CI: x\in D_t\}, \quad s_i = s(X_i), \
  i=1,\ldots,n.
\end{align}
By D2, the set in \eqref{eq:s} is nonempty, so $s(x)$ is
well-defined and $s(x) <\sup\CI$.

\begin{prop}\label{prop:equivalence}
  Under D1-3, the following statements hold.
  \begin{enumerate}
  \item[1)] $s_i\in \CI$ almost surely.
  \item[2)] For any $t\in\CI$, $s_i\le t \iff X_i\in D_t$ and hence
    $R_n(t)= \sum \cf{s_i\le t}$.
  \item[3)] Given $\theta$, if $X_i\sim \PT_\theta$, then
    $s_i\sim \phi_\theta$.
  \item[4)] For $i=1,\ldots, n$, the distribution function of $s_i$ is
    \begin{align*}
      \PD(t)=(1-\pa)\int \phi_\theta(t)\,\pt(d\theta) + \pa \PA(D_t).
    \end{align*}
  \item[5)] If $\phi_\theta\in C(\Reals)$ for all $\theta$, then
    $M(t)$ is left-continuous.
  \end{enumerate}
\end{prop}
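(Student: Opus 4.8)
The plan is to prove each of the five statements in turn, since they are largely independent and mostly follow from the defining properties D1--D3 together with the definition \eqref{eq:s} of $s(x)$.

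For statement 2), which is the structural heart of the proposition, I would first show the equivalence $s_i \le t \iff X_i \in D_t$ for fixed $t \in \CI$. The direction $X_i \in D_t \implies s_i \le t$ is immediate from the definition of $s$ as an infimum. For the converse, suppose $s_i \le t$; then for every $s > t$ with $s \in \CI$ there exists $t' \le s$ with $X_i \in D_{t'}$, and by the monotonicity in D1 this gives $X_i \in D_s$, so $X_i \in \bigcap_{s > t} D_s = D_t$ by the right-continuity in D1. This is exactly where right-continuity is essential, and I expect the careful handling of the infimum together with D1 to be the main obstacle: one must argue that the infimum is actually attained in the sense needed, and the right-continuity closes precisely this gap. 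Once the equivalence holds, the identity $R_n(t) = \sum \cf{s_i \le t}$ is a direct consequence of the definition of $R_n$. For statement 1), I would use D2 to guarantee the defining set is nonempty (as already noted in the text) so $s_i < \sup\CI$, and D3 to rule out $s_i = \inf\CI$: if $s_i = \inf\CI$ then $X_i \in \bigcap_{t} D_t$, an event of probability zero under every sampling distribution by D3, whence $s_i \in \CI$ almost surely.

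For statement 3), conditioning on $\theta$ and using that $X_i \sim \PT_\theta$, the distribution function of $s_i$ at $t \in \CI$ is $P(s_i \le t) = \PT_\theta(X_i \in D_t) = \phi_\theta(t)$ by statement 2) and definition \eqref{eq:prob-region}; the boundary values match $\phi_\theta$ by D2 and D3. Statement 4) then follows by the law of total probability over the mixture model: conditioning on $\eta_i$, with probability $1-\pa$ we have $\eta_i = \theta$ drawn from $\pt$ giving contribution $\phi_\theta(t)$, and with probability $\pa$ we have $X_i \sim \PA$ giving contribution $\PA(D_t)$. Integrating against $\pt$ and combining yields the stated form of $\PD(t)$.

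For statement 5), I would show that $M(t) = \sup_\theta \phi_\theta(t)$ is left-continuous whenever each $\phi_\theta$ is continuous. Fix $t \in \CI$ and a sequence $t_k \uparrow t$. Since each $\phi_\theta$ is nondecreasing, so is $M$, giving $\lim_k M(t_k) \le M(t)$. For the reverse inequality, fix $\varepsilon > 0$ and choose $\theta$ with $\phi_\theta(t) > M(t) - \varepsilon$; by continuity of $\phi_\theta$, $\phi_\theta(t_k) \to \phi_\theta(t)$, so for large $k$ we have $M(t_k) \ge \phi_\theta(t_k) > M(t) - 2\varepsilon$, which gives $\lim_k M(t_k) \ge M(t) - 2\varepsilon$; letting $\varepsilon \downarrow 0$ completes the argument. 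I do not expect this part to present difficulty, since it is the standard fact that a supremum of continuous nondecreasing functions is left-continuous.
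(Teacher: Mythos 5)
Your proof is correct and follows essentially the same route as the paper's: D3 plus the mixture model for 1), the infimum definition combined with D1's right-continuity for 2), the mixture model and law of total probability for 3)--4), and the standard supremum-of-continuous-nondecreasing-functions argument for 5). The only differences are cosmetic—you spell out the $\epsilon$-arguments and the right-continuity step in more detail than the paper does.
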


By Proposition \ref{prop:equivalence}, $\phi_\theta(s_i)$ is the
$p$-value of $X_i$ under $\PT_\theta$.  Therefore, $M(s_i)$ can be
used as a $p$-value under the composite null $H_i$
\cite{lehmann:rom:05}.

\begin{procedure}[Discrete version] \rm \label{proc:equivalent}
  Let $s\Sb 1 \le \ldots \le s\Sb n$ be the order statistics
  of $s_i$ and $s\Sb 0=\inf\CI$.  Reject $H_i$ if and only
  $s_i\le s\Sb R$, where 
  \begin{align*}
    R = \max\Cbr{i\ge 0: \frac{M(s\Sb i)}{\alpha} \le \frac{i}{n}}.
    \eqed
  \end{align*}
\end{procedure}

\begin{prop} \label{prop:FDR}
  Suppose $\phi_\theta\in C(\Reals)$ for all $\theta$.  Then 
  Procedures \ref{proc:max} and \ref{proc:equivalent} are the same,
  and both have $\fdr \le (1-\pa) \alpha$.
\end{prop}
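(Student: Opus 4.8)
The plan is to dispatch the two assertions separately: the coincidence of the two procedures is a continuity bookkeeping, while the FDR bound comes from recognizing the $M(s_i)$ as super-uniform $p$-values and running a stopping-time argument in the spirit of \cite{storey:tay:sie:04}.

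\emph{Coincidence of the procedures.} Continuity of the $\phi_\theta$ gives, through Proposition \ref{prop:equivalence}(5), that $M$ is left-continuous, while $R_n(t)=\sum\cf{s_i\le t}$ is nondecreasing and right-continuous by part (2). With $s\Sb1\le\cdots\le s\Sb n$ the order statistics and $R=\max\Cbr{i\ge0:M(s\Sb i)/\alpha\le i/n}$ the step-up index, I would check that $\tau$ and $s\Sb R$ determine the same cutoff, so that $\Cbr{X_i\in D_\tau}=\Cbr{s_i\le\tau}=\Cbr{s_i\le s\Sb R}$ by part (2); this matches Procedure \ref{proc:equivalent}. I would also record the boundary relation $M(\tau)\le\alpha R_n(\tau)/n$ on the event $\Cbr{R_n(\tau)\ge1}$, which is the defining inequality of $\CI_R$ together with one-sided continuity of $M$ and $R_n$ at the crossing, and is the only place continuity is genuinely used here.

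\emph{Super-uniformity.} The $X_i$ are independent, hence so are the $p$-values $p_i:=M(s_i)$. For a true null with $\eta_i=\theta$, part (3) gives $s_i\sim\phi_\theta$, so $\phi_\theta(s_i)\sim\unif(0,1)$; since $M\ge\phi_\theta$ pointwise, $\Cbr{p_i\le u}\subseteq\Cbr{\phi_\theta(s_i)\le u}$ and therefore $P(p_i\le u\mid\eta_i=\theta)\le u$. Thus, conditionally on the labels $\eta=(\eta_1,\dots,\eta_n)$, the true-null $p$-values are independent and stochastically larger than uniform, and Procedure \ref{proc:equivalent} is exactly the Benjamini--Hochberg procedure applied to $p_1,\dots,p_n$ at level $\alpha$.

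\emph{FDR bound.} Writing $V=V_n(\tau)$ and $R=R_n(\tau)$, I first clear the degenerate cases: no rejection forces $V=0$, and $M(\tau)=0$ forces $\phi_\theta(\tau)=0$ and hence $V=0$ as well. On the complement the boundary relation gives $R\ge nM(\tau)/\alpha$, so
\begin{align*}
\fdr=E\Sbr{\frac{V}{R\vee1}}\le\frac{\alpha}{n}\,E\Sbr{\frac{V_n(\tau)}{M(\tau)}}.
\end{align*}
It then remains to show $E\Sbr{V_n(\tau)/M(\tau)}\le n(1-\pa)$, which I would establish conditionally on $\eta$. Setting $q_i=\phi_{\eta_i}(s_i)$ for the true nulls, these are i.i.d.\ $\unif(0,1)$, and $s_i\le\tau$ forces $q_i=\phi_{\eta_i}(s_i)\le\phi_{\eta_i}(\tau)\le M(\tau)$, whence $V_n(\tau)\le\sum_{i:\eta_i\in\Theta}\cf{q_i\le M(\tau)}$. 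The plan is to scan the threshold from the side on which the procedure stops, revealing the false-null observations in full together with the configuration of the true-null $s_i$ above the current level, to argue that $\tau$ is a stopping time for the resulting reverse-time filtration and that an empirical ratio formed from the $q_i$ is a backward (super)martingale with terminal value $N_0=\sum_{i=1}^n\cf{\eta_i\in\Theta}$; optional stopping would then give $E\Sbr{V_n(\tau)/M(\tau)\mid\eta}\le N_0$, and averaging with $E\Sbr{N_0}=n(1-\pa)$ yields $\fdr\le(1-\pa)\alpha$. Equivalently, conditionally on $\eta$ one may simply invoke the known control of the Benjamini--Hochberg procedure for independent super-uniform nulls \cite{benjamini:yek:01} to get $E\Sbr{V/(R\vee1)\mid\eta}\le(N_0/n)\alpha$, and then average.

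\emph{Main obstacle.} The delicate point is this last step: $\tau$ is determined by the very smallest $s_i$, i.e.\ by exactly the true-null rejections one is trying to count, so the bound cannot be read off from the marginal super-uniformity of the $p_i$, and resolving this circularity is what forces the reverse-time stopping-time device. The composite structure makes it more awkward than the simple-null case: because $M=\sup_\theta\phi_\theta$ strictly dominates each $\phi_\theta$, the null $p$-values are only stochastically larger than uniform and no single change of variable turns $V_n(\cdot)$ into a genuine uniform empirical process, so the matching between the cutoff $M(\tau)$ and the uniform variables $q_i$ (through the domination $V_n(\tau)\le\sum\cf{q_i\le M(\tau)}$) and the ensuing martingale comparison must be handled with care. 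The continuity hypothesis is used precisely to keep $M$ left-continuous, to remove atoms from the null $p$-values, and to secure the boundary relation at the stopping time; without it the two procedures would differ and the stopping-time comparison would pick up correction terms.
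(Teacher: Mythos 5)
Your proposal is correct, and the route that actually completes it is the paper's own. The equivalence of the two procedures is proved in the paper exactly as you outline: monotonicity of $M$ and of the step function $R_n$ reduces the continuum of defining inequalities to the finitely many at the order statistics, with left-continuity of $M$ (Proposition \ref{prop:equivalence}, part 5) securing the boundary relation at the crossing. For the FDR bound, the paper does precisely what you offer in your ``equivalently'' clause: under a true null, $\PT_{\eta_i}(D_{s_i})\sim\unif(0,1)$ by continuity, the domination $M(s_i)\ge \PT_{\eta_i}(D_{s_i})$ makes the null $p$-values independent and super-uniform, and Theorem~5.1 of \cite{benjamini:yek:01} together with the comment following it gives $\fdr\le (N_0/n)\alpha$ conditionally on the labels, whence $(1-\pa)\alpha$ after averaging over $N_0$.

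Where you genuinely differ is in making the reverse-time optional-stopping argument your primary plan; the paper does not use it for this proposition, reserving that machinery for Theorem \ref{thm:lp-sequential}, and the obstacle you flag is real. Conditionally on $\eta$ the true-null $s_i$ are independent but not identically distributed ($s_i\sim\phi_{\eta_i}$), so no single normalization of $V_n(\cdot)$ is a backward martingale, and after the change of variables $q_i=\phi_{\eta_i}(s_i)$ it is not clear that $M(\tau)$ is a stopping time for a filtration built from the $q_i$, since $\tau$ is defined on the $s$-scale through $R_n$. The fix the paper uses when it does run this argument (Theorem \ref{thm:lp-sequential}) is to drop the conditioning: unconditionally the true-null $s_i$ are iid with cdf $Q_0(t)=\int \phi_\theta(t)\,\pt(d\theta)$, the process $V_n(t-)/Q_0(t)$ is a backward martingale for $\CF_t$, $\tau$ (after truncation) is a stopping time, and $M(\tau)\ge Q_0(\tau)\ge(1-\pa)Q_0(\tau)$ replaces your domination step. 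Note, however, that this template also needs $Q_0>0$ on $\CI$ and handles the jump $V_n(\tau)-V_n(\tau-)$ at the stopping time only at the cost of the additive term $E\Sbr{\cf{R>0}/(R\vee 1)}$ appearing in the bound of Theorem \ref{thm:lp-sequential}; followed literally here it would therefore deliver a slightly weaker statement than $\fdr\le(1-\pa)\alpha$. The clean constant in this proposition is exactly what the citation route buys, which is why the paper uses it.
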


In single hypothesis tests, nested rejection regions are usually
indexed by significance level.  For FDR control, other indices can be
used.  This allows one to think about the rejection regions in more
natural terms and also avoids problems when different regions have the
same significance levels.

\begin{example}\label{ex:p-value}\rm
  Suppose $X_i\in \Reals$.  To use lower-tail probabilities as
  $p$-values, set $D_t = (-\infty,t]$, $t\in\CI=\Reals$.  Then
  $s_i=X_i$ and $\phi_\theta(s_i) = \PT_\theta(X_i)$.  To use
  upper-tail probabilities as $p$-values, set $D_t = [-t, \infty)$,
  $t\in\CI = \Reals$.  Then  $s_i=-X_i$ and $\phi_\theta(s_i) =
  \PT_\theta([-s_i,\infty)) = \PT_\theta([X_i,\infty))$.  Suppose each
  $\PT_\theta$ is continuous at 0.  If we use $D_t = [-t,t]$, $t\in
  \CI=[0,\infty)$, then $s_i=|X_i|$ and $\phi_\theta(s_i) = F([-|X_i|,
 |X_i|])$.  \qed 
\end{example}

\section{Testing based on constrained maximum probabilities}
\label{sec:lp-bh}
\subsection{Outlines}
Testing using maximum probabilities can be very conservative.  Our
goal is to find alternative methods when $\Theta$ is a finite set
$\{\theta_k,\ k=1,\ldots, L\}$.  The probability measure $\pt$ on
$\Theta$ can now be specified by $\vf\pt = (\eno\pt L)\tp$ with $\pt_k
= \pt(\{\theta_k\})$.  Henceforth, a letter in boldface will stand for
an $L$-dimensional vector.  Denote $\phi_k(t) = \phi_{\theta_k}(t)$.
In this section, we assume that all $\PT_k$ and hence all $\phi_k(t)$
are continuous.  Denote
\begin{gather*}
  \EP_n(t) = R_n(t)/n,
\end{gather*}
i.e. the empirical distribution based on $\eno s n$ defined in
\eqref{eq:s}.

Instead of $M(t)=\max_k \phi_k(t)$ as in Procedure \ref{proc:max}, for
finite $\Theta$, the proposed functions to use have the general form 
\begin{align*}
  M_n(t) = \sup\{
    \vfc\tp \vf\phi(t): \vfc\in C, \ \vfc\tp \vf\phi \in \CA_{n,t}
  \},
\end{align*}
where $C$ is a suitable subset of
\begin{align*}
  \Delta = \{\vfc\in [0,1]^L: \ c_1+\cdots+c_L\le 1\}
\end{align*}
and for $n\ge 1$ and $t\in \CI$, $\CA_{n,t}$ is a family of functions
on $\CI$.  In general, $C$ is constructed based on deterministic
knowledge on $\vf\pt$ and $\pa$.  On the other hand, $\CA_{n,t}$ is
constructed based on the data and hence both $M_n(t)$ and $\CA_{n,t}$
may be random.  If $C=\Delta$ and $\CA_{n,t}$ is the entire family of
functions on $\CI$, then $M_n(t)$ is $\max_i\phi_i(t)$ and we recover
Procedure \ref{proc:max}.  By adding conditions to make $C$ or
$\CA_{n,t}$ smaller, $M_n(t)$ can be smaller than $\max_i\phi_i(t)$, 
which may result in higher power.  In particular, if $C=\{\vf\pt\}$,
then $M_n(t) = \vf\pt\tp\vf\phi(t)$, which reduces the testing problem
to the one for simple nulls.

Oftentimes, there is no direct knowledge on $\vf\pt$ or $\pa$ so one
has to set $C=\Delta$; constraints on $\vfc$ are indirectly imposed
through the condition $\vfc\tp\vf\phi\in \CA_{n,t}$.   Then $M_n(t)$
takes the form
\begin{gather} \label{eq:lp-sequential-pval}
  M_n(t) = \sup\{
    \vfc\tp \vf\phi(t): \vfc\in \Delta, \
    \vfc\tp \vf\phi \in \CA_{n,t}
  \}.
\end{gather}
In Section \ref{sec:simul}, we will consider the case where $C$ can be
chosen smaller than $\Delta$, and in Section \ref{sec:mle}, a case
where substantial knowledge on $\vf\pt$ can be attained by estimation
will be considered.

Recall that $(1-\pa)\vf\pt\tp\vf\phi(t)$ is the population fraction of
true nulls with $X_i\in D_t$.  In order for $M_n(t)$ not to
underestimate the fraction, a basic requirement is $M_n(t) \ge
(1-\pa)\vf\pt\tp \vf\phi(t)$.  In general, since $\CA_{n,t}$ is
random, this requires that $\CA_{n,t}$ have the property that as
long as $n$ is large enough, with probability close to 1,
$(1-\pa)\vf\pt\tp \vf\phi\in \CA_{n,t}$ for all $t\in \CI$.

A basic fact to use in order to satisfy the condition is that,
almost surely, as $n\toi$,
\begin{align*}
  \sup_t\Abs{\EP_n(t) \to \PD(t)}\to 0,
\end{align*}
where $\PD(t)$ is the distribution function of $s_i = s(X_i)$ defined in
\eqref{eq:s}, i.e.
\begin{align*}
  \PD(t) = (1-\pa)\vf\pt\tp \vf\phi(t) + \pa \PA(D_t).
\end{align*}
Then, with probability close to 1, $(1-\pa)\vf\pt\tp \vf\phi$ is
less than $\EP_n(t)$ plus a small margin.  Moreover, $\PD(t) -
(1-\pa)\vf\pt\tp \vf\phi(t) = \pa \PA(D_t)$ is increasing in
$t$.  Then for $n\gg 1$, with probability close to 1,
\begin{align*}
  \EP_n(u)-(1-\pa)\vf \pt\tp\vf\phi(u) > \EP_n(v) -
  (1-\pa)\vf \pt\tp \vf\phi(v) - \epsilon_n, \text{ for all } u>v.
\end{align*}
Therefore, in calculating $M_n(t)$, the maximization can be
constrained to those $\vfc$ such that, when they replace
$(1-\pa)\vf\pt$, the inequalities still hold.

\subsection{Construction using data sequentially}
Given the relative ease to establish FDR control by using a stopping
time as the random cut-off for rejection, we first consider a
construction of $\CA_{n,t}$ that allows a stopping time to be
defined.

Incorporating the facts discussed just now, a basic form of
$\CA_{n,t}$ is
\begin{align}
  \CA_{n,t} = \Cbr{\!\!
    \begin{array}{c}
      h\in C(\CI): h(s_i)\le \EP_n(s_i) + \epsilon_n
      \text{ for } s_i\ge t
      \\[.5ex]
      \EP_n(t_2)-\EP_n(t_1) \ge  h(t_2)-h(t_1) - \epsilon_n \text{ for}
      \\[.5ex]
      t_1, t_2 \in \CT_n\text{ with } t\le t_1<t_2
    \end{array}
    \!\!
  },
  \label{eq:constraint-seq}
\end{align}
where $\CT_n\subset\CI$ is a finite set of points.  Although
$\CT_n$ can contain any number of points, to reduce
computation, the number of points in $\CT_n$ needs to be relatively
small.

It is easy to see $M_n(t)=0$ if $t\le\inf\CI$.  Some other useful
properties of $M_n(t)$ are as follows.

\begin{lemma}\label{lemma:Mt-property}
  $M_n$ is always nondecreasing.  Furthermore, if $\phi_i\in
  C(\Reals)$ for all $i$, then almost surely,  1) $M_n$ is continuous
  at every $t$ other than $\eno s n$ and 2) it is left-continuous and
  has a right-hand limit at each $s_i$.
\end{lemma}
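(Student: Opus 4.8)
The plan is to establish monotonicity first and then the continuity properties, analyzing $M_n(t)$ through its variational definition in \eqref{eq:lp-sequential-pval} together with the explicit form of $\CA_{n,t}$ in \eqref{eq:constraint-seq}.

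\textbf{Monotonicity.} To show $M_n$ is nondecreasing, I would argue that as $t$ increases the feasible set shrinks, so the supremum cannot increase --- but the objective $\vfc\tp\vf\phi(t)$ also depends on $t$, so this is not immediate. Instead I would show directly that for $t_1<t_2$, any feasible $\vfc$ at $t_2$ remains usable to certify $M_n(t_1)\ge M_n(t_2)$ is the wrong direction; the correct statement is $M_n(t_1)\le M_n(t_2)$. The cleaner route is: take $\vfc$ optimal (or near-optimal) for $t_1$, with $h=\vfc\tp\vf\phi\in\CA_{n,t_1}$; since $\phi_i$ are nondecreasing, $h(t_2)\ge h(t_1)$. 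The difficulty is that $h$ need not lie in $\CA_{n,t_2}$ because the constraints in \eqref{eq:constraint-seq} restrict to $s_i\ge t_2$ and $t_2\le t_1'<t_2'$, which is a \emph{weaker} set of constraints than at $t_1$. Hence $\CA_{n,t_1}\subseteq\CA_{n,t_2}$, so the same $h$ stays feasible at $t_2$ and gives $M_n(t_2)\ge h(t_2)\ge h(t_1)$, which upon taking the sup over $\vfc$ feasible at $t_1$ yields $M_n(t_2)\ge M_n(t_1)$. I expect this containment of feasible sets to be the key structural observation for monotonicity.

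\textbf{Continuity away from the $s_i$.} For a fixed sample path, the constraints in \eqref{eq:constraint-seq} reference $\EP_n$ (a step function jumping only at the $s_i$), the values $\phi_i(t)$ (continuous under the hypothesis), and the finite set $\CT_n$. Away from the $s_i$, the set of indices $\{i: s_i\ge t\}$ is locally constant in $t$, and $\EP_n(s_i)$ on the right side of the first constraint does not move; similarly the second (slope) constraint involves only values at points of $\CT_n$, which are fixed. Thus the feasible region $\CA_{n,t}$ as a set of admissible coefficient vectors $\vfc$ is locally constant in $t$ near any point other than an $s_i$, and $M_n(t)=\sup_{\vfc}\vfc\tp\vf\phi(t)$ is a supremum of finitely-parametrized continuous functions of $t$ over a fixed compact feasible set; continuity then follows from continuity of $\vf\phi$ and a standard maximum-theorem argument. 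The main obstacle here is verifying that the effective feasible set of $\vfc$ is genuinely locally constant, i.e.\ that the membership conditions only change as $t$ crosses an $s_i$.

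\textbf{One-sided behavior at the $s_i$.} At a point $s_i$, crossing it changes the index set $\{j: s_j\ge t\}$ (an index drops out as $t$ increases past $s_i$) and possibly activates or deactivates a constraint tied to $\CT_n$. Because the constraints use the inequality $s_j\ge t$, the feasible set is right-continuous at $s_i$ in the sense needed: as $t\downarrow s_i$ the set of active constraints stabilizes, giving a right-hand limit, while the left-hand feasible set (at $t=s_i$ itself) includes the extra constraint from the index $j$ with $s_j=s_i$, making $M_n$ left-continuous there. I would formalize left-continuity by showing the feasible set and objective both converge from the left, and the existence of the right-hand limit by the same local-constancy argument applied on $(s_i,s_i+\delta)$. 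The delicate point, and likely the hardest, is carefully tracking how the semi-closed conditions $s_j\ge t$ and $t\le t_1<t_2$ behave as $t$ passes through a value equal to some $s_j$ or some point of $\CT_n$, to confirm the asymmetry yields exactly left-continuity plus a right-hand limit rather than full continuity or a jump in the wrong direction.
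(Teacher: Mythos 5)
Your proposal is correct and follows essentially the same route as the paper's proof: monotonicity from the containment $\CA_{n,t_1}\subseteq\CA_{n,t_2}$ for $t_1<t_2$ combined with the monotonicity of each $\phi_k$, and the continuity claims from local constancy of the feasible set of coefficients $\vfc$ together with a compactness/maximum-theorem argument (the paper phrases this as compactness of $K=\{\vfc\in\Delta:\vfc\tp\vf\phi\in\CA_{n,t}\}$ plus uniform continuity of $(\vfc,s)\mapsto\vfc\tp\vf\phi(s)$), with the same one-sided analysis at the $s_i$. One caveat, which your write-up shares with the paper's own proof: the slope constraints indexed by $\CT_n$ are not ``fixed'' but depend on $t$ through the condition $t\le t_1$, so local constancy of the feasible set --- and hence right-continuity --- is only obtained at points away from $\CT_n$ as well as away from the $s_i$; the paper's proof handles this by choosing intervals that avoid $\CT_n$, exactly as your local-constancy argument implicitly requires.
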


The continuous and discrete versions of the BH procedure using
$M_n(t)$ are described below.  Similar to Procedure
\ref{proc:equivalent}, the two versions are equivalent.  As in
Procedure \ref{proc:max}, the random variable $\tau$ in the continuous
version is a stopping time. 

\begin{procedure}\rm \label{proc:finite}
  Given control parameter $\alpha \in (0,1)$, let 
  \begin{align*}
    \CI_R = 
    \Cbr{
      t\in \CI:\ 
      \frac{M_n(t)}{\alpha} \le \frac{R_n(t)\vee 1}{n}
    }.
  \end{align*}
  If  $\CI_R\not=\emptyset$, set $\tau=\sup\CI_R$ and reject $H_i$ if
  and only if $s_i\le \tau$.  Otherwise, set $\tau=\inf\CI$ and accept
  all $H_i$.

  Equivalently, sort $s_i$ into $s\Sb 1 \le \ldots \le s\Sb n$ and set
  $s\Sb 0=\inf\CI$.  Reject $H_i$ if and only if $s_i\le s\Sb R$,
  where
  \begin{align*}
    R = \max\Cbr{
      i\ge 0: \frac{M_n(s\Sb i)}{\alpha} \le
      \frac{R_n(s\Sb i)\vee 1}{n}
    }.
    \eqed
  \end{align*}
\end{procedure}

For each $i$, $M_n(s\Sb i)$ is the maximum of $\vfc\tp\vf\phi(s\Sb
i)$, with $c_k$ satisfying
\begin{itemize}
\item[1)] $c_k\ge 0$, $\sum c_k \le 1$;
\item[2)] $\vfc\tp\vf\phi(s\Sb j) \le \EP_n(s\Sb j) + \epsilon_n$ for
  $j\ge i$;
\item[3)] $\EP_n(t_2) - \EP_n(t_1)\ge \sum_{k=1}^L c_k [\phi_k(t_2) -
  \phi_k(t_1)] + \epsilon_n$ for $t_1, t_2 \in \CT_n$ with $s\Sb i\le
  t_1<t_2$.
\end{itemize}
All the constraints are linear.  As a result, $M_n(s\Sb i)$ can be
computed by linear programming.  The computation is termed sequential
because each $M_n(s\Sb i)$ is computed based on the data greater than
$s\Sb i$.  Therefore, if we imagine that $s\Sb i$ are input one by
one, starting with the largest one, then $M_n(s\Sb i)$ can be computed
only after all $s\Sb j$, $j\ge i$, have been input.

The FDR control of Procedure \ref{proc:finite} is given in the next
result.  The main tool for the proof is martingale stopping time and
the Dvoretzky-Kiefer-Wolfowitz (DKW) inequality \cite{massart:90}. 
\begin{theorem} \label{thm:lp-sequential}
  Suppose 1) $\phi_i\in C(\Reals)$, 2) $\vf\pt\tp\vf\phi(t)>0$ for all
  $t\in \CI$ and 3) $\PA(D_t)$ in continuous in $t$.  Then for $n\ge
  1$, provided $\exp(-2n\epsilon_n^2) \le 1/2$, Procedure
  \ref{proc:finite} satisfies
  \begin{align*}
    \fdr \le \alpha + 2 (1+|\CT_n|) \exp(- 2n\epsilon_n^2) +
    E\Sbr{\frac{\cf{R>0}}{R\vee 1}}.
  \end{align*}
\end{theorem}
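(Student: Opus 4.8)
The plan is to bound the FDR by analyzing the rejection cut-off $\tau$ of Procedure \ref{proc:finite} as a stopping time and then controlling the expected ratio $V_n(\tau)/(R_n(\tau)\vee 1)$. First I would decompose the quantity of interest. Since $\tau=\sup\CI_R$ and at the cut-off the defining inequality $M_n(\tau)/\alpha\le R_n(\tau)/n$ holds (with $R_n(\tau)\ge 1$ on the event $R>0$), we have $R_n(\tau)\ge n\,M_n(\tau)/\alpha$. Writing $V_n=V_n(\tau)$ and $R_n=R_n(\tau)$, the goal is to estimate $E[V_n/(R_n\vee 1)]$, and the displayed bound suggests splitting this into a main term of size $\alpha$, an error term proportional to $(1+|\CT_n|)\exp(-2n\epsilon_n^2)$ coming from the empirical-process deviations, and the residual term $E[\cf{R>0}/(R\vee 1)]$.

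The heart of the argument is a \emph{good event} on which the constraints defining $\CA_{n,\tau}$ are compatible with the true mixture $(1-\pa)\vf\pt\tp\vf\phi$. By the DKW inequality, $P(\sup_t|\EP_n(t)-\PD(t)|>\epsilon_n)\le 2\exp(-2n\epsilon_n^2)$; intersecting with the finitely many increment constraints indexed by pairs in $\CT_n$ contributes the factor $(1+|\CT_n|)$. On the good event $G$, the function $h=(1-\pa)\vf\pt\tp\vf\phi$ satisfies both the pointwise domination $h(s_i)\le \EP_n(s_i)+\epsilon_n$ and the increment inequalities in \eqref{eq:constraint-seq}, because $\PD(t)-h(t)=\pa\PA(D_t)$ is nondecreasing by assumption 3). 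Hence $(1-\pa)\vf\pt\in C$ is feasible, so $M_n(t)\ge (1-\pa)\vf\pt\tp\vf\phi(t)$ for all $t\in\CI$ on $G$. This is the key structural consequence: $M_n$ never underestimates the true null fraction on $G$.

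Next I would set up the stopping-time/martingale estimate for the numerator. Since $\tau$ is determined by the data at values $s_i\ge\tau$ (the sequential construction ensures $M_n(s\Sb i)$ depends only on observations at or above $s\Sb i$), $\tau$ is a stopping time with respect to the reverse filtration, and one can invoke the optional-stopping argument of \cite{storey:tay:sie:04}. The idea is that $V_n(t)-n(1-\pa)\vf\pt\tp\vf\phi(t)$, suitably normalized, is a martingale in the backward direction, so one controls $E[V_n(\tau)\,/\,(n M_n(\tau))]$ by $E[V_n(\tau)/(n(1-\pa)\vf\pt\tp\vf\phi(\tau))]$ on $G$, which by the martingale property is bounded by a constant of order $1$. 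Combining $R_n\ge n M_n(\tau)/\alpha$ with $M_n(\tau)\ge(1-\pa)\vf\pt\tp\vf\phi(\tau)$ gives, on $G\cap\{R>0\}$,
\begin{align*}
  \frac{V_n}{R_n\vee 1}\le \frac{\alpha\,V_n}{n\,M_n(\tau)}
  \le \frac{\alpha\,V_n}{n(1-\pa)\vf\pt\tp\vf\phi(\tau)}.
\end{align*}
Taking expectations and applying the martingale bound yields the $\alpha$ term (the factor $1-\pa\le 1$ is absorbed), while the complement $G^c$ contributes at most $2(1+|\CT_n|)\exp(-2n\epsilon_n^2)$ by DKW. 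The residual $E[\cf{R>0}/(R\vee 1)]$ arises as the boundary correction from replacing the continuous martingale identity by its discrete counterpart at the last jump.

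The main obstacle I anticipate is making the backward-martingale/optional-stopping step rigorous in the presence of the \emph{random, data-dependent} constraint set $\CA_{n,t}$: unlike the classical simple-null setting, here $M_n$ itself depends on the empirical process, so $\tau$ is not a stopping time for the naive filtration generated by the $p$-values alone. The careful point is to verify that conditioning on $G$ does not destroy the martingale structure, i.e. that the feasibility of $(1-\pa)\vf\pt$ can be inserted into the stopping-time estimate without circularity. Handling the correlation between the event $G$ and the stopping time $\tau$, and ensuring the DKW term and the martingale term combine additively rather than multiplicatively, is where the bulk of the technical work will lie.
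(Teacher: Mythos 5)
Your proposal follows essentially the same route as the paper's proof: a DKW-based good event on which $(1-\pa)\vf\pt\tp\vf\phi$ remains feasible for the constraints (hence $M_n(t)\ge(1-\pa)\vf\pt\tp\vf\phi(t)$), combined with optional sampling of a backward martingale at the stopping time $\tau$, plus a separate jump term. However, the step you defer as ``where the bulk of the technical work will lie'' is in fact the crux, and it has a one-line resolution that your sketch is missing: one never conditions on the good event $\Gamma_n$ at all. On $\Gamma_n$ one has the pointwise bound $V_n(\tau-)/(R_n(\tau)\vee 1)\le \alpha V_n(\tau-)/[n(1-\pa)\vf\pt\tp\vf\phi(\tau)]$, and since the right-hand side is nonnegative, $E\Sbr{\frac{V_n(\tau-)}{R_n(\tau)\vee 1}\cf{\Gamma_n}}\le E\Sbr{\frac{\alpha V_n(\tau-)}{n(1-\pa)\vf\pt\tp\vf\phi(\tau)}}=\alpha$, where the last equality is the \emph{unconditional} optional sampling identity $E[V_n(\tau-)/\vf\pt\tp\vf\phi(\tau)]=(1-\pa)n$; the complement contributes at most $P(\Gamma_n^c)$ because the FDP is bounded by $1$. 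Additivity is thus automatic, and there is no interaction between $\Gamma_n$ and $\tau$ to control.

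Three concrete repairs are needed to turn your outline into a proof. First, the backward martingale is the ratio $V_n(t-)/\vf\pt\tp\vf\phi(t)$ with left limits, not the difference $V_n(t)-n(1-\pa)\vf\pt\tp\vf\phi(t)$; the difference is not a martingale for the backward filtration, so ``suitably normalized'' must mean the ratio. Second, the left limit is not cosmetic: optional sampling applies to $V_n(\tau-)$, and the residual term $E[\cf{R>0}/(R\vee 1)]$ is exactly the jump, via $V_n(\tau)-V_n(\tau-)\le\cf{R>0}$ a.s.\ (ties among true-null $s_i$ have probability zero, and $R_n(\tau)=0$ forces a zero jump); it is not a vague boundary correction. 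Third, your DKW accounting does not produce the stated exponent: two-sided DKW with margin $\epsilon_n$ only controls increments up to $2\epsilon_n$, which would degrade the bound to $\exp(-n\epsilon_n^2/2)$. The paper instead applies, for each anchor $t_i\in\CT_n$, the one-sided anchored bound $P\bigl(\sup_{t\ge t_i}\{[\PD(t)-\PD(t_i)]-[\EP_n(t)-\EP_n(t_i)]\}\ge\epsilon_n\bigr)\le\exp(-2n\epsilon_n^2)$, proved by a quantile transformation, and a union bound over the $|\CT_n|$ anchors plus the pointwise constraint gives the factor $1+|\CT_n|$ with the correct exponent. Finally, making ``$\tau$ is a stopping time'' rigorous requires real work that your sketch waves at: the paper proves $\CF_t$-measurability of $M_n(t)$ by reducing the supremum to rational coefficient vectors, and handles $\sup\CI=\infty$ by truncating to $\tau_c$, applying optional sampling there, and passing to the limit by dominated convergence.
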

The bound contains terms in addition to $\alpha$.  For
appropriate $\epsilon_n$ and $\CT_n$, the term $2(1+|\CT_n|) \exp(-
2n\epsilon_n^2)$ is $o(1)$ as $n\toi$.  Under certain conditions, $R$
is of the same order as $n$ and hence the bound shows $\fdr$ can be
asymptotically controlled at $\alpha$.  However, the simulation study
in Section \ref{sec:simul} indicates that usually the realized $\fdr$
is substantially lower than $\alpha$, which is reasonable because
$M_n(t)$ is an overestimation of $(1-\pa)\vf\pt\tp\vf\phi(t)$.

\subsection{Construction using entire data}
In place of $\CA_{n,t}$ which depends on $t$, we can use a single
family of functions $\CA_n$.  In order to impose maximum amount of
linear constraints, $\CA_n$ should incorporate all $X_i$.  Based on the
same considerations underlying \eqref{eq:constraint-seq}, we define
\begin{align}  \label{eq:constraint-global}
  \CA_n = \Cbr{
    \begin{array}{c}
      h\in C(\CI): h(s_i)\le \EP_n(s_i) + \epsilon_n \text{ for all }
      s_i \\[.5ex]
      \EP_n(t_2)-\EP_n(t_1) \ge  h(t_2)-h(t_1) - \epsilon_n
      \\[.5ex]
      \text{for } t_1, t_2\in \CT_n \text{ with } t_1<t_2
    \end{array}
  }.
\end{align}
Corresponding to \eqref{eq:lp-sequential-pval}, for $t\in\CI$, define
\begin{align} \label{eq:lp-global-pval}
  M_n(t) = \sup\Cbr{
    \vfc\tp \vf\phi(t):\, \vfc\in \Delta, \,
    \vfc\tp \vf\phi \in \CA_n
  }.
\end{align}
It is easy to see that $M_n$ is nondecreasing.  Therefore,
corresponding to Procedure \ref{proc:finite}, the following BH
procedure obtains.

\begin{procedure}\rm \label{proc:finite2}
  Given control parameter $\alpha \in (0,1)$, let
  \begin{align*}
    \CI_R = 
    \sup\Cbr{
      t\in \CI:\ 
      \frac{M_n(t)}{\alpha} \le \frac{R_n(t)\vee 1}{n}
    }.  
  \end{align*}
  If $\CI_R\not=\emptyset$, set $\tau=\sup\CI_R$ and reject
  $H_i$ if and only if $s_i\le \tau$.  Otherwise, set $\tau=\inf\CI$
  accept all $H_i$.

  Equivalently, sort $s_i$ into $s\Sb 1 \le \ldots \le s\Sb n$ and set
  $s\Sb 0=\inf\CI$.  Reject $H_i$ if and only if $s_i\le s\Sb R$,
  where
  \begin{align*}
    R = \max\Cbr{
      i\ge 0: \frac{M_n(s\Sb i)}{\alpha} \le \frac{R_n(s\Sb i)\vee 1}{n}
    }.
    \eqed
  \end{align*}
\end{procedure}

Like Procedure \ref{proc:finite}, $M_n(s\Sb i)$ can be computed using
linear programming.  For comparision, we list the constraints for the
maximization.
For each $i$, $M_n(s\Sb i)$ is the maximum of $\vfc\tp\vf\phi(s\Sb
i)$, with $c_k$ satisfying
\begin{itemize}
\item[1)] $c_k\ge 0$, $\sum c_k \le 1$;
\item[2)] $\vfc\tp\vf\phi(s\Sb j) \le \EP_n(s\Sb j) + \epsilon_n$ for
  all $j=1,\ldots,n$.
\item[3)] $\EP_n(t_2) - \EP_n(t_1)\ge \sum_{k=1}^L c_k [\phi_k(t_2) -
  \phi_k(t_1)] + \epsilon_n$ for all $t_1, t_2 \in \CT_n$ with
  $t_1<t_2$.
\end{itemize}
It is worth pointing out that although the set of constraints on
$\vfc$ is the same for all $s_i$, for different $i$, because
$\vf\phi(s_i)$ are different, the value of $\vfc$ that yields
$M_n(s_i)$ will be different.

Unlike Procedures \ref{proc:max} and \ref{proc:finite}, since $\tau$
in Procedure \ref{proc:finite2} is determined by the entire $\eno s
n$, it is not a stopping time.  Because the martingale stopping time
argument cannot be used to establish FDR control for finite $n$, we
will work out an asymptotic statement instead.

For $s\in \Reals$ and $S\subset\Reals$, denote the distance from $s$
to $S$ by $d(s,S) = \inf\{|s-t|:\,t\in S\}$.  Define $\delta(S,T) =
\sup\{d(s,S):\, s\in T\}$ for $S$, $T\subset \Reals$.  A sequence
$S_n$ of finite sets is said to be increasingly dense in $T$ if for
any $r>0$, $\delta(S_n,\, T\cap [-r,r])\to 0$ as $n\toi$.
\begin{theorem} \label{thm:FDRg}
  Suppose 1) all $\phi_i$ are continuous and $\vfc\tp\vf\phi$ is
  strictly increasing in $\CI$ 2) $\PA(D_t)$ is continuous in $t$, and
  3) as $n\toi$, $\epsilon_n\to 0$, $n\epsilon_n^2\toi$ and $\CT_n$ is
  increasingly dense in $\CI$.  Then, under Assumption A given below,
  for Procedure \ref{proc:finite2}, $\Lsup_{n\toi} \fdr \le \alpha$.

  Furthermore, asymptotically the procedure is equivalent to the one
  that reject $H_i$ if and only if $s_i\le t_*$, where $t_*$ is
  defined in \eqref{eq:tau-fix} below.
\end{theorem}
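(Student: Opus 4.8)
My plan is to pass from the expectation defining $\fdr$ to an almost-sure analysis, showing that every random ingredient of Procedure \ref{proc:finite2} converges to a deterministic limit so that the realized false-discovery proportion converges to a number that is visibly $\le\alpha$. Writing $V=V_n(\tau)$ and $R=R_n(\tau)$, the ratio $V/(R\vee1)$ lies in $[0,1]$, so by reverse Fatou it suffices to produce an event of probability one on which $\Lsup_{n\toi}V/(R\vee1)\le\alpha$; then $\Lsup_{n\toi}\fdr=\Lsup_{n\toi}E\Sbr{\frac{V}{R\vee1}}\le\alpha$. The deterministic limit of the rejection threshold will be the point $t_*$ of \eqref{eq:tau-fix}, which produces the ``furthermore'' statement at the same time.

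First I would collect the uniform laws of large numbers. The Glivenko--Cantelli fact quoted in Section \ref{sec:lp-bh} gives $\sup_t\Abs{\EP_n(t)-\PD(t)}\to0$ a.s., and the same argument applied to the true-null counts gives $\sup_t\Abs{V_n(t)/n-(1-\pa)\vf\pt\tp\vf\phi(t)}\to0$ a.s.\ (recall $E\,V_n(t)=n(1-\pa)\vf\pt\tp\vf\phi(t)$ by Proposition \ref{prop:equivalence}). Next, using the DKW inequality together with the fact that $\PD-(1-\pa)\vf\pt\tp\vf\phi=\pa\PA(D_t)$ is nonnegative and nondecreasing, I would verify that $\vfc=(1-\pa)\vf\pt$ satisfies all constraints defining $M_n(t)$ once $\sup_t\Abs{\EP_n-\PD}\le\epsilon_n/2$, an event of probability at least $1-2\exp(-n\epsilon_n^2/2)\to1$. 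This makes $(1-\pa)\vf\pt$ feasible, so that $M_n(t)\ge(1-\pa)\vf\pt\tp\vf\phi(t)$ for all $t$, eventually a.s.

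The central step is to identify the deterministic limit of $M_n$. Since $\vfc$ ranges over the fixed finite-dimensional polytope $\Delta$, the feasible set of the program is a random convex polytope in $\Reals^L$ cut out by the constraints $\vfc\tp\vf\phi\le\EP_n+\epsilon_n$ at the increasingly dense data points and the increment constraints on the increasingly dense grid $\CT_n$. As $\epsilon_n\to0$ and both point sets fill out $\CI$, these pass to $\vfc\tp\vf\phi\le\PD$ and ``$\PD-\vfc\tp\vf\phi$ nondecreasing,'' and I would prove that $M_n\to M_*$ uniformly, where
\[
  M_*(t)=\sup\Cbr{\vfc\tp\vf\phi(t):\ \vfc\in\Delta,\ \vfc\tp\vf\phi\le\PD,\ \PD-\vfc\tp\vf\phi\text{ nondecreasing}}.
\]
I expect this uniform LP-stability to be the main obstacle: one must show the optimal value is stable under the joint perturbation of infinitely many random constraints, bounding it from above (each $M_n$-feasible $\vfc$ is asymptotically $M_*$-feasible, via continuity of $\vf\phi$ and the density of $\{s_i\}$ and $\CT_n$) and from below (a near-optimal $M_*$-feasible $\vfc$ survives the $\epsilon_n$-relaxation). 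Note $(1-\pa)\vf\pt$ is $M_*$-feasible, so $M_*\ge(1-\pa)\vf\pt\tp\vf\phi$, and $M_*$ is continuous because $\Delta$ is compact and $\vf\phi$ continuous.

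Finally I would transfer the convergence to the threshold. With $\EP_n=R_n/n\to\PD$ and $M_n\to M_*$ uniformly, the set $\{t:\,M_n(t)\le\alpha\EP_n(t)\}$ converges to $\{t:\,M_*(t)\le\alpha\PD(t)\}$, so $\tau=\sup\CI_R\to t_*:=\sup\Cbr{t\in\CI:\ M_*(t)\le\alpha\PD(t)}$, the point defined in \eqref{eq:tau-fix}. Here the strict monotonicity of $\vfc\tp\vf\phi$ (hypothesis 1) together with Assumption A is exactly what is needed to make the crossing clean, i.e.\ $M_*(t_*)=\alpha\PD(t_*)$ with $M_*<\alpha\PD$ just to the left, so the random supremum cannot linger away from $t_*$ (and, on $\{R>0\}$, cannot collapse to $\inf\CI$); since $\tau$ is not a stopping time, this clean-crossing argument replaces the martingale bound of Theorem \ref{thm:lp-sequential}. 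On the limiting event, $V_n(\tau)/n\to(1-\pa)\vf\pt\tp\vf\phi(t_*)$ and $R_n(\tau)/n\to\PD(t_*)=M_*(t_*)/\alpha$ (using continuity of the limits at $t_*$), whence
\[
  \frac{V_n(\tau)}{R_n(\tau)\vee1}\longrightarrow\frac{(1-\pa)\vf\pt\tp\vf\phi(t_*)}{\PD(t_*)}=\alpha\cdot\frac{(1-\pa)\vf\pt\tp\vf\phi(t_*)}{M_*(t_*)}\le\alpha,
\]
the last inequality being $M_*(t_*)\ge(1-\pa)\vf\pt\tp\vf\phi(t_*)$. Bounded convergence then gives $\Lsup_{n\toi}\fdr\le\alpha$; and since $\PD$ is continuous at $t_*$, the rule $s_i\le\tau$ agrees asymptotically with $s_i\le t_*$, which is the asserted equivalence.
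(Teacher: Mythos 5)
Your proof has the same skeleton as the paper's: your $M_*$ is the paper's $m$ in \eqref{eq:mt}, your sandwich of feasible sets is the content of the paper's Lemmas \ref{lemma:Gamma-conv}--\ref{lemma:M-conv}, your clean-crossing argument for $\tau\to t_*$ is the paper's $\delta$-argument under Assumption A, and the final ratio computation is identical. However, your framing of the whole argument as an almost-sure statement plus reverse Fatou contains a genuine flaw. You claim that $(1-\pa)\vf\pt$ is feasible for the program defining $M_n$ ``eventually a.s.''\ because the DKW event $\{\sup_t|\EP_n(t)-\PD(t)|\le\epsilon_n/2\}$ has probability at least $1-2\exp(-n\epsilon_n^2/2)\to1$. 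Probability tending to one does not imply almost-sure eventual occurrence, and under the theorem's only growth hypothesis $n\epsilon_n^2\toi$ the failure probabilities need not be summable (take $n\epsilon_n^2=\ln\ln n$), so Borel--Cantelli is unavailable. In fact the claim can fail outright: by the law of the iterated logarithm for the Kolmogorov--Smirnov statistic, $\sup_t|\EP_n(t)-\PD(t)|$ has exact almost-sure order $\sqrt{\ln\ln n/n}$, so choosing $\epsilon_n$ to be a small constant multiple of $\sqrt{\ln\ln n/n}$ (permitted by hypothesis 3) makes your feasibility event fail infinitely often with probability one. The repair is exactly what the paper does: keep every statement in probability --- $P(\Gamma_0\subset\Gamma_n\subset\Gamma_r)\to1$, $\sup|M_n-m|\convP 0$, $\tau\convP t_*$ --- and pass to the expectation by dominated convergence for in-probability convergence of the bounded variable $V_n(\tau)/(R_n(\tau)\vee1)$ (equivalently, the a.s.-convergent-subsequence trick), rather than by reverse Fatou on an almost-sure event.

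A second, lesser point: you correctly identify the uniform LP-stability $\sup|M_n-M_*|\to0$ as the crux but leave it as a sketch, and the phrase ``each $M_n$-feasible $\vfc$ is asymptotically $M_*$-feasible'' hides the step that actually needs proof. An $M_n$-feasible $\vfc$ is, with high probability, only feasible for an $r$-relaxed limiting program; to convert this into closeness of \emph{values} one needs the device in the paper's Lemmas \ref{lemma:Gamma-conv} and \ref{lemma:M-conv}: the relaxed feasible sets $\Gamma_r$ are compact, convex and decrease to $\Gamma_0$ as $r\dto 0$, hence every point of $\Gamma_r$ lies within distance $\sigma(r)\to0$ of $\Gamma_0$, and on the event $\{\Gamma_0\subset\Gamma_n\subset\Gamma_r\}$ this yields $0\le M_n(t)-m(t)\le\sqrt{L}\,\sigma$ uniformly in $t$. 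Upper semicontinuity of the value of a linear program under perturbation of infinitely many constraints is not automatic, so this compactness/nestedness argument (or an equivalent) must be supplied before your threshold-convergence and limiting-ratio steps can be run.
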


Intuitively, as $n\toi$, in certain sense $\CA_n$ should tend to $\CA
= \{h \in C(\CI)$: $\PD - h\ge 0$ is nondecreasing$\}$.  Consequently,
$M_n(t)$ should tend to
\begin{align} \label{eq:mt}
  m(t) =
  \sup\{
    \vfc\tp \vf\phi(t):\, \vfc\in\Delta, \, \vfc\tp \vf\phi\in
    \CA
  \}.
\end{align}
If this is true, then, as in \cite{genovese:was:02}, the asymptotic of
$\fdr$ as $n\toi$ may be characterize by a fixed point derived from
$m(t)$ and $\PD(t)$.  Let
\begin{align}
  t_* = \sup\Cbr{t\in\CI:\ m(t) \le \alpha \PD(t)}.
  \label{eq:tau-fix}
\end{align}
\subsection*{\bf Assumption A} $t_*\in \CI$ and there is $t_0 <t_*$,
such that $m(t) < \alpha \PD(t)$ on $(t_0, t_*)$.

\section{Numerical study} \label{sec:simul}

\subsection{Setup}
Because the properties of $M_n(t)$ in \eqref{eq:lp-sequential-pval}
and \eqref{eq:lp-global-pval} are hard to keep track of, it is
difficult to analyze the power and pFDR of Procedures
\ref{proc:finite} and \ref{proc:finite2}.  We resort to 
numerical simulations to get a handle to these two quantities.  For
comparison, Procedure \ref{proc:max} and the BH procedure with the
prior probabilities $\eno\pt L$ being known are also included.

We only consider univariate observations.  To use lower-tail
$p$-values, we set $D_t = (-\infty, t]$.  By \eqref{eq:s}, if an
observation $X$ takes value $x$, then $s(X) = x$ and hence
$\phi_k(s(X)) = \PT_k(x)$, the left-tail $p$-value of $X$ under
$\PT_k$.  Also, given observations $\eno X n$, from $R_n(t) =
\sum_{i=1}^n \cf{X_i\in D_t}$, $R_n(X_i)$ is the rank of $X_i$.

In each simulation, we draw iid samples $\eno X n$ from a
mixture distribution $(1-\pa)\sum_{k=1}^L \pt_k \PT_k(x)+\pa \PA(x)$,
where $G\not=\eno F L$.  To test nulls
\begin{align*}
  H_i: X_i \sim \PT_k \text{ for some } k, \quad i=1,\ldots,n,
\end{align*}
we compute four types of $p$-values:
\begin{itemize}
\item[1)] $\pseq i = M_n(X_i)$ defined by
  \eqref{eq:lp-sequential-pval} and \eqref{eq:constraint-seq}, where
  ``seq'' in the subscript stands for  ``sequential'', indicating that
  as the calculation of $M_n(X_i)$ precedes to smaller $X_i$, linear
  constraints are added sequentially;
\item[2)] $\pglb i = M_n(X_i)$ defined by \eqref{eq:constraint-global}
  and \eqref{eq:lp-global-pval}, where ``glb'' in the subscript stands
  for ``global'', indicating that $M_n(X_i)$ are calculated under
  linear constraints imposed by all $\eno X n$;
\item[3)] $\pmax i = \max_k\PT_k(X_i)$;
\item[4)] $\pmix i = \sum_k \pt_k \PT_k(X_i)$, i.e., the $p$-value of
  $X_i$ when the values of $\pt_1$, \ldots $\pt_L$ are known.
\end{itemize}

The computation of $\pseq i$ and $\pglb i$ is done by linear
programming.  By \eqref{eq:lp-sequential-pval} and
\eqref{eq:lp-global-pval}, both are maxima of $\vfc\tp\vf\PT(X_i) =
c_1 \PT_1(X_i)+\cdots+c_L\PT_L(X_i)$.  In the simulations, the
constraints are a little different from those basic ones given in
\eqref{eq:constraint-seq} and \eqref{eq:constraint-global}.  However,
the analysis is the same.

Denote by $\bar\Gamma^*(z; \alpha, \beta)$ the $z$-th upper-tail
quantile of the Gamma distribution with shape parameter $\alpha$ and
scale parameter $\beta$.  For $i=1,\ldots,n$, to compute $\pseq i$,
the constraints on $\eno c L$ are
\begin{itemize}
\item[1)] $c_k\ge 0$, $\sum c_k \le 1$;
\item[2)] $\vfc\tp\vf\PT(X_j)\le u(X_j)$ for $X_j\ge X_i$, where
  \begin{align*}
    u(X_j)
    =
    \begin{cases}
      \displaystyle
      \nth n
      \bar\Gamma^*\Grp{\nth n; R_n(X_i), \nth{0.95}},
      & \text{if } R_n(X_j) \le n^{0.2}, \\[.5ex]
      \EP_n(X_j) + \epsilon_n & \text{otherwise};
    \end{cases}
  \end{align*}
\item[3)] $\EP_n(t_2) - \EP_n(t_1)\ge \vfc\tp [\vf\PT(t_2) -
  \vf\PT(t_1)] + \epsilon_n$ for $t_1, t_2 \in \CT_n$ with $X_i\le
  t_1<t_2$, where $\EP_n(t) = R_n(t)/n$.
\end{itemize}
In all the simulations, $\epsilon_n = \sqrt{\ln n/n}$ and $\CT_n$
consists of $\lfloor (\ln n)^2\rfloor$ equally spaced points with the
first and last ones being $\min X_i$ and $\max X_i$.

The only difference between the above constraints and those in
\eqref{eq:constraint-seq} is the modified upper bound $u(X_j)$ when
$R_n(X_j)\le n^{0.2}$.  This aims to impose stronger constraint on
$c_k$.  In the definition of $u(X_j)$, $n^{0.2}$ can be changed to any
$a_n=o(n)$ and the scale parameter $1/0.95$ to any $1/\beta$ with
$\beta\in (0,1)$.  As Appendix \ref{sec:append-numeric} shows, at
control parameter $\alpha$, Procedure \ref{proc:finite} using $\pseq
i$ computed under the above constraints obtains
\begin{align} \label{eq:lp-seq-modify}
  \fdr \le \alpha + r_n + E\Sbr{\frac{\cf{R>0}}{R\vee 1}}.
\end{align}
with $r_n \to 0$ as $n\toi$.

With similar modifications to \eqref{eq:constraint-global}, for
$i=1,\ldots, n$, to compute $\pglb i$, the constrains on $\eno c L$
are
\begin{itemize}
\item[1)] $c_k\ge 0$, $\sum c_k \le 1$;
\item[2)] $\vfc\tp\vf\PT(X_j)\le u(X_j)$ for all $X_j\ge X_i$; and 
\item[3)] $\EP_n(t_2) - \EP_n(t_1)\ge \vfc\tp [\vf\PT(t_2) -
  \vf\PT(t_1)] + \epsilon_n$ for all $t_1, t_2 \in \CT_n$ with
  $t_1<t_2$.
\end{itemize}

We then apply the BH procedure to the above $p$-values, specifically,
Procedure \ref{proc:finite} to $\pseq i$, Procedure \ref{proc:finite2}
to $\pglb i$, Procedure \ref{proc:equivalent} to $\pmax i$, and the BH
procedure to $\pmix i$.  For each set of $\eno\PT L$ and $\PA$, we
draw $1000$ iid samples of $\eno X n$ with $n=5000$.  In this case,
$r_n \le 9.7\NE{-3}$ in \eqref{eq:lp-seq-modify}; see Appendix 
\ref{sec:append-numeric}.  The power, FDR and pFDR of each procedure
are calculated by averaging over the samples.  Throughout, $\pa =
0.05$.

All the simulations are conducted in R language \cite{R}; $\pseq i$
and $\pglb i$ are computed by the R linear programming package
\texttt{glpk}.

\subsection{Results}
We conduct 5 groups of simulations.  The parameters of the simulations
are shown in Table \ref{table:distributions}.

\begin{table}[ht]
  \renewcommand{\arraystretch}{1.2}
  \begin{center}
    \begin{tabular}{|c|c|c|c|}
      \hline
      & $\eno\PT L$ & $\eno\pt L$ & $\PA$ \\\hline
      1
      & $N(0,1)$, $N(-1,1)$,  $N(-2,1)$
      & .75, .15, .1 
      & $N(-4,1)$\\\hline
      2
      &  $t_{20}$, $t_{20,-1}$, $t_{20,-2}$
      & .75, .15, .1
      & $t_{20,-4}$ \\\hline
      3
      & $N(0,1)$, $N(-1,1)$, $N(-2,1)$
      & .6, .25, .15
      & $N(-4,1)$ \\\hline
      4
      & $N(0, 1)$, $N(-1,1.5)$, $N(-2,1.5)$
      & .75, .15, .1 
      & $N(-4,1)$ \\\hline
      5
      & $N(\mu,1)$, $\mu=0,-1,-2,-3,-4$
      & .65, .15, .1, .05, .05
      & $N(-5,1)$ \\\hline
    \end{tabular}
  \end{center}
  \caption{\rm Parameters for the simulations.  $\PT_k$ are null
    distributions, $\pt_k$ their prior probabilities, and $\PA$ the
    distribution under false nulls.  In each simulation, $\pa=0.05$.
    $t_{n,c}$ denotes the noncentral $t$ distribution with $n$ df and
    noncentrality $c$.
    \label{table:distributions}
  }
\end{table}

The results of the simulations are summarized in Table
\ref{table:FDR-power}.  In all the simulations, the control parameter
$\alpha$ is equal to 0.25.  As expected, because $\pmix i$
incorporate $\eno \pt L$, which is information not accessible by the
other types of $p$-values, they yield the highest power with
substantial margin. On the other hand, $\pseq i$ and $\pglb i$ yield
substantially higher power than $\pmax i$.  This shows that even when
$\eno\pt L$ are unknown, by utilizing properties of empirical
processes to reduce overestimation of $p$-values, the power of the BH
procedure can still be significantly improved.

\begin{table}[ht]
  \renewcommand{\arraystretch}{1.2}
  \begin{center}
    \begin{tabular*}{.94\textwidth}{@{\extracolsep\fill}|c|ccc|ccc|}
      \hline
      simul.
      & \multicolumn{3}{c|}{\makebox[.38\textwidth]{1}}
      & \multicolumn{3}{c|}{\makebox[.38\textwidth]{2}}
      \\
      & power & FDR & pFDR & power & FDR & pFDR \\\hline
      $\pseq i$
      & .495 & 8.61$\NE{-2}$ & 8.61$\NE{-2}$
      & .236 & 8.57$\NE{-2}$ & 8.57$\NE{-2}$
      \\
      $\pglb i$
      & .494 & 8.60$\NE{-2}$ & 8.60$\NE{-2}$
      & .235 & 8.57$\NE{-2}$ & 8.57$\NE{-2}$
      \\
      $\pmax i$
      & .223   & 2.55$\NE{-2}$ & 2.55$\NE{-2}$ 
      & .035 & 2.87$\NE{-2}$ & 3.18$\NE{-2}$
      \\
      $\pmix i$
      & .770 & .238 & .238
      & .634 & .238 & .238
      \\\hline
    \end{tabular*}
    \\
    \begin{tabular*}{.94\textwidth}{@{\extracolsep\fill}|c|ccc|ccc|}
      \hline
      simul.
      & \multicolumn{3}{c|}{\makebox[.38\textwidth]{3}}
      & \multicolumn{3}{c|}{\makebox[.38\textwidth]{4}}
      \\
      & power & FDR & pFDR & power & FDR & pFDR \\\hline
      $\pseq i$
      & .449 & .103 & .103
      & 4.82$\NE{-4}$ & 6.95$\NE{-2}$ & .465
      \\
      $\pglb i$
      & .449 & .102 & .102
      & 4.82$\NE{-4}$ & 6.95$\NE{-2}$ & .465
      \\
      $\pmax i$
      & .229 & 3.77$\NE{-2}$ & 3.77$\NE{-2}$ 
      & 8.42$\NE{-5}$ & 1.88$\NE{-2}$ & .523 
      \\
      $\pmix i$
      & .685 & .236 & .236
      & .144 & .226 & .259\\\hline
    \end{tabular*}
    \\
    \begin{tabular*}{.56\textwidth}{@{\extracolsep\fill}|c|ccc|}
      \hline
      simul.
      & \multicolumn{3}{c|}{\makebox[.38\textwidth]{5}}
      \\
      & power & FDR & pFDR \\\hline
      $\pseq i$
      & 4.53$\NE{-2}$ & 6.42$\NE{-2}$ & 6.85$\NE{-2}$
      \\
      $\pglb i$
      & 4.62$\NE{-2}$ & 6.51$\NE{-2}$ & 6.94$\NE{-2}$
      \\
      $\pmax i$
      & 3.22$\NE{-3}$ & 1.68$\NE{-2}$ & 4.00$\NE{-2}$ 
      \\
      $\pmix i$
      & .448 & .239 & .239
      \\\hline
    \end{tabular*}
  \end{center}
  \caption{\rm 
    Performance of the BH procedure applied to different types of
    $p$-values in simulations 1--5.  In each simulation, the  control
    parameter is set at $\alpha = 0.25$.
    \label{table:FDR-power}
  }
\end{table}

In agreement with known results \cite{benjamini:hoc:95,
  storey:tay:sie:04}, the FDR attained by using $\pmix i$ or $\pmax i$
is close to or lower than $(1-\pa)\alpha = 0.2375$.  However, the
large gap between the FDR by using $\pmax i$ and $(1-\pa)\alpha$
indicates that testing based on $\pmax i$ can be very
conservative.  On the other hand, in all the simulations, the FDR
attained by using $\pseq i$ or $\pglb i$ lies between the above two,
substantially lower than the first one but substantially higher than
the second.  Together with the simulation result on power, this shows
that multiple testing based on $\pseq i$ and $\pglb i$ is more
conservative than based on $\pmix i$, but can be much less
conservative than based on $\pmax i$.

The conservativeness of multiple testing based on the $p$-values other
than $\pmix i$ does not necessarily help the control of pFDR.  In
simulations 1 and 3, for each type of $p$-value, the power is
relatively high, implying $P(R\ge 1)\approx 1$.  As a result, the pFDR
is almost identical to the FDR.  In simulations 2, 4 and 5, the power
yielded by $\pmax i$ is low ($\le .05$), and, consistent with this,
the pFDR is substantially higher than the FDR.  In contrast, in
simulations 2 and 5, by using $\pseq i$ or $\pglb i$, the pFDR and FDR
are similar to each other.  The worst case is simulation 4, where the
pFDR is almost twice as high as the control parameter $\alpha = .25$
when $\pseq i$ or $\pglb i$ are used.  Observe that in simulation 4,
negative observations with large absolute values are more likely to be
associated with true nulls than with false nulls.  This explains the
poor control of the pFDR by the BH procedure using $\pseq i$
or $\pglb i$.

\begin{figure}[t]
  \renewcommand{\arraystretch}{0}
  \begin{center}
    \begin{tabular*}{.98\textwidth}{@{\extracolsep\fill} cc}
      Simulation 1 & Simulation 2 \\[2ex]
      \epsfig{file = 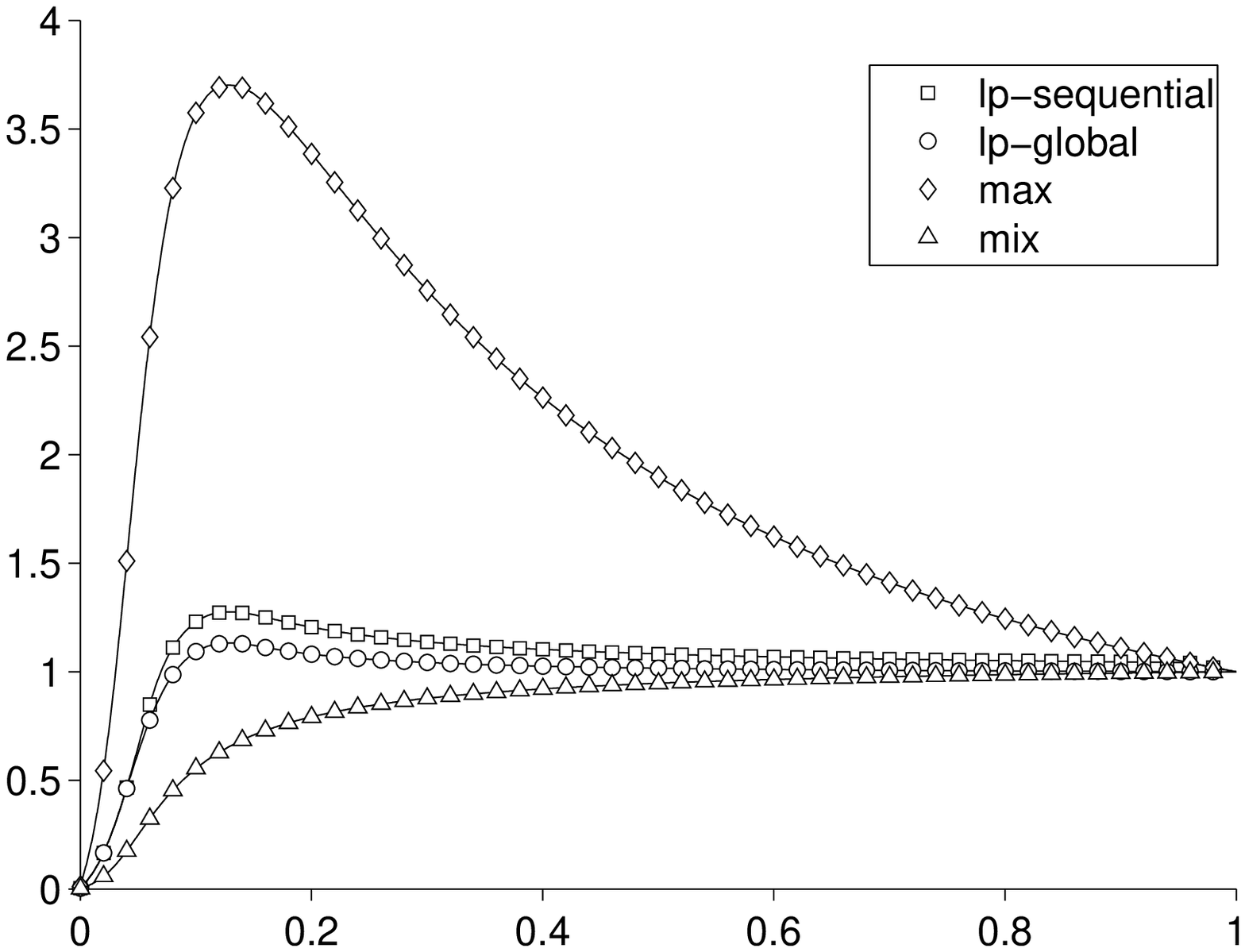,
        width=5.8cm, height=4.7cm} &
      \epsfig{file = 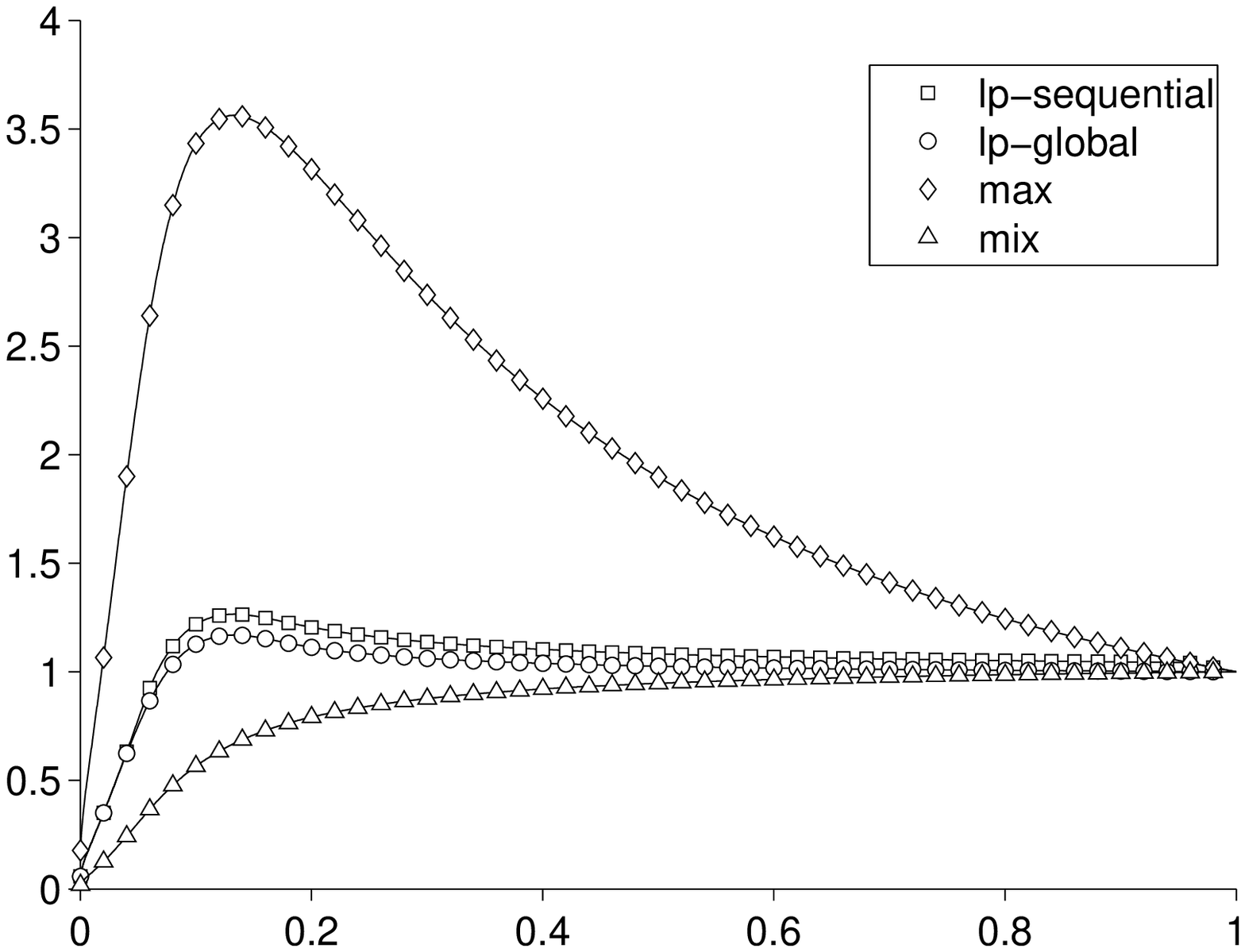,
        width=5.8cm, height=4.7cm} \\[5ex]
      Simulation 3 & Simulation 4 \\[2ex]
      \epsfig{file = 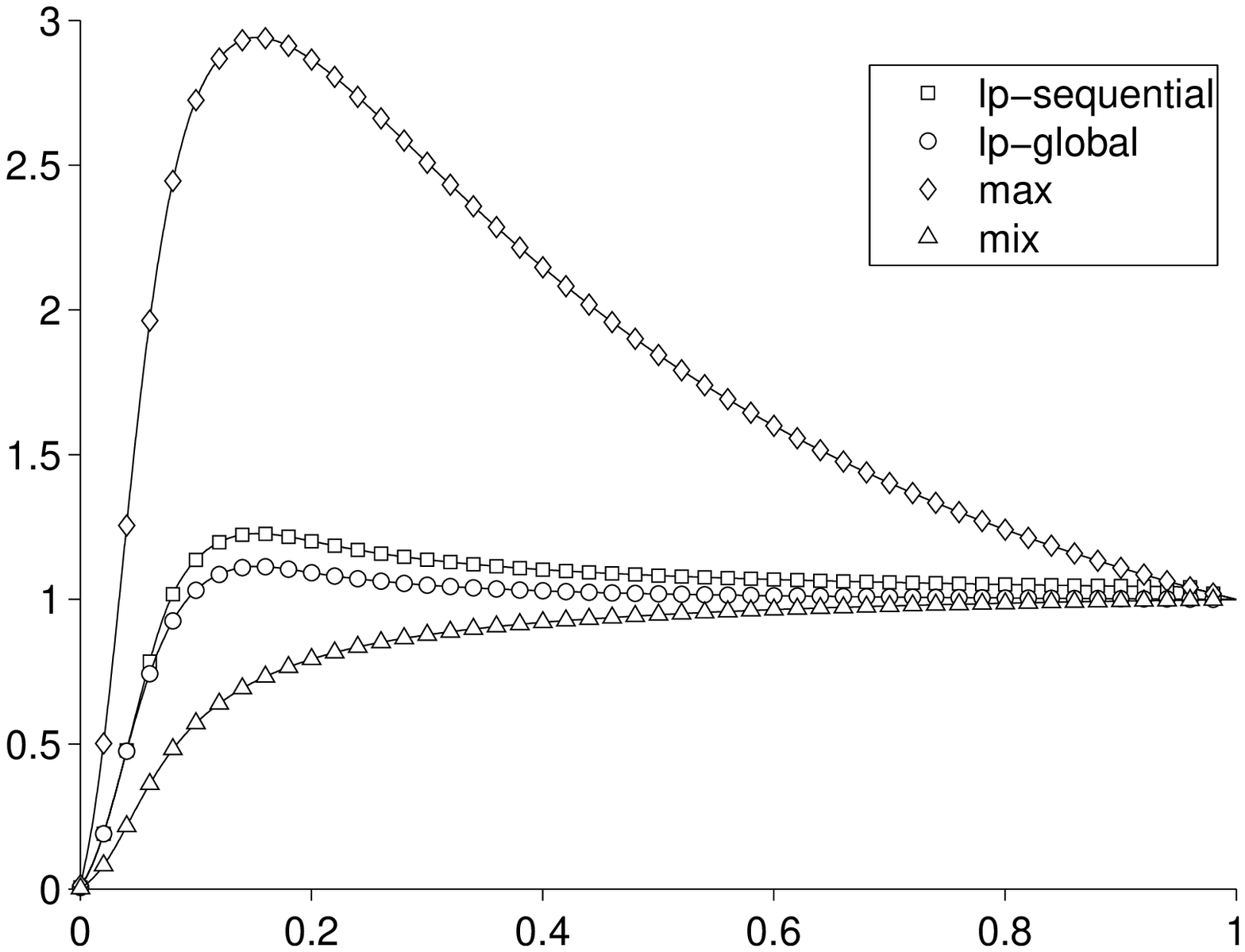,
        width=5.8cm, height=4.7cm} &
      \epsfig{file = 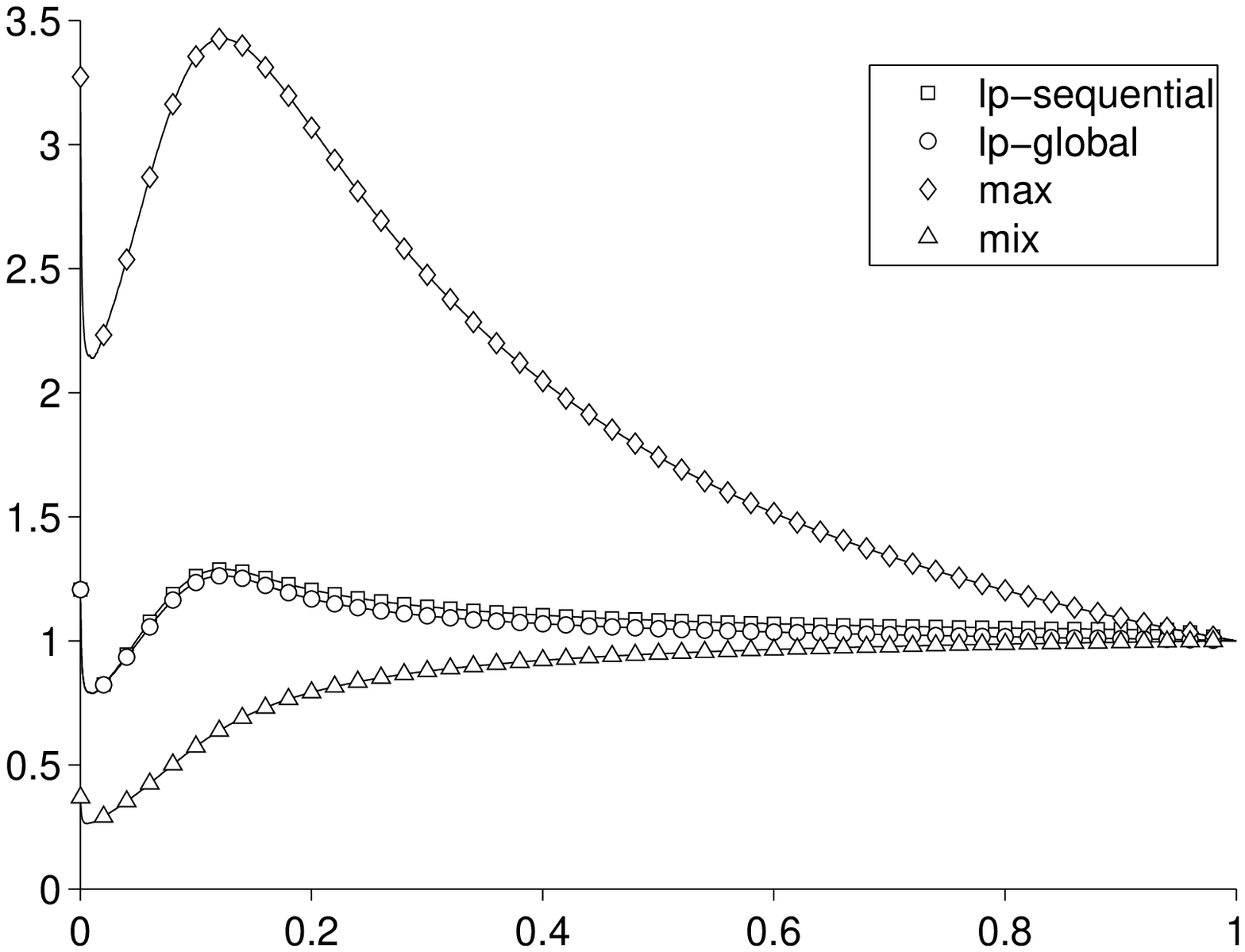,
        width=5.8cm, height=4.7cm} 
    \end{tabular*} \\[5ex]
    Simulation 5 \\[2ex]
    \epsfig{file = 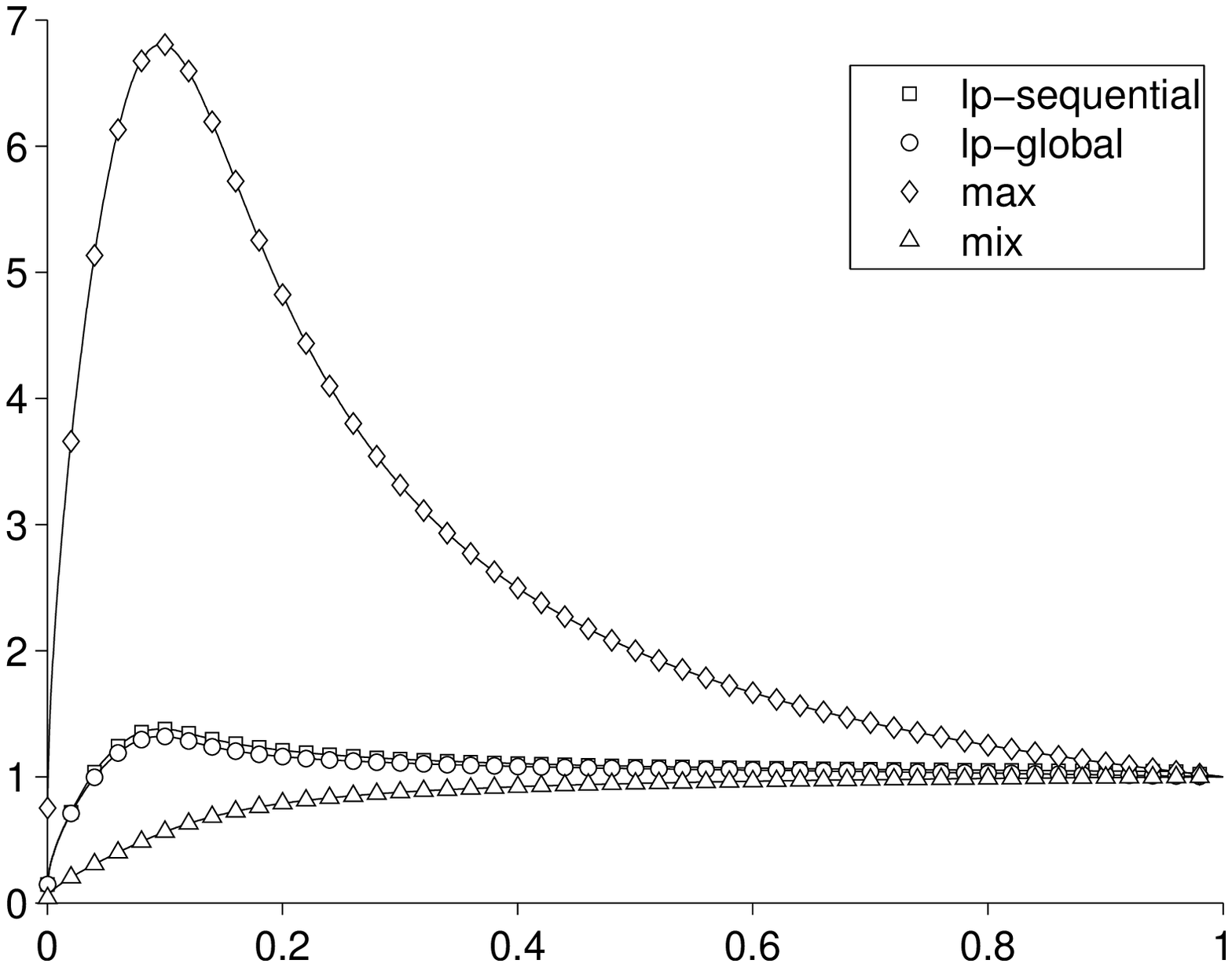,
      width=5.8cm, height=4.7cm} 
  \end{center}
  \caption{\rm\label{fig:pvals}
    Plots of $n\bar p\Sb i/i$ \emph{versus\/} $i/n$ in simulations 1--5
    for different types of $p$-values: $\pseq i$ (``lp-sequential''),
    $\pglb i$ (``lp-global''), $\pmax i$ (``max''), and $\pmix i$ 
    (``mix'').
  }
\end{figure}

To see in more detail why $\pseq i$ and $\pglb i$ in general yield
better multiple testing results than $\pmax i$, we compare the plots
of the $p$-values.  Because all the procedures in the study are
variants of the BH procedure, it is more informative to compare the
plots of $p\Sb{i}/(i/n) = n p\Sb{i}/i$ than to compare those of
$p\Sb{i}$, $i=1,\ldots,n$, where $p\Sb{i}$ is the $i$th smallest
$p$-value of a given type.  Figures \ref{fig:pvals} display the plots
of $n\bar p\Sb{i}/i$ \emph{versus\/} $i/n$ in the simulations, where
$\bar p\Sb{i}$ is the average over the repetitions.  The figure
clearly shows that for small $i/n$, $n\pseq{(i)}/i$ and
$n\pglb{(i)}/i$ are similar to each other, both are substantially
lower than $n\pmax{(i)}/i$, and both increase more rapidly than
$n\pmix{(i)}/i$.  This is consistent with the observation that
multiple testing using $\pseq i$ and that using $\pglb i$ perform
similarly in terms of power, FDR and pFDR, and in general both have
higher power than multiple testing using $\pmax i$ at the same value
of $\alpha$.

\begin{figure}[ht]
  \renewcommand{\arraystretch}{0}
  \begin{center}
    \begin{tabular*}{.98\textwidth}{@{\extracolsep\fill} cc}
      \multicolumn{2}{c}{Simulation 1} \\[2ex]
      \epsfig{file = 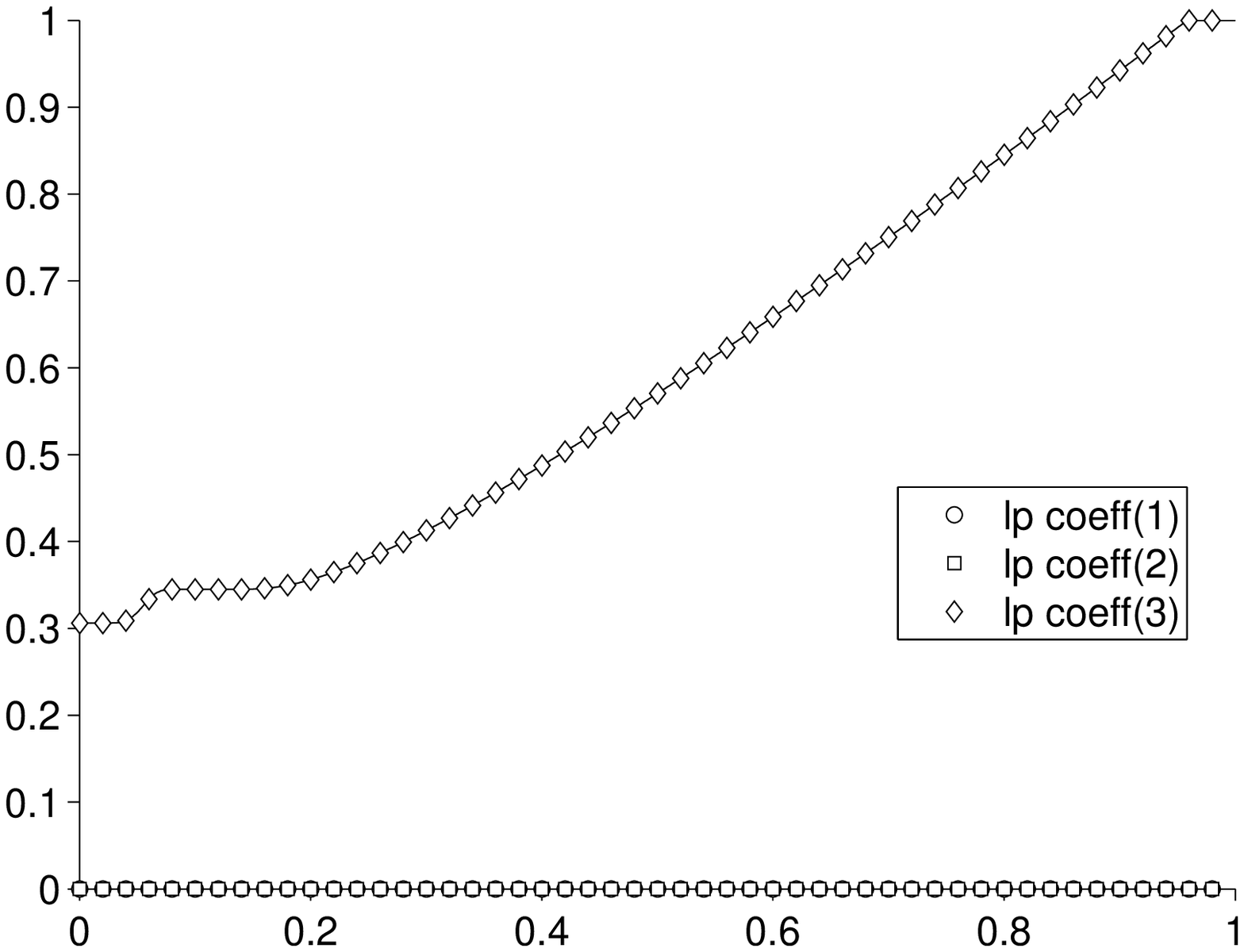,
        width=5.8cm, height=4.7cm} &
      \epsfig{file = 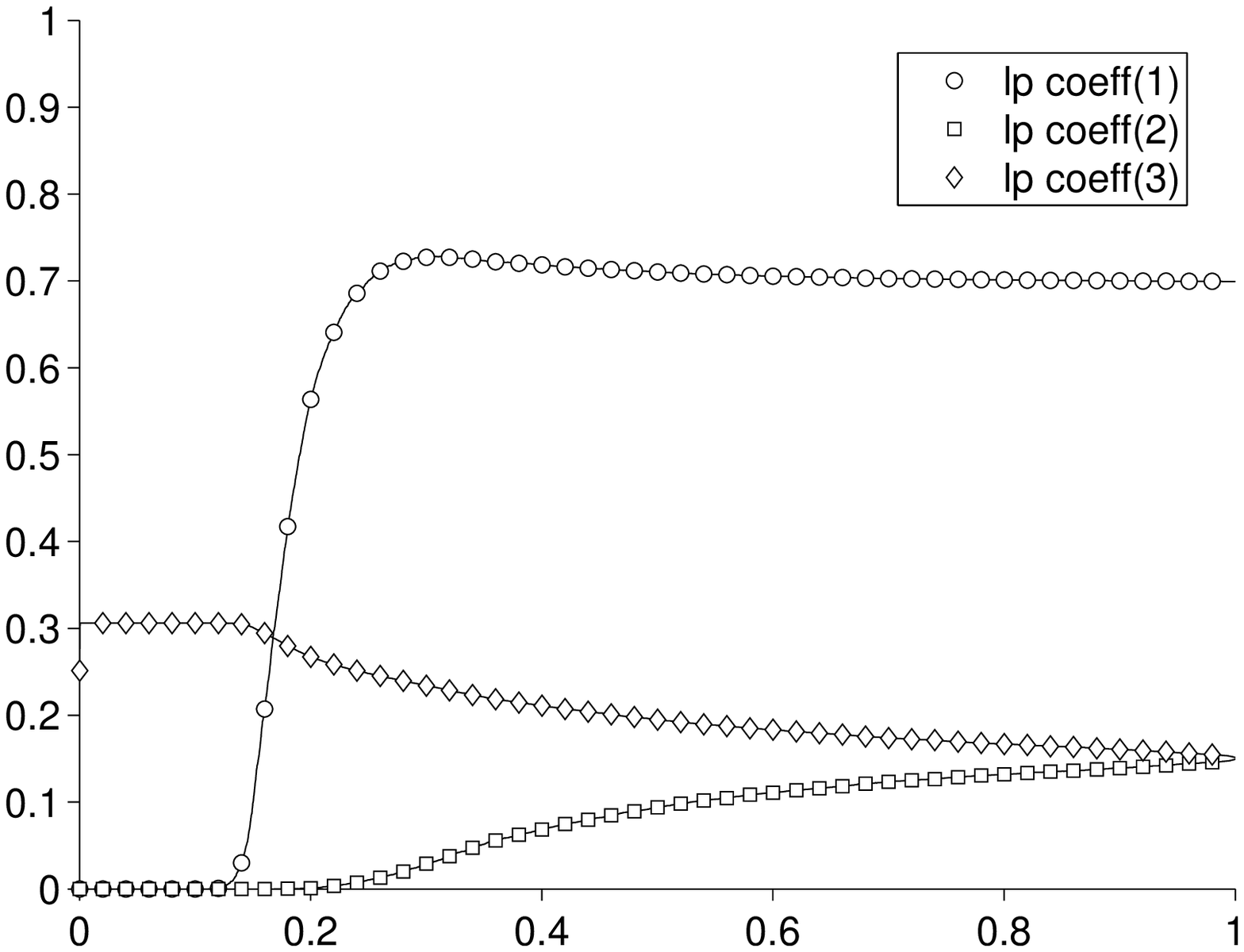,
        width=5.8cm, height=4.7cm} \\[5ex]
      \multicolumn{2}{c}{Simulation 5} \\[2ex]
      \epsfig{file = 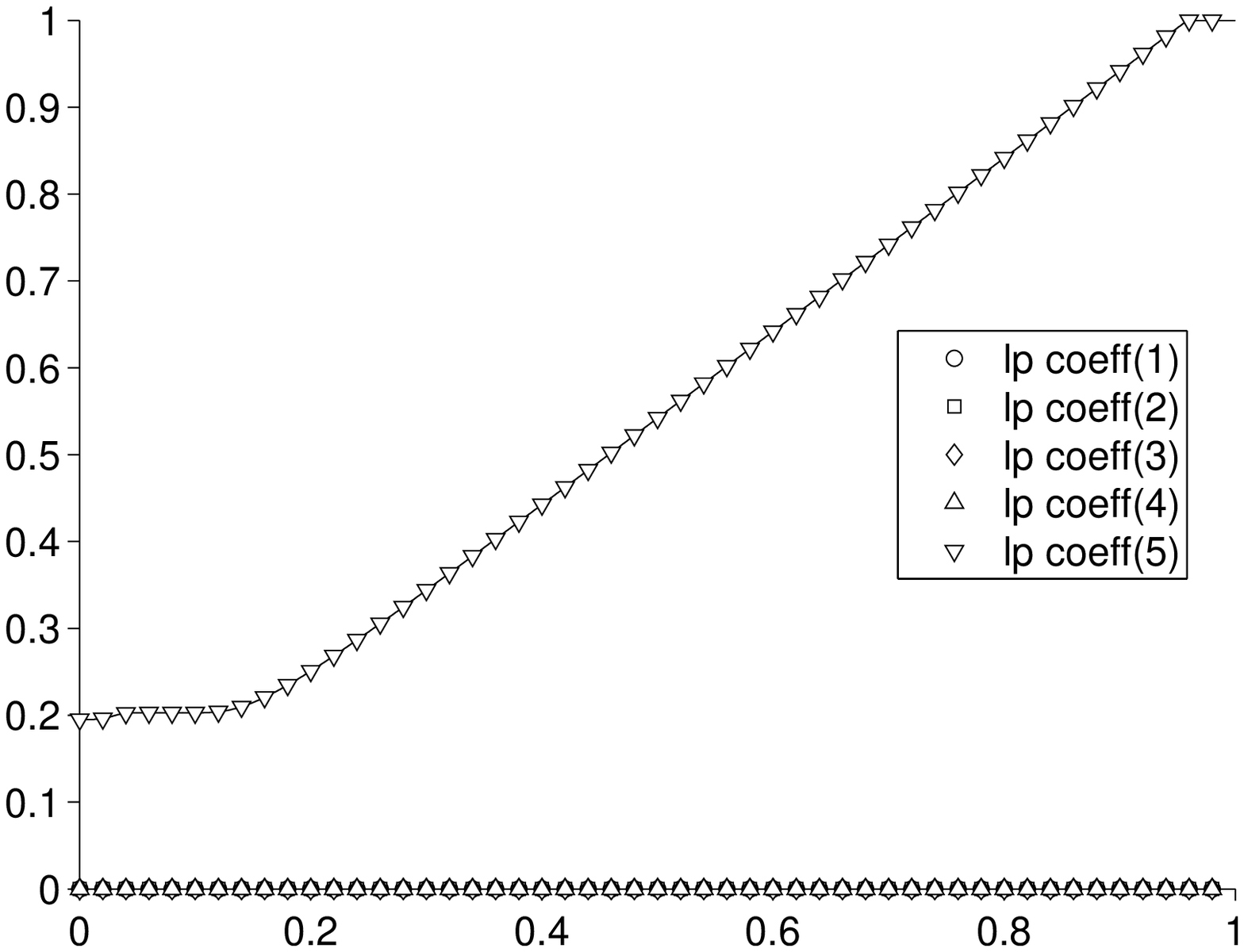,
        width=5.8cm, height=4.7cm} &
      \epsfig{file = 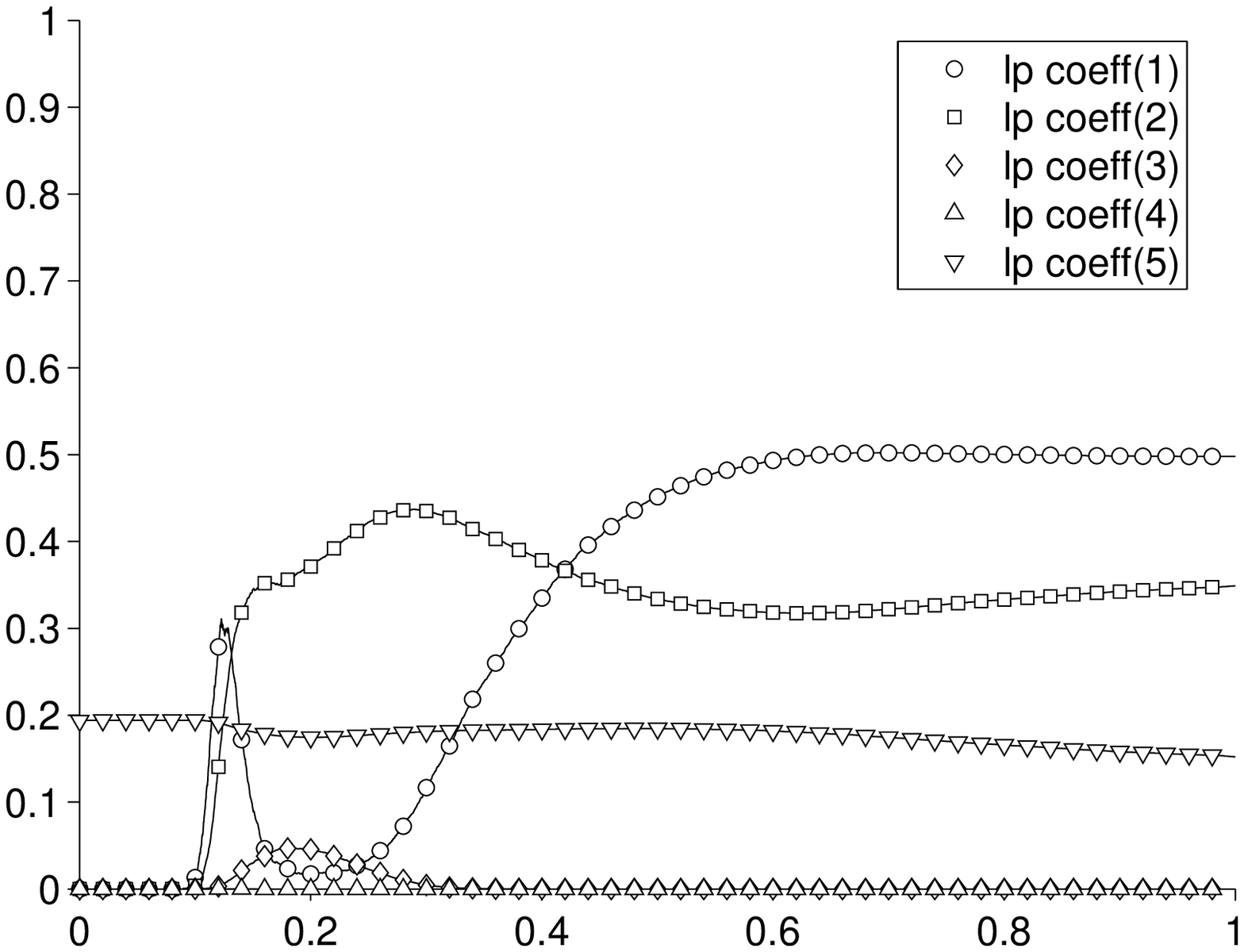,
        width=5.8cm, height=4.7cm} 
    \end{tabular*}
  \end{center}
  \caption{\rm\label{fig:coeff}
    Plots of $c_{k,(i)}$ \emph{versus\/} $i/n$, $k=1,\ldots, n$ in
    simulations 1 and 5, where $c_{1,(i)}, \ldots, c_{L,(i)}$ are the
    coefficients to attain $\pseq{(i)}$ (left) or $\pglb{(i)}$ (right).
  }
\end{figure}

We next look at how $\pseq i$ and $\pglb i$ are computed by linear
programming.  For each $\pseq i$ or $\pglb i$, denote by $c_{1,i},
\ldots, c_{L,i}$ the values of coefficients that yield $p$-values
under the corresponding constraints.  After the $p$-values are sorted,
let $c_{k,(i)}$ be the values corresponding to $\pseq{(i)}$ or
$\pglb{(i)}$.  We plot $c_{k,(i)}$ \emph{versus\/} $i/n$ for
$k=1,\ldots, L$.  Figure \ref{fig:coeff} shows the plots for
simulations 1 and 5.  The plots for the other simulations are
qualitatively similar.  As can be seen, although $\pseq i$ and $\pglb
i$ in the simulations are similar, this is not the case for the
corresponding coefficients $c_{k,i}$.  For each $k$, when $i/n$ is
small, $c_{k,(i)}$ for the two types of $p$-values are similar.
However, as $i/n$ increases, to compute $\pseq i$, essentially only
one $c_k$ stays nonzero.  In all the simulations, this unique $c_k$ is
associated with the last null distribution of the null, i.e., $\PT_L$,
which also has the smallest sup-norm distance from $\PA$ among all
$\PT_k$.  In contrast, to compute $\pglb i$, more complicated
combinations of $\eno c L$ are picked.  This difference between the
coefficients for $\pseq i$ and $\pglb i$ may be partially attributed
to how linear programming is implemented by the package used.
However, it also indicates linear programming may not yield consistent
estimation of $\eno c L$.

Note that in Figure \ref{fig:coeff}, for small $i/n$, the sum of
$c_{k,(i)}$ is quite smaller than 0.4.  Since $\pa=1-\sum c_k$, this
would imply the fraction of false nulls could be as high as 0.6, which
is improbable in many cases.  This raises the possibility that, by
imposing some constraint on the sum of $c_k$, the power may be
improved.  Recall that $\pa=0.05$ in the simulation study.  We
simulate the scenario where it is known that $\pa\le 0.1$.  For both
$\pseq i$ and $\pglb i$, the first constraint on $\eno c L$ is
expanded to become
\begin{itemize}
\item[1')] $c_k\ge 0$, $0.9\le \sum c_k \le 1$.
\end{itemize}

\begin{table}[ht]
  \renewcommand{\arraystretch}{1.2}
  \begin{center}
    \begin{tabular*}{.94\textwidth}{@{\extracolsep\fill}|c|ccc|ccc|}
      \hline
      simul.
      & \multicolumn{3}{c|}{\makebox[.38\textwidth]{1}}
      & \multicolumn{3}{c|}{\makebox[.38\textwidth]{2}}
      \\
      & power & SD$(\frac{R-V}{n-N})$ & pFDR
      & power & SD$(\frac{R-V}{n-N})$ & pFDR \\\hline
      $\pseq i$
      & .495 & 5.20$\NE{-2}$ & 8.61$\NE{-2}$
      & .236 & 6.42$\NE{-2}$ & 8.57$\NE{-2}$
      \\
      $\pglb i$
      & .494 & 5.20$\NE{-2}$ & 8.60$\NE{-2}$
      & .235 & 6.42$\NE{-2}$ & 8.57$\NE{-2}$
      \\
      $\pseq i'$
      & .541 & 5.25$\NE{-2}$ & .103
      & .296 & 6.83$\NE{-2}$ &  9.94$\NE{-2}$
      \\
      $\pglb i'$
      & .541 & 5.25$\NE{-2}$ & .103
      & .296 & 6.83$\NE{-2}$ &  9.94$\NE{-2}$
      \\\hline
    \end{tabular*}
    \\
    \begin{tabular*}{.94\textwidth}{@{\extracolsep\fill}|c|ccc|ccc|}
      \hline
      simul.
      & \multicolumn{3}{c|}{\makebox[.38\textwidth]{3}}
      & \multicolumn{3}{c|}{\makebox[.38\textwidth]{4}}
      \\
      & power & SD$(\frac{R-V}{n-N})$ & pFDR
      & power & SD$(\frac{R-V}{n-N})$ & pFDR \\\hline
      $\pseq i$
      & .449 & 5.35$\NE{-2}$ & .103
      & 4.82$\NE{-4}$ & 1.76$\NE{-3}$ & .465
      \\
      $\pglb i$
      & .449 & 5.35$\NE{-2}$ & .102
      & 4.82$\NE{-4}$ & 1.76$\NE{-3}$ & .465
      \\
      $\pseq i'$
      & .473 & 5.38$\NE{-2}$ & .113
      & 6.26$\NE{-4}$ & 1.86$\NE{-3}$ & .479
      \\
      $\pglb i'$
      & .473 & 5.38$\NE{-2}$ & .113
      & 6.26$\NE{-4}$ & 1.86$\NE{-3}$ & .479 \\\hline
    \end{tabular*}
    \\
    \begin{tabular*}{.56\textwidth}{@{\extracolsep\fill}|c|ccc|}
      \hline
      simul.
      & \multicolumn{3}{c|}{\makebox[.38\textwidth]{5}}
      \\
      & power & SD$(\frac{R-V}{n-N})$ & pFDR\\\hline
      $\pseq i$
      & 4.53$\NE{-2}$ & 3.38$\NE{-2}$ & 6.85$\NE{-2}$
      \\
      $\pglb i$
      & 4.62$\NE{-2}$ & 4.53$\NE{-2}$ & 6.94$\NE{-2}$
      \\
      $\pseq i'$
      & 4.65$\NE{-2}$ & 3.32$\NE{-2}$ & 6.89$\NE{-2}$
      \\
      $\pglb i'$
      & 4.65$\NE{-2}$ & 3.32$\NE{-2}$ & 6.89$\NE{-2}$
      \\\hline
    \end{tabular*}
  \end{center}
  \caption{\rm 
    Performance of the BH procedure applied to $p$-values computed
    under different linear constraints: $\pseq i$ and $\pglb i$ are
    the same as in Table \ref{table:FDR-power}, $\pseq i'$ and
    $\pglb i'$ are computed with the additional constraint $c_1 +
    \cdots + c_L \ge 0.9$.  For each simulation, $\frac{R-V}{n-N}$ is
    the fraction of rejected false nulls among all false nulls in a
    repetition.  The SD is obtained over $1000$ repetitions. 
    \label{table:FDR-power2}
  }
\end{table}

Denote the $p$-values computed with the expanded linear constraints by
$\pseq i'$ and $\pglb i'$, and those computed previously still by
$\pseq i$ and $\pglb i$.  In Table \ref{table:FDR-power2}, the power
and pFDR of the BH procedures when applied to the $p$-values are
compared.  In all the cases, the FDR is substantially lower than
$(1-\pa)\alpha=0.2375$ and hence not shown.  In place of FDR, the
standard deviation of $\frac{R-V}{n-N}$ over 1000 repetitions is
reported.  Recall $R$ is the number of rejections, $V$ that of false
rejections, $n=5000$ is the total number of nulls, and $N$ is number
of true nulls.   In simulations 1--3, there is a small but significant
increase in power by using $\pseq i'$ and $\pglb i'$.  This is not the
case in simulations 4 and 5, where the power is very low for all the 4
types of $p$-values.

\begin{figure}[ht]
  \renewcommand{\arraystretch}{0}
  \begin{center}
    \begin{tabular*}{.98\textwidth}{@{\extracolsep\fill} cc}
      Simulation 1 & Simulation 2 \\[2ex]
      \epsfig{file = 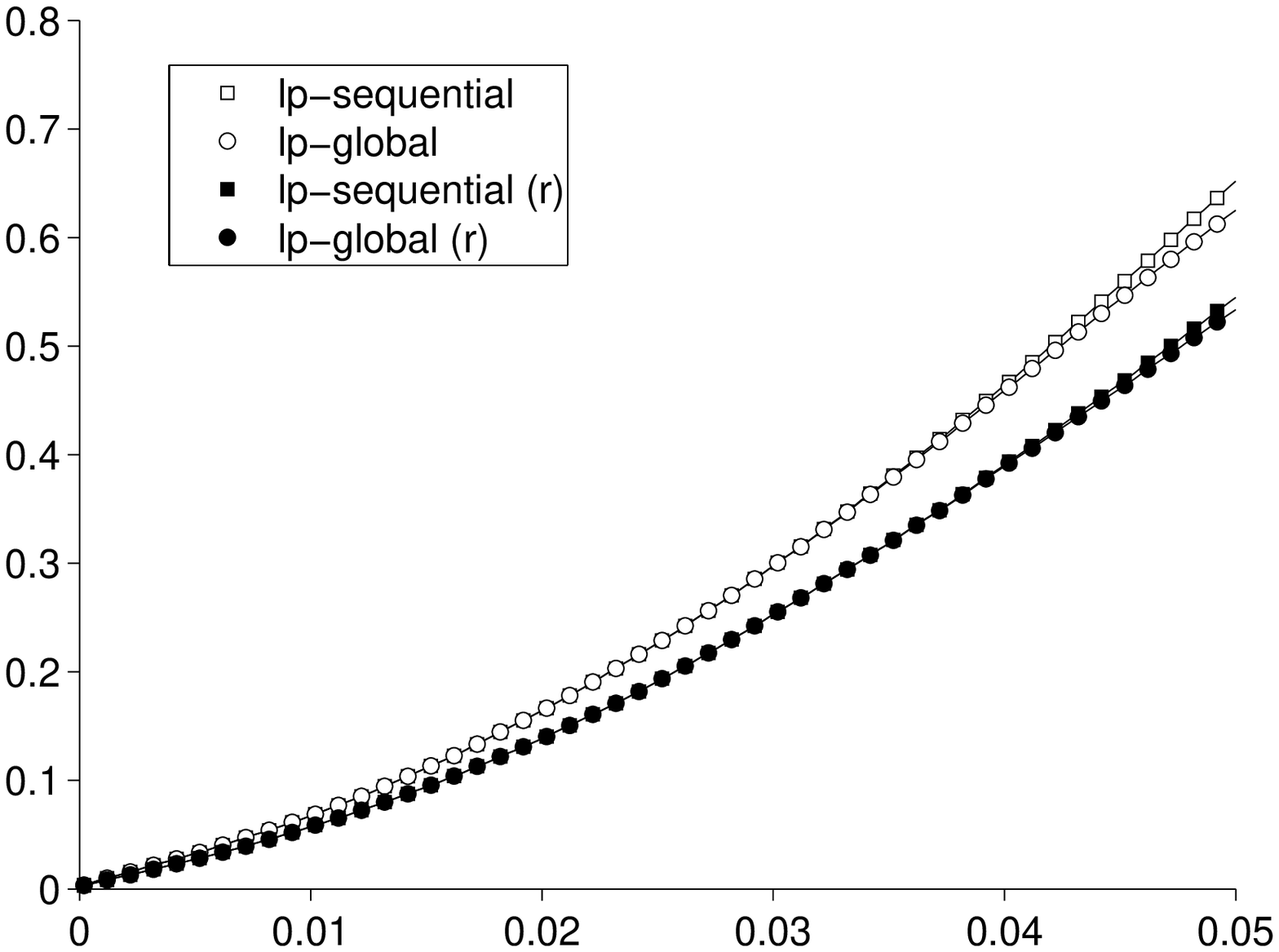, width=5.8cm, height=4.7cm} &
      \epsfig{file = 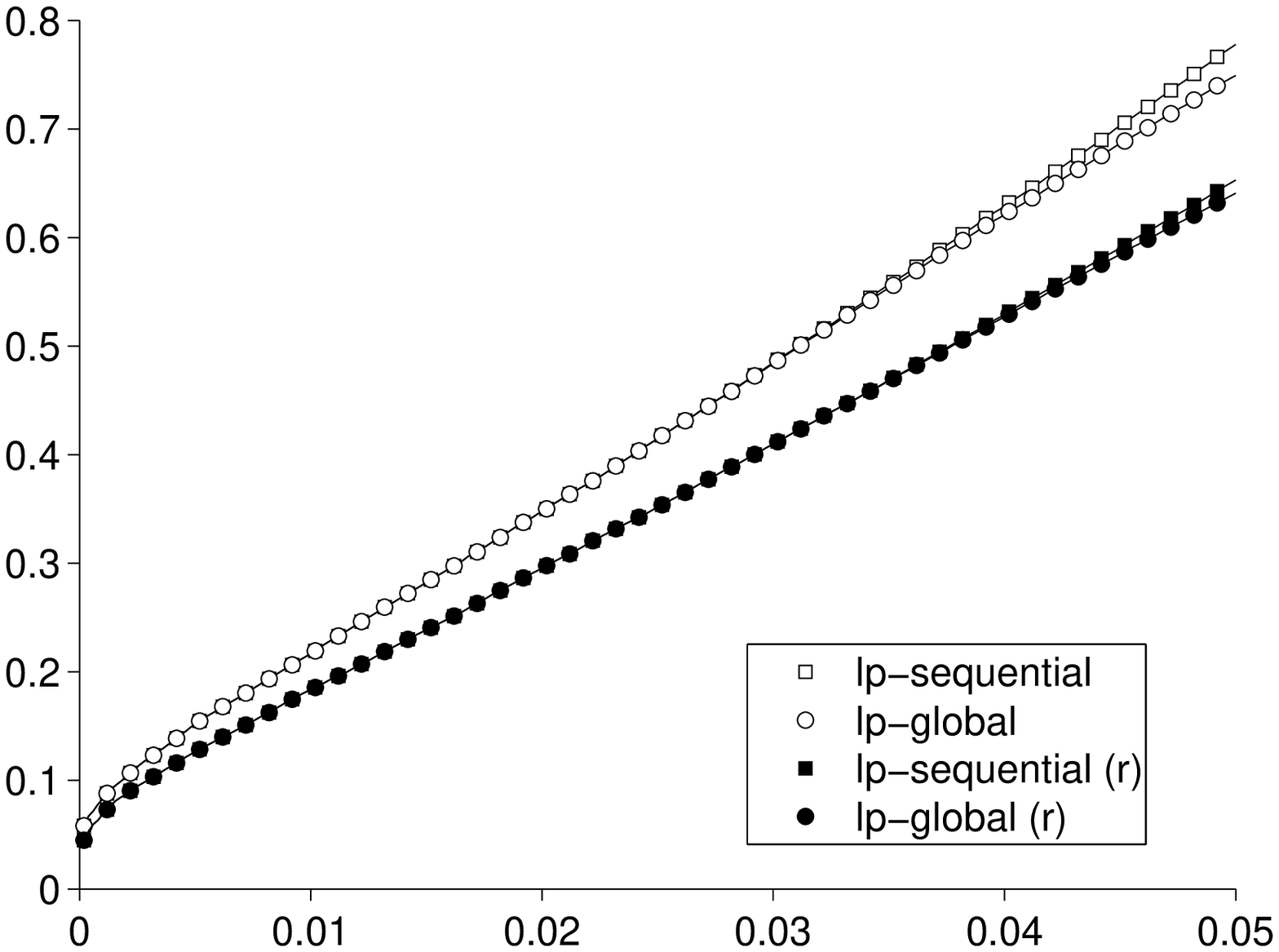, width=5.8cm, height=4.7cm} \\[5ex]
      Simulation 3 & Simulation 4 \\[2ex]
      \epsfig{file = 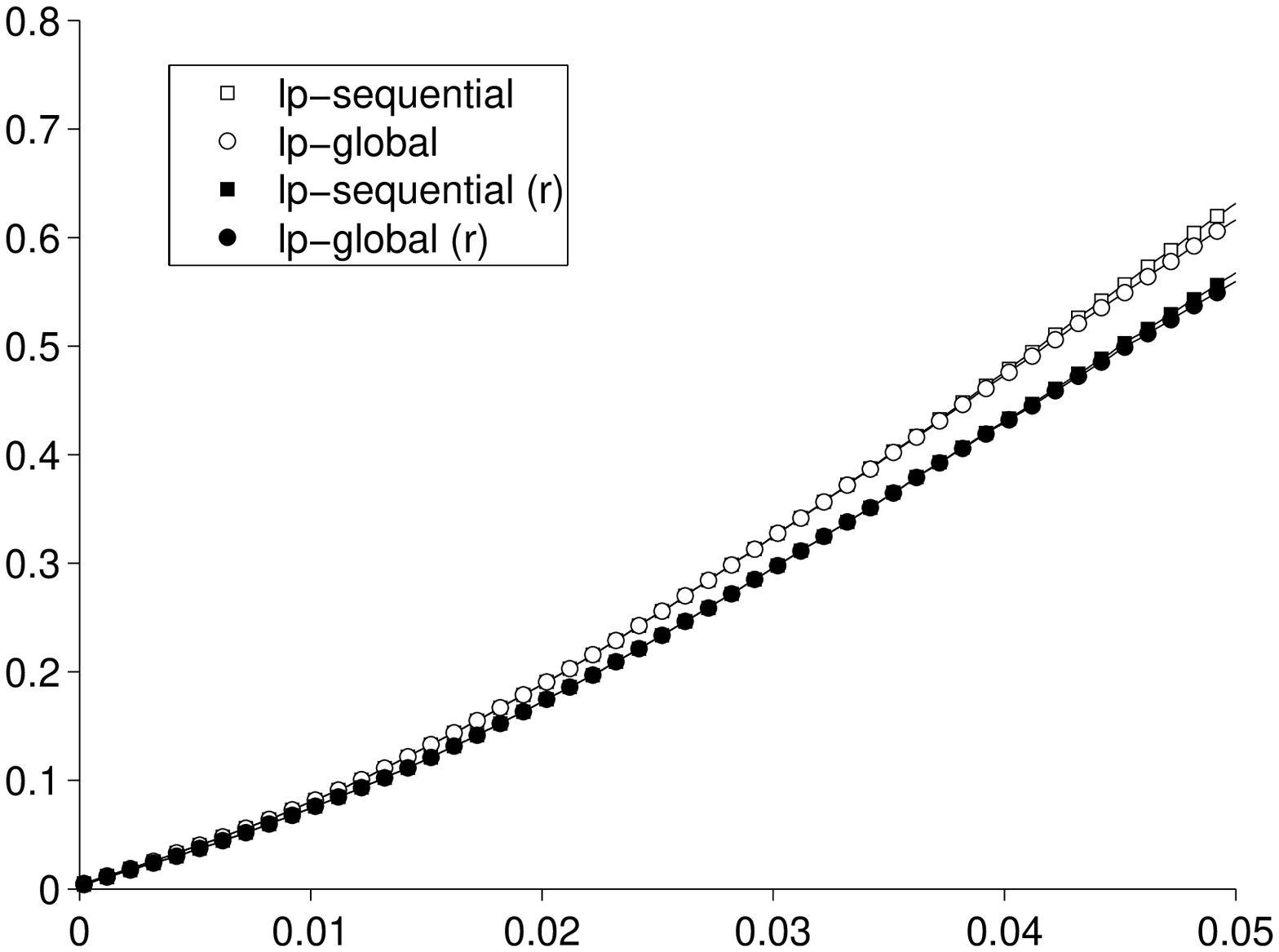, width=5.8cm, height=4.7cm} &
      \epsfig{file = 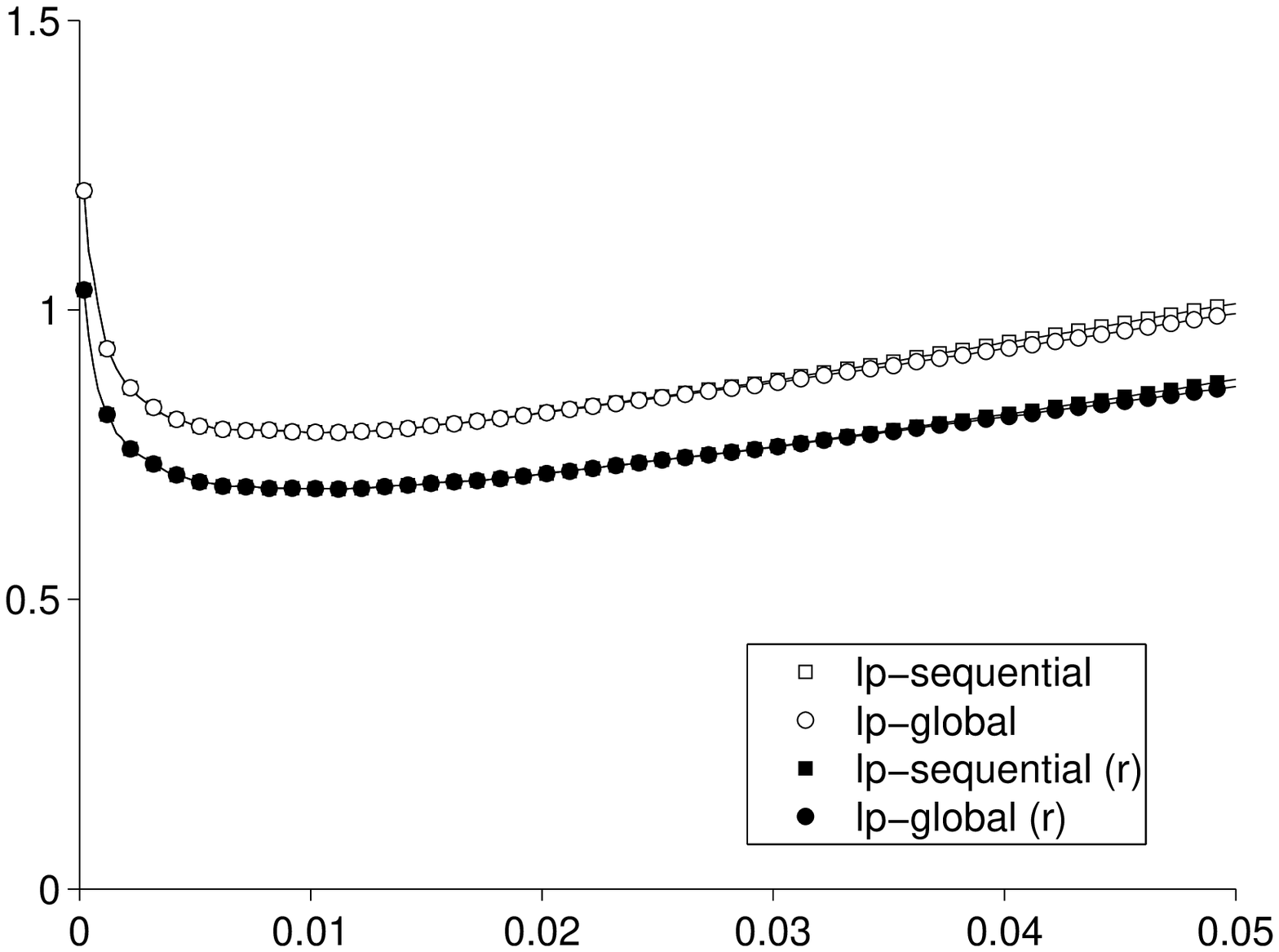, width=5.8cm, height=4.7cm} 
    \end{tabular*} \\[5ex]
    Simulation 5 \\[2ex]
    \epsfig{file = 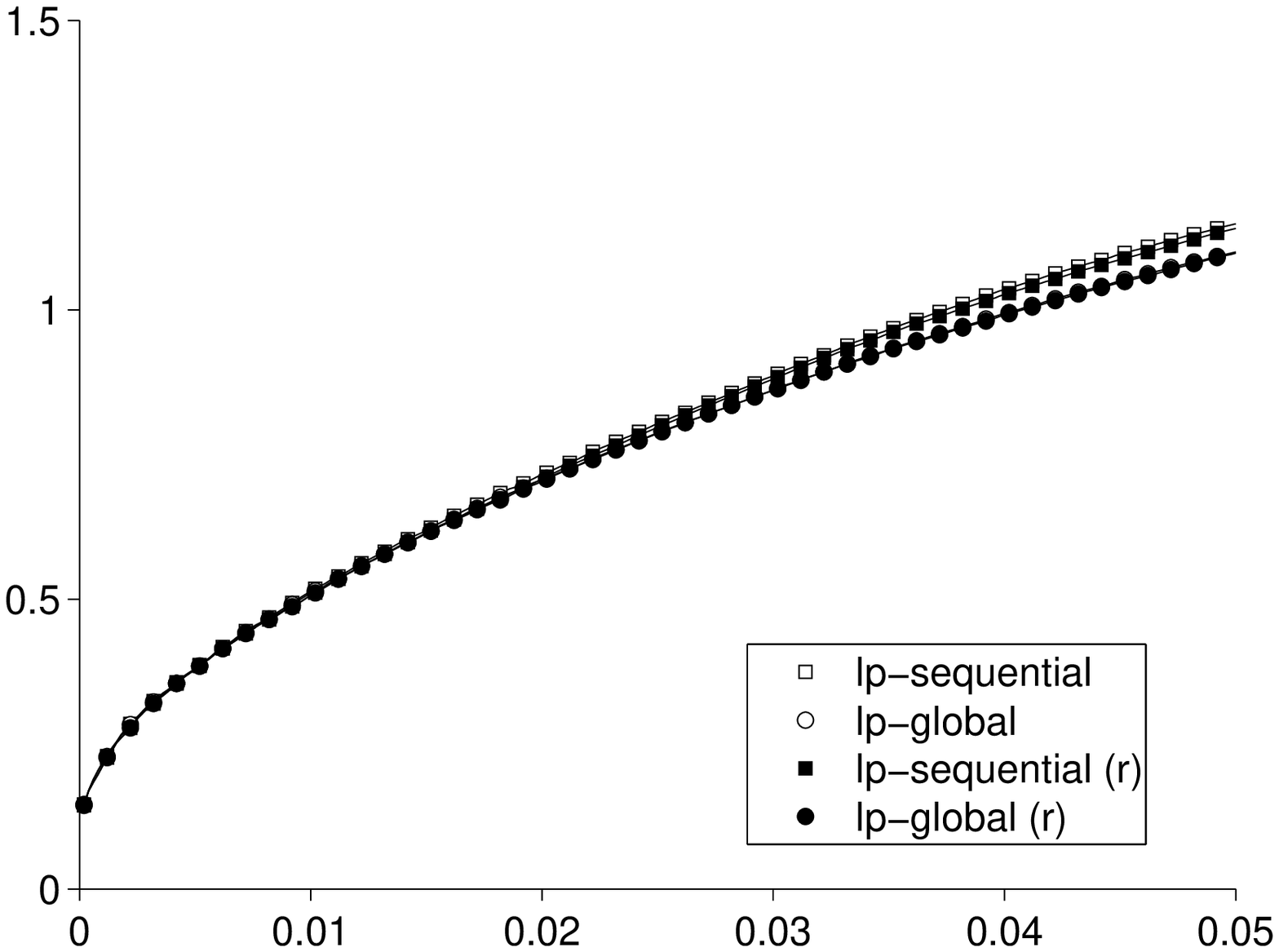, width=5.8cm, height=4.7cm} 
  \end{center}
  \caption{\rm\label{fig:pvals-2}
    Plots of $n\bar p\Sb i/i$ \emph{versus\/} $i/n$ in simulations 1--5,
    with $i/n\le 0.05$.  The plots with open symbols are those of
    $\pseq i$ and $\pglb i$ as in Figure \ref{fig:pvals}.  The plots
    with closed symbols are those of $\pseq i$ and $\pglb i$ computed
    with the extra constraint $c_1+\cdots+c_L\ge 0.9$.
  }
\end{figure}

\begin{figure}[ht]
  \renewcommand{\arraystretch}{0}
  \begin{center}
    \begin{tabular*}{.98\textwidth}{@{\extracolsep\fill} cc}
      \multicolumn{2}{c}{Simulation 1} \\[2ex]
      \epsfig{file = 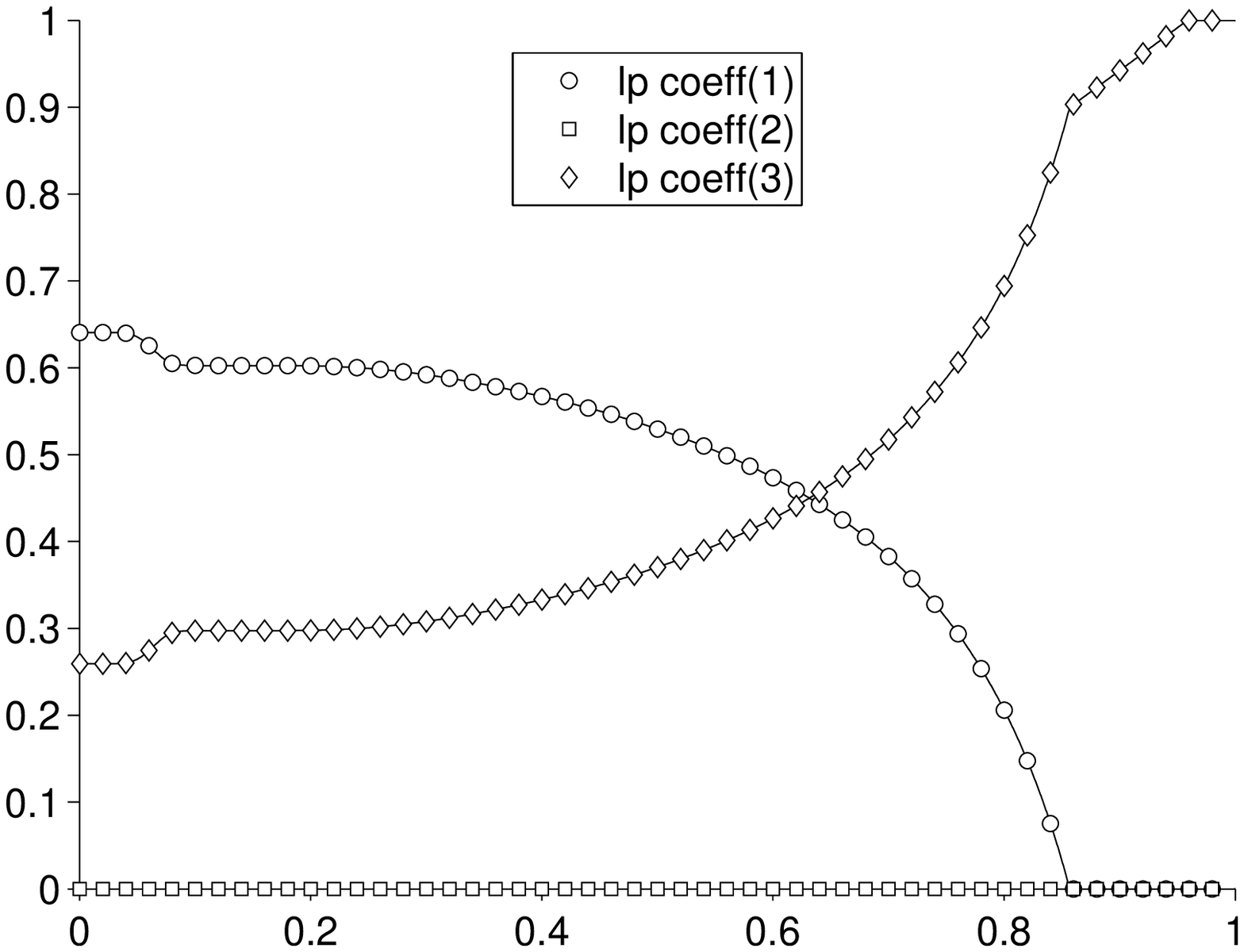,
        width=5.8cm, height=4.7cm} &
      \epsfig{file = 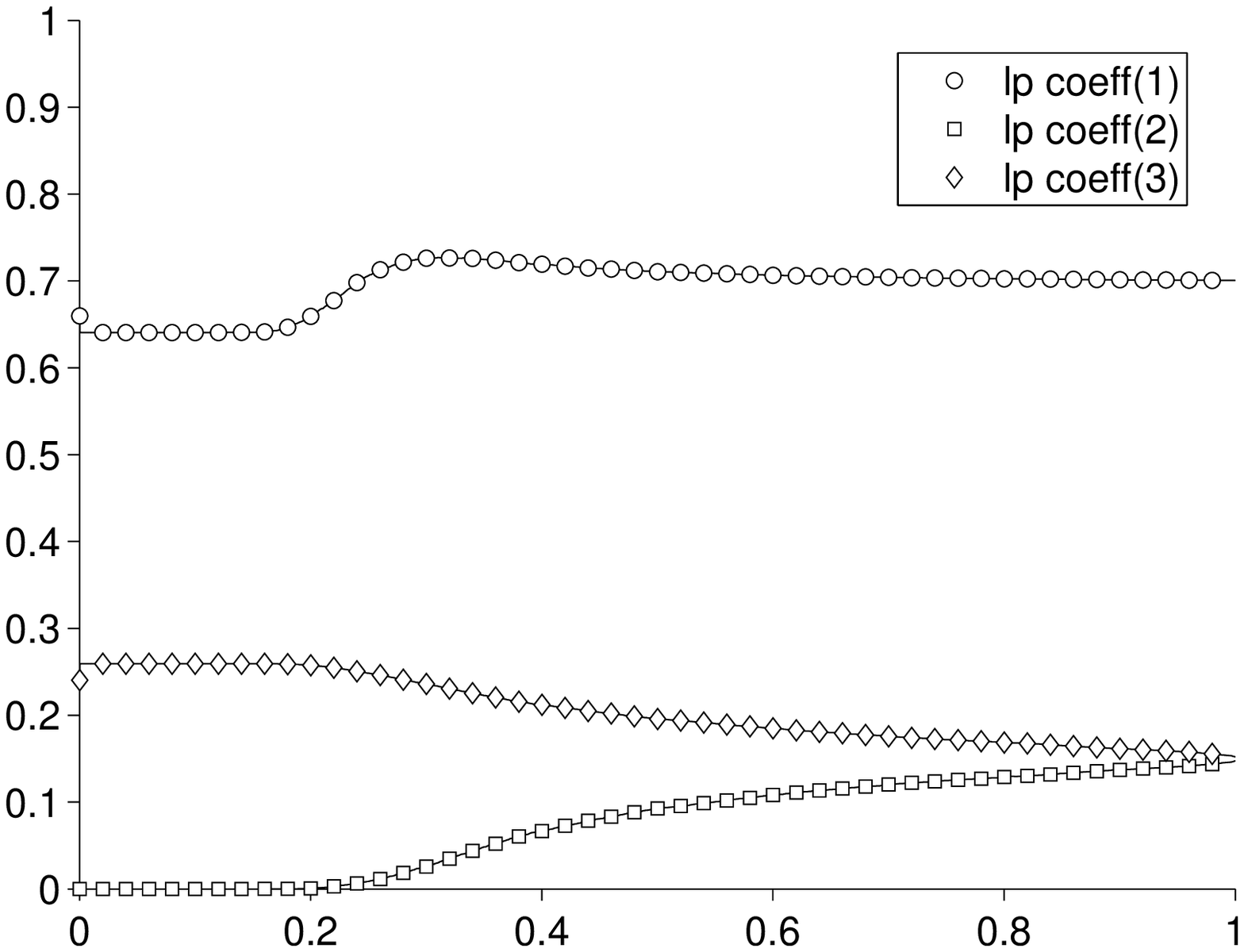,
        width=5.8cm, height=4.7cm} \\[5ex]
      \multicolumn{2}{c}{Simulation 5} \\[2ex]
      \epsfig{file = 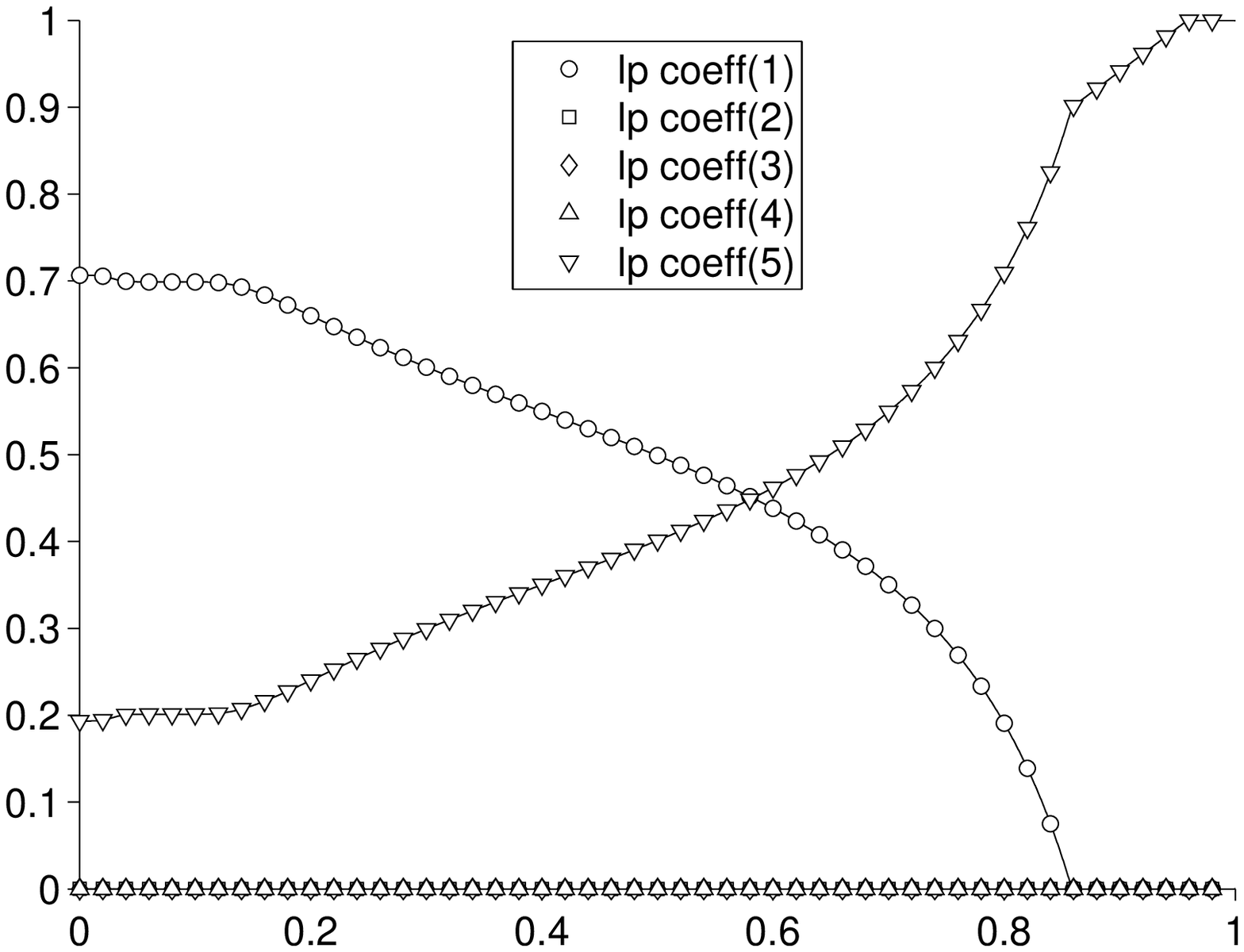,
        width=5.8cm, height=4.7cm} &
      \epsfig{file = 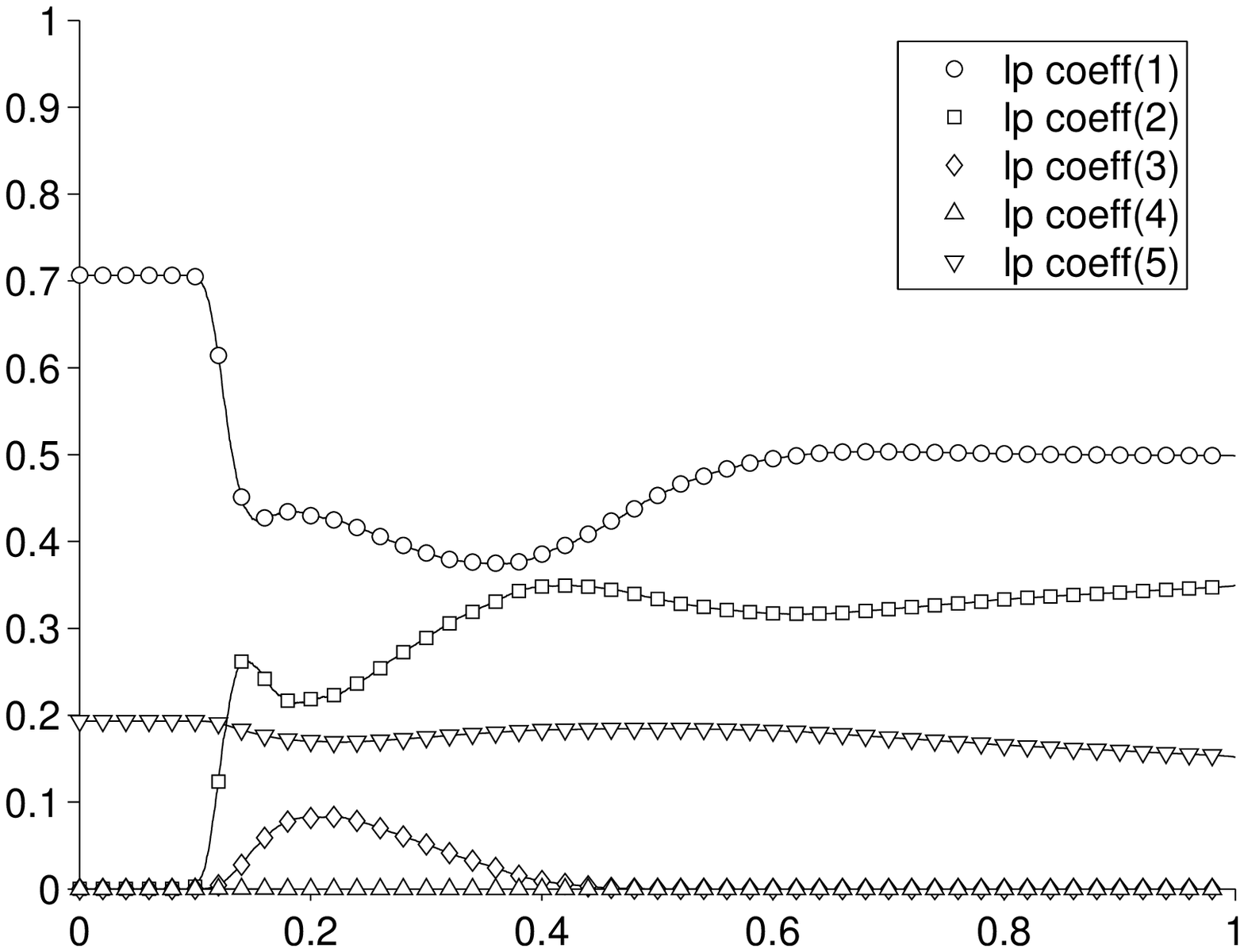,
        width=5.8cm, height=4.7cm} 
    \end{tabular*}
  \end{center}
  \caption{\rm\label{fig:coeff2}
    Plots of $c_{k,(i)}$ \emph{versus\/} $i/n$, $k=1,\ldots, n$ in
    simulations 1 and 5, where $c_{1,(i)}, \ldots, c_{L,(i)}$ are the
    coefficients to attain $\pseq{(i)}'$ (left) or $\pglb{(i)}'$
    (right), under the constraint $c_1+\cdots+c_L\ge 0.9$ in addition
    to those for $\pseq{(i)}$ and $\pglb{(i)}$ in
    Figure~\ref{fig:coeff}.
  }
\end{figure}

In Figure \ref{fig:pvals-2}, we compare the plots of $n p_{(i)}/i$ for
the $p$-values.  Since all rejections occur when $i\ll n$, we only
compare the plots with $i/n\le 0.05$.  It is seen that for small
$i/n$, the plots for $\pseq i$ and $\pglb i$ are very close to each
other, explaining why the performances of the BH procedure based on
the two types of $p$-value are similar.  Likewise, the plots for
$\pseq i'$ and $\pglb i'$ are very close to each other, and in
simulations 1--3, both are significantly lower than the plots of
$\pseq i$ and $\pglb i$, which explains the improved power yielded by
$\pseq i'$ and $\pglb i'$.  Finally, comparing Figures \ref{fig:coeff}
and \ref{fig:coeff2}, we can see that the extra constraint
$c_1+\cdots+c_L\ge 0.9$ substantially changes the plots of the
coefficients.  In particular, for $\pseq i'$ with $i\ll n$, the linear
programming sets two coefficients nonzero, as opposed to only one for
$\pseq i$.

From the above results, it is seen that the performances of the BH
procedure based on $\pseq i$ and $\pglb i$ are close to each other,
even though the latter one are subject to more constraints.  The
reason seems to lie in how $\pseq{(i)}$ are computed.  The evaluation
of $\pseq{(i)}$ incorporates the constraints imposed by $s\Sb j$ with
$j\ge i$.  For small $i$, the set of constraints is only different by
a small fraction from those that are imposed by the entire set of
$s\Sb j$.  Under regular conditions, constraints imposed by $s\Sb j$
with $j<i$ will not change the maximization substantially.  This
implies that for small $i$, $\pseq{(i)}$ and $\pglb{(i)}$ are close to
each other, as can be seen from Figure \ref{fig:pvals-2}.  Since the
BH procedure only reject nulls with small $p$-values, its performance
based on either type of $p$-values will be similar.

\section{MLE for prior probabilities of nulls}
\label{sec:mle}
Let $\Theta = \{\eno \theta L\}$.  As indicated in Section
\ref{sec:simul}, for composite nulls, in general the prior $\vf\pt$
may not be estimated consistently.  In this seciton, we consider under
what conditions $\vf\pt$ can be estimated consistently.  Under the
setup in Section~\ref{sec:intro}, suppose each $\PT_k$ has a density
$f_k$ and $\PA$ is absolutely continuous with respect to the
distribution under true nulls.  By the Radon-Nikodym theorem, $\PA$
has a density $\rho(x)\vf\pt\tp\vff(x)$ with $\rho(x)\ge 0$.  Then the
data $\eno X n$ are iid with density
\begin{align*}
  q(x) = [1-\pa + \pa \rho(x)]\vf\pt\tp\vff(x).
\end{align*}
Pretending all the nulls are true, the MLE for $\vf\pt$ is
\begin{align*}
  \hat{\vf\pt}_n = \mathop{\arg\sup}_{\vfc\in S}
  \sum_{i=1}^n \ln [\vfc\tp \vff(X_i)],
\end{align*}
where $S$ is a suitable set.  Usually, one would choose $S=\{\vfc \in
[0,1]^L: \sum c_k=1\}$ because by the definition of prior
probabilities, $\pt_k\ge 0$ and $\sum \pt_k =1$.  For the reason
described below, we shall make the setting a little more general.
Still suppose that the distribution under true nulls is a linear
combination of $\PT_k$.  However, now $\pt_k$ are allowed to be
negative.  In this setting, it had better merely regard $f_k$ as a
basis for a set of densities.  Then set
\begin{align} \label{eq:mle-S}
  S = \{\vfc: c_1+\cdots+c_L=1,\ \vfc\tp\vff\ge 0\}.
\end{align}

A reason for this choice of $S$ can be seen when density functions
under true nulls are linearly dependent.  In this case, it is
desirable to pick a basis from them, say $\eno f L$, and represent the
others as $g_j = \sum_k \lambda_{jk} f_k$.  By linear dependence,
$\lambda_{jk}$ can be negative.  Let the mixture density under true
nulls be $\vf a\tp\vff + \vf b\tp\vf g$, with $\sum a_k + \sum b_j=1$
and $a_k, b_j\ge 0$.  By representing it as $\vf\pt\tp \vff$, we get
$\pt_k = a_k + \sum_j b_j \lambda_{jk}$, which can be negative.  On
the other hand, $\sum\pt_k = 1$ and $\vf\pt\tp\vff\ge 0$.  Therefore,
$S$ in \eqref{eq:mle-S} contains $\vf\pt$.

Recall that if $A\subset\Reals^d$, its interior is $A^o = \{x:
B(\vfx,r)\subset A$ for some $r>0\}$, where $B(x,r) = \{z: |z_k -
x_k|< r, k=1,\ldots,d\}$.  By this definition, $S^o=\emptyset$.
However, regarding $S$ as a subset in $\{\vfc: \sum c_k=1\}$, we have
$S^o=\{\vfc:$ for some $r>0$, $\vfc+\vfv\in S$ 
$\forall \vfv\in B(\vf0,r)$ with $\sum v_k=0\}$.  Both $S$ and $S^o$ are
convex. Since $S$ contains all $\vfc$ with $c_k\ge 0$ and $\sum
c_k=1$, $S^o\not=\emptyset$.
\begin{prop} \label{prop:mle}
  Suppose $\int q|\ln f_k|<\infty$ and $\eno f L$ are linearly
  independent.  Let $\pa\in (0,1)$.  If $\vf\pt\in S^o$, then
  \begin{align*}
    \hat{\vf\pt}_n\convP \vf\pt \iff
    \int \rho f_k =1 \text{ for all $k$}.
  \end{align*}
\end{prop}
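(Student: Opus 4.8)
The plan is to treat $\hat{\vf\pt}_n$ as an M-estimator and exploit the concavity of the criterion. Write $L_n(\vfc) = \frac1n \sum_{i=1}^n \ln(\vfc\tp\vff(X_i))$ and its population version $L(\vfc) = \int q\,\ln(\vfc\tp\vff)$ for $\vfc\in S$. Both are concave in $\vfc$, since $\vfc\mapsto\vfc\tp\vff$ is linear and $\ln$ is concave; because $\eno f L$ are linearly independent, distinct points of $S$ give functions $\vfc\tp\vff$ differing on a set of positive $q$-measure, so Jensen's inequality is strict and $L$ is \emph{strictly} concave on $S$. I would first note that $S$ is compact: if $|\vfc^{(m)}|\to\infty$ along a sequence in $S$, normalizing produces a unit vector $\vfu$ with $\sum u_k=0$ and $\vfu\tp\vff\ge 0$ a.e., whence $\int\vfu\tp\vff=\sum u_k=0$ forces $\vfu\tp\vff\equiv 0$, contradicting linear independence. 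Hence $L$ attains its maximum on $S$ at a unique point $\vfc^*$.

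The second step is consistency: $\hat{\vf\pt}_n\convP\vfc^*$. The only delicate issue is the possible $-\infty$ singularity of $\ln(\vfc\tp\vff)$ at the boundary of $S$. Rather than build a global envelope, I would use a convexity (Rockafellar-type) argument: the strong law gives $L_n(\vfc)\to L(\vfc)$ pointwise wherever $L(\vfc)>-\infty$, and for concave functions pointwise convergence on a dense set upgrades automatically to uniform convergence on compact subsets of the relative interior. The integrability hypothesis $\int q|\ln f_k|<\infty$, together with $\vf\pt\in S^o$, guarantees $L$ is finite on a relative neighbourhood of $\vf\pt$ (along the segment from $\vf\pt$ toward any point of $S$ one has $\vfc\tp\vff\ge(1-s)\,\vf\pt\tp\vff$, which controls the negative part of the logarithm, while $\vfc\tp\vff\le(\max_k|c_k|)\sum_k f_k$ controls the positive part). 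Continuity of the argmax then yields $\hat{\vf\pt}_n\convP\vfc^*$, so the claimed equivalence collapses to the deterministic statement $\vfc^*=\vf\pt\iff\int\rho f_k=1$ for all $k$.

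The third step identifies $\vfc^*$ by a first-order condition. Because $\vf\pt\in S^o$, it maximizes $L$ iff every directional derivative along the constraint hyperplane is $\le 0$; as the feasible directions $\vfv$ (those with $\sum v_k=0$) come in $\pm$ pairs, this means they all vanish. Using concavity, the difference quotients of $t\mapsto\ln(\vf\pt\tp\vff+t\,\vfv\tp\vff)$ are monotone in $t$, so monotone convergence gives
\begin{align*}
  L'(\vf\pt;\vfv) = \int q\,\frac{\vfv\tp\vff}{\vf\pt\tp\vff}
  = \sum_{k=1}^L v_k\,g_k, \qquad
  g_k := \int q\,\frac{f_k}{\vf\pt\tp\vff}.
\end{align*}
Substituting $q=[1-\pa+\pa\rho]\,\vf\pt\tp\vff$ and $\int f_k=1$ yields $g_k=(1-\pa)+\pa\int\rho f_k$. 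Thus $\vfc^*=\vf\pt$ iff $\sum_k v_k g_k=0$ for all $\vfv\perp\vf1$, i.e.\ iff $g_k$ is constant in $k$, i.e.\ iff $\int\rho f_k$ is constant in $k$.

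Finally I would pin down the common value. Since $\PA$ has density $\rho\,\vf\pt\tp\vff$ and is a probability measure, $\sum_k\pt_k\int\rho f_k=\int\rho\,\vf\pt\tp\vff=1$; if $\int\rho f_k\equiv\gamma$, then $\gamma\sum_k\pt_k=1$, and $\sum_k\pt_k=1$ forces $\gamma=1$. Hence constancy of $\int\rho f_k$ is equivalent to $\int\rho f_k=1$ for all $k$, giving both directions. I expect the main obstacle to be the consistency step --- specifically, the local uniform convergence of $L_n$ to $L$ near $\vf\pt$ in the presence of the logarithmic singularity. This is exactly where the hypotheses $\int q|\ln f_k|<\infty$ and $\vf\pt\in S^o$ are needed, and where the concavity of the criterion, rather than any envelope condition, does the heavy lifting.
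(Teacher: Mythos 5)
Your overall strategy---compactness of $S$, strict concavity of the population criterion, argmax consistency via concavity, then a first-order condition reducing the problem to ``$\int\rho f_k$ constant in $k$'' and a normalization forcing the constant to equal $1$---is the same as the paper's, and parts of your execution are arguably cleaner: your normalization proof of compactness is slicker than the paper's case analysis, and your directional-derivative/monotone-convergence treatment of the first-order condition replaces the paper's reparametrization to $L-1$ coordinates and explicit Taylor expansion. The final step $\sum_k\pt_k\int\rho f_k=\int\rho\,\vf\pt\tp\vff=1$ is identical to the paper's.

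However, there is a concrete gap: you never establish the pointwise envelope bounds that $\vf\pt\in S^o$ provides, namely (the content of Lemma \ref{lemma:mle-l}, parts 1)--2)) that for some constants $0<\kappa\le\kappa'<\infty$,
\begin{align*}
  \kappa\,\min_k f_k(x)\ \le\ \vf\pt\tp\vff(x)\ \le\ \kappa'\,\max_k f_k(x)
  \quad\text{for all }x,
\end{align*}
and these are needed at three places you pass over. (i) Finiteness of the population criterion at $\vf\pt$: your segment bound $\vfc\tp\vff\ge(1-s)\,\vf\pt\tp\vff$ only \emph{propagates} integrability of $\ln^-$ from $\vf\pt$ to a neighbourhood; it does not establish $\int q\,\ln^-(\vf\pt\tp\vff)<\infty$ in the first place. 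Since some $\pt_k$ may be negative, $\vf\pt\tp\vff$ could a priori be far smaller than every $f_k$ on a large set, and $\int q|\ln f_k|<\infty$ alone does not control this; the lower envelope bound is exactly what reduces $\ln^-(\vf\pt\tp\vff)$ to $\max_k\ln^- f_k$. (ii) Strict concavity: your claim that distinct $\vfc_1,\vfc_2\in S$ give functions differing on a set of positive $q$-measure does not follow from linear independence alone; it needs $q>0$ Lebesgue-a.e.\ on $\{\vff\ne\vf0\}$, which again comes from the lower envelope bound applied to $\vf\pt$ (the paper's part 4)). (iii) Finiteness of your $g_k=(1-\pa)+\pa\int\rho f_k$: since $\rho$ is only known to satisfy $\int\rho\,\vf\pt\tp\vff=1$ and may be unbounded, $\int\rho f_k<\infty$ is not automatic; it follows from the upper bound $f_k\le C\,\vf\pt\tp\vff$, i.e.\ from the envelope bounds. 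Without it, the identity $L'(\vf\pt;\vfv)=\sum_k v_kg_k$, the equivalence ``$\sum_k v_kg_k=0$ for all $\vfv$ with $\sum_k v_k=0$ iff $g_k$ constant,'' and the splitting $\sum_k\pt_k\int\rho f_k=\int\rho\,\vf\pt\tp\vff$ all risk being of the form $\infty-\infty$. The proof is repairable along the lines you sketch, but the missing lemma---interiority of $\vf\pt$ implies the two-sided envelope bound---is precisely the technical workhorse of the paper's argument, and your proposal currently runs without it.
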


Apparently, if $\rho=1$, then $\int \rho f_k = 1$.  A question is
whether nontrivial $\rho\ge 0$ satisfying the condition exists.  Since
$\int \rho(f_k - f_1)=0$, 
provided $f_k\in L^2$, one might search for $\rho$ among functions in
$L^2$ that are orthogonal to $f_k-f_1$.  However, such functions are
not always nonnegative.  Moreover, oftentimes $f_k\not\in L^2$.
The construction below avoids these potential problems and seems to be
general.
\begin{example}\rm
  We only consider how to construct $\rho\ge 0$ that are unbounded on
  $E = \{x: \vf\pt\tp\vff(x)>0\}$.  The general case follows the same
  idea.  The main step is to find bounded $\eno \psi L\in
  C(\Reals^d)$, such that the $L\times L$ matrix $M = (M_{ik})$ is
  nonsingular, where $M_{ik} = \int \psi_i f_k$.  Once this is done,
  to construct $\rho$, fix $\phi\ge 0$ continuous such that $\int\phi
  f_k<\infty$ and $\sup_{x\in E}\phi(x)=\infty$.  Such $\phi$ always
  exist.  By $\det M\not=0$, there are unique $\eno a L\in\Reals$,
  such that $\sum a_i M_{ik} = 1-\int \phi f_k$ for each $k$.  Then
  $\int h f_k=1$, where $h = \phi + \sum a_i \psi_i$.  It is easy to
  see $h\in C(\Reals^d)$ is lower bounded and $\sup_{x\in E} h(x) =
  \infty$.  Then for $c>0$ small enough, $\rho=1-c+ch\in C(\Reals^d)$
  is nonnegative with $\sup_{x\in E} \rho(x) = \infty$ and $\int \rho
  f_k = 1-c + c\int h f_k = 1$.

  To see that $\eno\psi L$ as above exist, recall
  \begin{align*}
    \det M
    &= \sum_\sigma \sgn(\sigma) \prod_k \int f_{\sigma(k)}
    \psi_k \\
    &=
    \int \sum_\sigma \sgn(\sigma) \prod_k f_{\sigma(k)}(x_k)
    \psi_k(x_k)\, d\vfx \\
    &=
    \int \prod_k \psi_k(x_k) \det[f_i(x_k)]\,d\vfx.
  \end{align*}
  where the sum is over all permutations $\sigma$ of $1,\ldots, L$ and
  $\sgn(\sigma)$ is the sign of $\sigma$.  Denote $D(\vfx) =
  \det[f_i(x_k)]$.  Since $|D(\vfx)| \le \sum_\sigma \prod_k
  f_{\sigma(k)}(x_k)$, $D\in L^1$.  Because $\eno f L$ are linearly
  independent, we claim
  \begin{align}\label{eq:linind}
    \ell(\vfx: D(\vfx)=0)\not=0,
  \end{align}
  where $\ell$ is the Lebesgue measure.  If \eqref{eq:linind} holds,
  then the characteristic function of $D$ is nonzero.  Therefore,
  there are $\eno t L\not=0$, such that $\int e^{i(t_1x_1+\cdots+t_L
    x_L)} D(\vfx)\,d\vfx \not=0$.  It follows that there are
  $\psi_k(x)$ of the form  $\sin(t_k x)$ or $\cos(t_k x)$,
  such that $\det M\not=0$.

  We use induction to prove \eqref{eq:linind}.  For $L=2$, if
  $D(\vfx)=0$ a.e., then $f_1(x_1)f_2(x_2) = f_1(x_2)f_2(x_1)$, a.e.
  Integrating over $x_2$ yields $f_1(x_1)=f_2(x_1)$ a.e.,
  contradicting the assumption that $f_1$ and $f_2$ are linearly 
  independent. 

  For $L>2$, suppose \eqref{eq:linind} holds for $L-1$ linearly
  independent $f_i$.  Now
  \begin{align*}
    D(\vfx) = \sum_{i=1}^L (-1)^{L+i} f_i(x_L) M_i(\eno x{L-1}),
  \end{align*}
  where $M_i(\eno x {L-1})$ is the determinant of the $(L-1)\times
  (L-1)$ matrix consisting of $f_l(x_k)$, $l\not=i$, $k=1,\ldots,
  L-1$.  Given $\eno x{L-1}$, $D(\vfx)$ is a linear
  combination of $f_i(x_L)$.  Therefore, if $D(\vfx)=0$ a.e., then, by
  the linear independence of $f_i(x)$, $M_i(\eno x{L-1})=0$ a.e.\ for
  each $i=1,\ldots, L$.  However, this contradicts the induction
  hypothesis.  \qed
\end{example}

\section{Discussion}\label{sec:discussion}
In the article, we have focused on the case of finitely composite
nulls, where true nulls are only associated with a finite number of
distributions.  Formally, it is straightforward to generalize the
constrained maximization to the case of infinitely composite nulls.
However, usually the maximization will involve infinitely many
degrees of freedom and it becomes unclear how to accommodate this with
a finite number of observations.  A more direct approach might be to
partition the set of null distributions into a finite number of
subsets and use the envelopes of the subsets to compute $p$-values.
To be more specific, given a partition $\eno\Theta L$ of $\Theta$, let
$u_k(t) = \sup_{\theta\in\Theta_k} \phi_\theta(t)$ and $l_k(t) =
\inf_{\theta\in\Theta_k} \phi_\theta(t)$.  Then define, for example,
$M_n(t) = \sup\{\vfc\tp\vf u(t): \vfc \in \Delta,\ \vfc\tp\vf l(t)$ is
dominated by $\EP_n(t)$ up to a small margin$\}$.  Unfortunately, some
of the constraints available to the finitely composite case can no
longer be used.  Another issue is how to select the partition.  Too
coarse partition will only yield loose constraints on $c_k$ and too
fine partition will result in many degrees of freedom.  Either way,
the obtained $M_n(t)$ may not be much different from the unconstrained
maximum probability.

As is known, FDR control can be realized by the local FDRs
\cite{efron:etal:01}.  For the simple case, the local FDR at $x$ is
$(1-\pa) f_0(x)/h(x)$, where $\pa$ may be replaced with 0, $f_0$ is
the density under true nulls, and $h$ is the overall density of the
data $\eno X n$ or an estimate of the density.  For the finitely
composite case where the null distributions have densities $\eno
f L$, we may derive a conservative estimate of the local FDR by
$\rho(x)/h(x)$, where
\begin{align*}
  \rho(x) 
  = \sup\{
  \vfc\tp \vff(x): \ \vfc\in \Delta \text{ and } 
  \vfc\tp \vff \le h\}.
\end{align*}
Alternatively, if the dimension of $X_i$ is high, then we may work on
$s_i = s(X_i)$, with the local FDR defined as $\rho(s_i)/f(s_i)$,
where $h$ is now the overall density of $\eno s n$ or an estimate,
while
\begin{align*}
  \rho(t) = \sup\{
  \vfc\tp \vf\phi(t): \ \vfc\in \Delta \text{ and }
  \vfc\tp \vf\phi \le h
  \}.
\end{align*}
It is worth pointing out that, unlike the simple case, the BH
procedure based on $M_n(t)$ and the FDR control based on
$\rho(x)/h(x)$ are no longer equivalent.  The reason is that $M_n(t)$
is of the form $\max_\vfc \int \vfc\tp\vf\phi$.  The density of
$M_n(t)$, if existent, in general is different from $\max_\vfc
\vfc\tp\vf\phi$ that is associated with the local FDR.  It remains to
be seen how much difference the two approaches may have.

\setcounter{section}{0}
\renewcommand{\theequation}{A.\arabic{equation}}
\renewcommand{\thesection}{A.\arabic{section}}
\renewcommand{\thesubsection}{A.\arabic{subsection}}

\def\IInt{\int}

\newcounter{XX}
\section*{Appendix}
In this section, we give proofs of the theoretical statements of the
article.  The Lebesgue measure on $\Reals^d$ will be denoted by
$\ell$.  For any nondecreasing function $f$ defined on $\Reals$ and
$x\in\Reals$, if $A:=\sup\{t: f(t) \le x\}\not=\emptyset$, define
$f^*(x)=\sup A$, otherwise, define $f^*(x)=-\infty$.  By this
definition, if $f$ is left-continuous and $x\in f(I)$, then
$f(f^*(x))=x$.

\section{Proofs for Section \ref{sec:max}}
\begin{proof}[Proof of Proposition \ref{prop:equivalence}]
  Since $s_i=-\infty$ $\iff$ $X_i\in D_t$ for all $t$, by D3 and the
  random mixture model, the probability of the event is 0, hence
  proving 1).  By the right-continuity of $D_t$,
  \begin{align*}
    s_i \le t \iff X_i\in D_s \text{ for all $s>t$} \iff X_i \in D_t,
  \end{align*}
  yielding 2).  By $P(s_i\le t) = P(X_i\in  D_t) = \phi_\theta(t)$, 3)
  holds and 4) follows from 3) and the random mixture model.  To get
  5), given $t$, for any $\epsilon>0$, there is $\theta\in\Theta$ such
  that $M(t)\le \phi_\theta(t)+\epsilon$.  By D3, $M(s)\ge
  \phi_\theta(s)\to \phi_\theta(t)$ as $s\uto t$, giving $M(t-) +
  \epsilon \ge M(t)$.  Since $M$ is nondecreasing and $\epsilon$ is
  arbitrary, this implies $M(t-) = M(t)$.
\end{proof}

\begin{proof}[Proof of Proposition \ref{prop:FDR}]
  To see that Procedures \ref{proc:max} and \ref{proc:equivalent} are
  the same, by Proposition \ref{prop:equivalence},
  \begin{gather*}
    \text{Procedure \ref{proc:max} \textit{accepts\/} } H_i
    \!\iff\!
    s_i > \tau
    \!\iff\!
    \frac{M(t)}{\alpha} > \frac{R_n(t)}{n}\
    \,\forall t\ge s_i.
  \end{gather*}
  Because $M(t)$ is nondecreasing and $R_n(t)$ is an nondecreasing
  step function that has jumps only at $s_i$,
  \begin{gather*}
    \text{Procedure \ref{proc:max} \textit{accepts\/} } H_i
    \!\iff\!
    \frac{M(s_j)}{\alpha} > \frac{R_n(s_j)}{n}\
    \forall\, s_j\ge s_i.
  \end{gather*}
  Taking into account the possibility of ties, it is not hard to see
  that the condition on the right hand side is equivalent to $s_i>
  s\Sb R$, which implies Procedures \ref{proc:max} and
  \ref{proc:equivalent} always reject the same set of nulls.
  
  By the random  mixture model, for $X_i$ under true nulls, the
  distribution of $\PT_{\eta_i}(D_{s_i})$ is a mixture of those of
  $\phi_\theta(s(X))$ under $F_\theta$, $\theta\in\Theta$.  By
  Proposition \ref{prop:equivalence}, under $F_\theta$,
  $\phi_\theta(s(X)) \sim \unif(0,1)$.  Therefore, for
  $X_i$ under true nulls, $s_i$ are iid $\sim \unif(0,1)$.

  Procedure \ref{proc:equivalent} is the BH procedure applied to
  $M(s_i)$.  Since $M(s_i)\ge F_{\eta_i}(D_{s_i})$, under true nulls,
  $P(M(s_i)\le x) \le P(F_{\eta_i}(D_{s_i})\le x) =  x$.  The proof
  then follows from Theorem~5.1 and the comment that follows in
  \cite{benjamini:yek:01}.
\end{proof}

\section{Proofs for Section \ref{sec:lp-bh}}
\label{sec:append-lp-bh}
First, note that for Procedures \ref{proc:finite} and
\ref{proc:finite2}, the number of rejections and that of false
rejections are $R = R_n(\tau)$ and $V = V_n(\tau)$, respectively.

\begin{proof}[Proof of Lemma \ref{lemma:Mt-property}]
  Let $s<t$.  Then $\CA_{n,s}\subset\CA_{n,t}$ and $\vfc\tp \vf\phi(s)
  \le \vfc\tp\vf\phi(t)$ for any $\vfc\in\Delta$, giving $M_n(s)\le
  M_n(t)$.  Thus $M_n$ is nondecreasing.  Next suppose
  $\phi_i\in C(\Reals)$ for all $i$.

  1) Given $t$, as $0<t-u\ll 1$, $[u,t)$ has no point in
  $\CT_n$ and, almost surely, no $s_i$.  Thus $\CA_{n,u} = \CA_{n,t}$.
  Let $K=\{\vfc\in\Delta:\  \vfc\tp \vf\phi\in \CA_{n,t}\}$.  It is
  seen that $K$ is compact and $\vfc\tp\vf\phi(s)$ is a uniformly
  continuous function in $(\vfc, s)\in K\times \CI$.  Then
  $\sup_{\vfc\in K} \vfc\tp \vf\phi(s)$ is continuous in $s$, yielding
  $M_n(u)\to M_n(t)$ as $u\uparrow t$.  Thus $M_n$ is left-continuous.

  2) Since $M_n$ is nondecreasing, $M_n$ has a right-hand limit at
  every $t$.  It only remains to be shown that at every
  $t\not\in\{s_1,\ldots, s_n\}$, $M_n$ is right-continuous.  Now, as
  $0<u-t\ll 1$, $[t,u)$ contains no point in $\CT_n$ and no $s_i$,
  yielding $\CA_{n,u}=\CA_{n,t}$.  Then the right-continuity follows
  from the same argument for the left-continuity.
\end{proof}

In addition to Lemma \ref{lemma:Mt-property}, we need a few lemmas to
prove Theorem \ref{thm:lp-sequential}.  For $t\in \CI$, define
$\sigma$-field
\begin{align*}
  \CF_t = \CF(R_n(t-),\, V_n(t-),\, R_n(s),\, V_n(s):\, s\ge t).
\end{align*}
Then $\{\CF_t, t\in\CI\}$ is a backward filtration, i.e.,
$\CF_t\subset\CF_s$ for $t>s$.

\begin{lemma}\label{lemma:At-measurable}
  Suppose $\phi_i\in C(\Reals)$ for all $i$.  Then for $t\in\Reals$,
  $M_n(t)$ is $\CF_t$-measurable.
\end{lemma}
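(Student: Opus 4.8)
The plan is to exhibit $M_n(t)$ as a fixed Borel function of data that are all $\CF_t$-measurable. Writing $h=\vfc\tp\vf\phi$ in \eqref{eq:constraint-seq}, the program \eqref{eq:lp-sequential-pval} defining $M_n(t)$ maximizes the linear objective $\vfc\tp\vf\phi(t)$ over $\vfc\in\Delta$ subject to the constraints $\vfc\tp\vf\phi(s_i)\le \EP_n(s_i)+\epsilon_n$ for $s_i\ge t$ and $\vfc\tp[\vf\phi(t_2)-\vf\phi(t_1)]\le \EP_n(t_2)-\EP_n(t_1)+\epsilon_n$ for $t_1,t_2\in\CT_n$ with $t\le t_1<t_2$. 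Since $\vf\phi$ is a fixed vector of continuous functions, every coefficient vector and the objective vector above are determined once their evaluation points are known, and the grid $\CT_n$ is nonrandom; hence the only random ingredients are the evaluation points $s_i$ with $s_i\ge t$ and the right-hand sides built from $\EP_n$. Note that $V_n$ plays no role, so it suffices to show that $M_n(t)$ is measurable with respect to the $R_n$-information in $\CF_t$.

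First I would recover the constraint data. By part 2) of Proposition \ref{prop:equivalence}, $R_n(s)=\sum\cf{s_i\le s}$ is a nondecreasing, right-continuous, integer-valued step function whose jumps occur exactly at the distinct values among $\eno s n$ with sizes equal to their multiplicities. The restriction $(R_n(s))_{s\ge t}$ is $\CF_t$-measurable, and from it one reads off, by a fixed Borel operation, the jump locations in $(t,\infty)$ together with their multiplicities; moreover the number of $i$ with $s_i=t$ equals $R_n(t)-R_n(t-)$, which is $\CF_t$-measurable because both terms belong to $\CF_t$. Hence the multiset $\{s_i:\ s_i\ge t\}$ is $\CF_t$-measurable, and so are the values $\EP_n(s_i)=R_n(s_i)/n$ at these points and $\EP_n(t_1)=R_n(t_1)/n$ at the nonrandom points $t_1\in\CT_n\cap[t,\infty)$. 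Consequently all coefficients and right-hand sides of the program are $\CF_t$-measurable.

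It remains to argue that the optimal value of this program is a Borel function of its data. The feasible set $K_t=\{\vfc\in\Delta:\ \vfc\tp\vf\phi\in\CA_{n,t}\}$ is the intersection of the compact set $\Delta$ with finitely many half-spaces, hence a bounded polytope, and it is nonempty since $\vfc=\vf0$ is feasible ($\EP_n\ge0$ and $\epsilon_n>0$). To handle the random number of type-one constraints, I would partition the sample space into the $\CF_t$-measurable events $\{R_n(t-)=j\}$, $j=0,\ldots,n$; on each such event the number of constraints is a fixed integer. There the linear objective attains its maximum over the nonempty polytope $K_t$ at a vertex, and each vertex is the unique solution of the subsystem obtained by activating some $L$ of the defining (in)equalities, whenever the associated matrix is nonsingular. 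Enumerating the finitely many such subsystems $J$, each candidate vertex $\vfc^{(J)}$ is, on the measurable set where its matrix is invertible, a rational---hence Borel---function of the measurable program data, and the feasibility predicate $\vfc^{(J)}\in K_t$ is measurable; therefore
\begin{align*}
  M_n(t)=\max_J\Cbr{\Grp{\vfc^{(J)}}\tp\vf\phi(t):\ \vfc^{(J)}\text{ is defined and lies in }K_t}
\end{align*}
is a maximum of finitely many $\CF_t$-measurable quantities, so $M_n(t)\cf{R_n(t-)=j}$ is $\CF_t$-measurable, and summing over $j$ gives the claim.

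The step I expect to be the main obstacle is this last one: rigorously turning the optimal value of a linear program with a random number of randomly located constraints into a genuine Borel function of the data. The conditioning on $\{R_n(t-)=j\}$ followed by vertex enumeration is the elementary route; alternatively one may invoke the measurable maximum theorem for the measurable, nonempty compact-valued correspondence $\omega\mapsto K_t(\omega)$, which directly yields measurability of $\omega\mapsto\max_{\vfc\in K_t(\omega)}\vfc\tp\vf\phi(t)$. Either way the proof concludes.
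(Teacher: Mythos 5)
Your proof is correct, but it takes a genuinely different route from the paper's. The paper avoids both the reconstruction of the constraint data and any linear-programming machinery: it first checks, for each \emph{fixed nonrandom} $\vfc\in\Delta$, that $\{\vfc\tp\vf\phi\in\CA_{n,t}\}\in\CF_t$ (rephrasing the first block of constraints at the jump points of $R_n$ on $[t,\infty)$, exactly the observation you also use), and then exploits a monotonicity property specific to \eqref{eq:constraint-seq}: because each $\phi_k$ is nonnegative and nondecreasing, $\vfc\tp\vf\phi\in\CA_{n,t}$ implies $\vfr\tp\vf\phi\in\CA_{n,t}$ for every $\vfr\in\Rats^L\cap\Delta$ with $r_k\le c_k$, so the supremum in \eqref{eq:lp-sequential-pval} is unchanged when restricted to rational $\vfr$; consequently $\{M_n(t)\le a\}$ is the countable intersection, over rational $\vfr$ with $\vfr\tp\vf\phi(t)>a$, of the measurable events $\{\vfr\tp\vf\phi\notin\CA_{n,t}\}$, hence lies in $\CF_t$. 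You instead (i) recover the multiset $\{s_i: s_i\ge t\}$ and the right-hand sides measurably from $R_n(t-)$ and the path of $R_n$ on $[t,\infty)$, and (ii) show the optimal value of the resulting finite linear program is a Borel function of its data by conditioning on $\{R_n(t-)=j\}$ and enumerating basic feasible solutions (or invoking the measurable maximum theorem). Your route is heavier but buys more: it nowhere uses the downward-closedness of the feasible set under coordinatewise decrease of $\vfc$, so it would survive constraint families lacking that monotone structure, and it also yields measurability of an optimizing $\vfc$, not merely of the value --- a point the paper itself glosses over when it asserts the existence of a random maximizer in \eqref{eq:M-c-t}. The paper's route is shorter, needing only per-$\vfc$ measurability plus the dense-rational reduction. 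One spot in your write-up that should be firmed up is the ``fixed Borel operation'' recovering the jumps: to be airtight, set $S_k=\inf\{s\ge t:\ R_n(s)\ge k\}$ for $k>R_n(t-)$, note that for $q>t$ one has $\{S_k<q\}=\bigcup_{r\in\Rats\cap[t,q)}\{R_n(r)\ge k\}\in\CF_t$ and $\{S_k\le t\}=\{R_n(t)\ge k\}\in\CF_t$, and handle the compositions $R_n(S_k)$ and $\vf\phi(S_k)$ via right-continuity of $R_n$ and continuity of $\vf\phi$; this is routine, but it is the place where your argument's measurability actually gets anchored to the generators of $\CF_t$.
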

\subsection*{\sc Proof}
It suffices to show that given $a\ge 0$,
$\{M_n(t)\le a\}\in \CF_t$ for $t\in\CI$.  For $\vfc\in\Delta$, 
$\vfc\tp\vf\phi\in C(\Reals)$ and $\{\vfc\tp\vf\phi\in \CA_{n,t}\} =
E_1\cap E_2$, where
\begin{align*}
  E_1
  &= \Cbr{\vfc\tp\vf\phi(s_i)\le \EP_n(s_i) + \epsilon_n
    \text{ for } s_i\ge t
  },\\
  E_2
  &=\Cbr{\!\!
    \begin{array}{c}
      \EP_n(t_2)-\EP_n(t_1) \ge  
      \vfc\tp[\vf\phi(t_2)-\vf\phi(t_1)] - \epsilon_n \text{ for}
      \\[.5ex]
      t_i \in \CT_n \cap [t,\tau_2] \text{ with } t_1<t_2
    \end{array}
    \!\!
  }.
\end{align*}

Note $E_1=\{\vfc\tp\vf\phi(s)\le R_n(s)/n +  \epsilon_n$ $\forall
s\ge t$ with $R_n(s)>R_n(s-)\}$.  Since $R_n(s-)\in \CF_t$ for $s\ge
t$, it can be seen that $E_1\in \CF_t$.  On the other hand, $E_2\in
\CF_t$.  Therefore, $\{\vfc\tp\vf\phi\in \CA_{n,t}\}\in \CF_t$.

Since $\vfc\tp\vf\phi\in\CA_{n,t}$ implies  $\vfr\tp\vf\phi\in
\CA_{n,t}$ for any $\vfr\in \Rats^L\cap\Delta$ with $r_i\le c_i$,
where $\Rats$ is the set of rational numbers, $M_n(t) =
\sup\{\vfr\tp\vf\phi(t): \vfr\in \Rats^L\cap\Delta,\
\vfr\tp\vf\phi\in\CA_{n,t}\}$.  Notice that $\vfr\tp\vf\phi(t)$ is
nonrandom.  Then
\begin{align*}
  \Cbr{M_n(t)\le a} 
  =
  \bigcap_{
    \substack{\vfr\in \Rats^L\cap \Delta \ \rm s.t.\\
      \vfr\tp\vf\phi(t)>a}
  }\{\vfr\tp\vf\phi\not\in \CA_{n,t}\}\in\CF_t.  \eqed
\end{align*}

The next goal is to show $\tau$ is a stopping time of the backward
filtration $\CF_t$.  If $\sup\CI=\infty$, then $\tau$ has to start at
$\infty$.  To get around this problem, we use truncations.  Let
$\CI_R$ be as in Procedure \ref{proc:finite}.  Given $c<\sup\CI$, define
\begin{align*}
  \CI_c = \CI_R\cap (-\infty, c],
  \quad
  \tau_c =
  \begin{cases}
    \sup\CI_c & \text{if}\ \CI_c\not=\emptyset\\
    \inf\CI & \text{otherwise}
  \end{cases}
\end{align*}
\begin{lemma}\label{lemma:tau0}
  As $c\uto \sup\CI$, $\tau_c\uto \tau$ a.s.
\end{lemma}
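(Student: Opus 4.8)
The plan is to treat this purely as a deterministic monotone-limit statement, carried out on the almost-sure event furnished by Lemma \ref{lemma:Mt-property} on which $M_n$ has its stated regularity, so that $\CI_R$, $\CI_c$, $\tau$ and $\tau_c$ are all well defined. First I would record monotonicity in $c$: if $c\le c'$ then $\CI_c\subset\CI_{c'}$, and with the convention that $\tau_c=\inf\CI$ when $\CI_c=\emptyset$ one checks $\tau_c\le\tau_{c'}$ in each of the three cases (both sets empty, only the smaller one empty, both nonempty). Hence $c\mapsto\tau_c$ is nondecreasing and the limit $\tau_\infty:=\lim_{c\uto\sup\CI}\tau_c$ exists; it remains to identify it with $\tau$.

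The upper bound $\tau_\infty\le\tau$ is immediate: since $\CI_c\subset\CI_R$ we get $\tau_c=\sup\CI_c\le\sup\CI_R=\tau$ when $\CI_c\neq\emptyset$, and $\tau_c=\inf\CI\le\tau$ otherwise. The substance is the reverse inequality. If $\CI_R=\emptyset$ then every $\CI_c=\emptyset$ and both sides equal $\inf\CI$, so suppose $\CI_R\neq\emptyset$ and fix any $t\in\CI_R$. The key observation is that, because $\CI$ is an \emph{open} interval, every element of $\CI_R\subset\CI$ lies strictly below $\sup\CI$; thus for any $c$ with $t\le c<\sup\CI$ we have $t\in\CI_c$, hence $\tau_c\ge t$ and therefore $\tau_\infty\ge t$. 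Taking the supremum over $t\in\CI_R$ gives $\tau_\infty\ge\sup\CI_R=\tau$, and combining the two bounds yields $\tau_\infty=\tau$, i.e. $\tau_c\uto\tau$.

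I do not anticipate a genuine obstacle here, as the statement is essentially a bookkeeping exercise about suprema of nested sets. The only points requiring care are handling the empty-set convention $\tau_c=\inf\CI$ consistently in the monotonicity step, and invoking the openness of $\CI$ to guarantee that the supremum $\tau=\sup\CI_R$ is approached from within the truncated sets $\CI_c$; this is precisely the property that would fail if $\sup\CI$ could itself belong to $\CI_R$, which openness rules out.
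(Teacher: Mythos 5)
Your proof is correct, but it takes a genuinely different route from the paper's. You treat the lemma as a purely deterministic statement about suprema of nested sets: monotonicity of $c\mapsto\tau_c$, the trivial bound $\tau_c\le\tau$, and the key observation that openness of $\CI$ forces every $t\in\CI_R$ to lie strictly below $\sup\CI$, so $t\in\CI_c$ once $c\ge t$, giving $\lim_{c\uto\sup\CI}\tau_c\ge\sup\CI_R=\tau$. This establishes the convergence surely, on every sample point, with no probabilistic input; your appeal to the regularity event of Lemma \ref{lemma:Mt-property} is in fact superfluous, since $\CI_R$, $\CI_c$, $\tau$, $\tau_c$ are well defined regardless. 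The paper argues differently: it reduces the lemma to the claim that $\tau<\sup\CI$ a.s., on which event $\CI_c=\CI_R$ for every $c\in(\tau,\sup\CI)$, so that $\tau_c$ is eventually \emph{equal} to $\tau$ and the convergence is immediate. That claim is then proved by contradiction using probabilistic/analytic input: if $\tau=\sup\CI$ there would be $t_k\uto\sup\CI$ with $M_n(t_k)\le\alpha[R_n(t_k)\vee 1]/n\le\alpha<1$, while by the left-continuity in Lemma \ref{lemma:Mt-property} one has $M_n(t_k)\to M_n(\sup\CI)=1$ a.s. So the paper's route buys the additional, independently interesting fact that the rejection threshold stays strictly inside $\CI$ almost surely (whence $\tau_c$ stabilizes at $\tau$), at the price of invoking the regularity of $M_n$ and $\alpha<1$; your route is more elementary and more robust (it would survive even if $P(\tau=\sup\CI)>0$), but yields only the limit statement itself.
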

\begin{proof}
  It suffices to show $\tau<\sup\CI$ a.s.  By definition, $\tau\le
  \sup\CI$.  The event $\{\tau=\sup\CI\}$ implies there are
  $t_k\uto \sup\CI$, such that $M_n(t_k)\le \alpha[R_n(t_k)\vee 1]/n$.
  By Lemma \ref{lemma:Mt-property}, $M_n(t_k)\to M_n(\sup\CI)=1$ a.s.
  On the other hand, $[R_n(t_k)\vee 1]/n\le 1$.  Therefore,
  $P(\tau=\sup\CI)=0$.
\end{proof}

\begin{lemma}\label{lemma:tau}
  Suppose $\phi_k\in C(\Reals)$.  Then 1) there is
  $t_0>\inf\CI$ such that for any $c\in \CI$, $\tau_c \ge t_0$, 2)
  for $c\in\CI$, $\tau_c$ is a stopping time of the backward
  filtration $\{\CF_t,\, t\in (\inf\CI, c]\}$.
\end{lemma}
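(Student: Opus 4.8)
The plan is to treat the two assertions separately: the first by a deterministic comparison with the unconstrained maximum $M(t)=\max_k\phi_k(t)$, the second by showing that $\CI_R$ is closed and that membership in $\CI_R$ at levels $\ge t$ is decided by $\CF_t$.

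For 1), I would exploit the crude bound $M_n(t)\le M(t)=\max_k\phi_k(t)$, valid because $\vfc\tp\vf\phi(t)\le\max_k\phi_k(t)$ for every $\vfc\in\Delta$ (all $\phi_k\ge0$ and $\sum c_k\le1$). The function $M$ is deterministic, nondecreasing and continuous, with $M(t)\to0$ as $t\dto\inf\CI$ since each $\phi_k$ is continuous and $\phi_k(\inf\CI)=0$. Hence I can fix a \emph{deterministic} $t_0>\inf\CI$ with $M(t_0)<\alpha/n$. Then for every $t\in(\inf\CI,t_0]$ we have $M_n(t)/\alpha\le M(t)/\alpha\le M(t_0)/\alpha<1/n\le(R_n(t)\vee1)/n$, so $(\inf\CI,t_0]\subseteq\CI_R$ surely. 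Consequently $\tau_c=\sup\CI_c\ge t_0$ for every $c\ge t_0$, which is the only regime used when $c\uto\sup\CI$ in Lemma \ref{lemma:tau0}.

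For 2), the first step is to show that $\CI_R$ is closed. Write $g(t)=(R_n(t)\vee1)/n-M_n(t)/\alpha$, so that $\CI_R=\{g\ge0\}$. Since $R_n$ is nondecreasing and right-continuous it is upper semicontinuous, and by Lemma \ref{lemma:Mt-property} $M_n$ is nondecreasing and left-continuous, so $-M_n$ is upper semicontinuous; as $\vee1$ and finite sums preserve upper semicontinuity, $g$ is upper semicontinuous and $\CI_R=\{g\ge0\}$ is closed. Then $\CI_c=\CI_R\cap(-\infty,c]$ is closed, $\tau_c=\max\CI_c$ is attained whenever $\CI_c\ne\emptyset$, and for $t\in(\inf\CI,c]$ one checks directly that
\[ \{\tau_c\ge t\}=\{\CI_R\cap[t,c]\ne\emptyset\}=\Cbr{\max_{t\le u\le c}g(u)\ge0}, \]
the last maximum being attained because $g$ is upper semicontinuous on the compact $[t,c]$. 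It then suffices to place this event in $\CF_t$. For each fixed $u\ge t$, $M_n(u)$ is $\CF_u$-measurable by Lemma \ref{lemma:At-measurable} and $\CF_u\subseteq\CF_t$, while $R_n(u)\in\CF_t$ by the very definition of $\CF_t$; hence each $g(u)$ with $u\ge t$ is $\CF_t$-measurable.

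The main obstacle is that $g$ is only \emph{upper} semicontinuous: at each $s_i$ it may strictly exceed both one-sided limits, so $\max_{[t,c]}g$ cannot simply be read off as a supremum over a countable dense set. I would resolve this by isolating the finitely many jump points. From $\CF_t$ one recovers the exact locations of all $s_i\ge t$ (through the jumps of $R_n(s)$, $s\ge t$) together with $R_n(t-)$, so the index set $\{i:t\le s_i\le c\}$ and the spike values $g(s_i)$ are $\CF_t$-measurable. Away from these finitely many points $g$ is continuous, so its supremum over $[t,c]$ with the $s_i$ removed equals a supremum over the rationals in $[t,c]$ together with the endpoints $t,c$, each term $\CF_t$-measurable. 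Taking the maximum of the finite collection of spike values $g(s_i)$ and this countable supremum exhibits $\max_{t\le u\le c}g(u)$ as a $\CF_t$-measurable random variable, whence $\{\tau_c\ge t\}\in\CF_t$. This is exactly the stopping-time property for the backward filtration $\{\CF_t,\,t\in(\inf\CI,c]\}$, completing 2).
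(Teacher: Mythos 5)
Your argument is correct, and for part 1 it is essentially the paper's: the paper uses the envelope $u(t)=\phi_1(t)+\cdots+\phi_L(t)$ where you use $\max_k\phi_k(t)$, but both are deterministic, continuous, nondecreasing, vanish at $\inf\CI$, and dominate $M_n$, which is all that is needed. You are also right to restrict to $c\ge t_0$: for $c<t_0$ one simply has $\tau_c=c$, so the lemma's uniform-in-$c$ phrasing is loose; the paper's own proof takes $t_0$ depending on $c$, and only the regime $c\uto\sup\CI$ matters for the dominated-convergence step in Theorem \ref{thm:lp-sequential}. For part 2 your implementation genuinely differs. The paper obtains your closedness fact by a direct contradiction (if $\sup\CI_c\ge t$ but $\CI_c\cap[t,c]=\emptyset$, take $t_k\uto t$ in $\CI_c$ and use left-continuity of $M_n$ together with $R_n(t_k)\to R_n(t-)\le R_n(t)$), and then, instead of computing $\max_{[t,c]}g$, it exhibits the event directly as
\begin{align*}
  \{\tau_c\ge t\}
  = A\cup\bigcap_{k\ge 1}\,\bigcup_{r\in\Rats\cap(t,c]}
  \Cbr{\frac{M_n(r)}{\alpha}\le\frac{R_n(r+1/k)\vee 1}{n}},
  \qquad
  A=\Cbr{\frac{M_n(t)}{\alpha}\le\frac{R_n(t)\vee 1}{n}},
\end{align*}
a countable combination of manifestly $\CF_t$-measurable events. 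The asymmetric sandwich (evaluate $M_n$ at a rational just below a candidate point, $R_n$ just above it) is exactly what absorbs the discontinuities of $g$ at the $s_i$ without ever locating them: monotonicity gives one inclusion, and left-continuity of $M_n$ plus right-continuity of $R_n$ give the other. Your upper-semicontinuity packaging of the closedness step is cleaner and somewhat more general, but your treatment of the spikes is the one step you assert rather than prove: that the jump locations $s_i\in[t,c]$ and the spike values $g(s_i)$ are $\CF_t$-measurable requires evaluating the processes at random times. This can be filled in—e.g.\ show the successive jump locations $u_j$ of $R_n$ on $[t,c]$ are $\CF_t$-measurable via $\{u_1>a\}=\{R_n(a)=R_n(t-)\}$ and its iterates, then write $M_n(u_j)=\sup\{M_n(q):q\in\Rats\cap[t,u_j)\}$ (left-continuity plus monotonicity, with the case $u_j=t$ handled by Lemma \ref{lemma:At-measurable}) and $R_n(u_j)=\inf\{R_n(q):q\in\Rats,\ q>u_j\}$ (right-continuity), each a countable operation on $\CF_t$-measurable variables—but this is precisely the work the paper's rational-sandwich construction does implicitly. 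With that detail supplied, the two proofs are equivalent in content.
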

\begin{proof}
  1) Let $u(t):=\phi_1(t) + \cdots + \phi_L(t)$.  Then $M_n(t)\le
  u(t)$ and
  \begin{align*}
    \tau_c
    &
    \ge \sup\Cbr{t\in (\inf\CI, c]:\ \frac{u(t)}{\alpha} \le
      \frac{R_n(t)\vee 1}{n}
    } \\
    &
    \ge t_0:=\sup\Cbr{t\in(\inf\CI, c]:\ \frac{u(t)}{\alpha} \le 
      \frac{1}{n}}.
  \end{align*}
  Since $\phi_k\in C(\Reals)$ and $\phi_k(t)\to 0$ as $t\to\inf\CI$,
  the set on the right hand side is nonempty, yielding $t_0>\inf\CI$.

  2) By definition, $\tau_c$ is a stopping time of the backward
  filtration $\CF_t$ if $\{\tau_c\ge t\}\in \CF_t$ for every $t\in
  (\inf\CI, c]$.  Denote $E=\{\tau_c\ge t\}$.  We first show
  \begin{align} \label{eq:stop-E}
    E = \Cbr{\exists s\in [t,c] \text{ such that } 
    \frac{M_n(s)}{\alpha} \le \frac{R_n(s)\vee 1}{n}
    }.
  \end{align}

  The right hand side of \eqref{eq:stop-E} equals $\{\CI_c\cap[t,c]
  \not= \emptyset\}$, which is a subset of $E$. 
  On the other hand, the difference between the two events is
  \begin{align*}
    &
    \{\tau_c\ge t,\ \CI_c\cap [t,c]=\emptyset\}\\
    =\
    &
    \{\CI_c\not=\emptyset,\ \CI_c\cap [t,c]=\emptyset, \ \sup\CI_c\ge
    t\} \\
    \subset\
    &
    \Cbr{
      \frac{M_n(t)}{\alpha}> \frac{R_n(t)\vee 1}{n},\
      \exists
      t_k\uparrow t \text{ with }
      \frac{M_n(t_k)}{\alpha} \le \frac{R_n(t_k)\vee 1}{n}
    }.
  \end{align*}
  Since by Lemma \ref{lemma:Mt-property} $M_n$ is left-continuous,
  $M_n(t_k)\to M_n(t)$.  On the other hand, $R_n(t_k) \to R_n(t-)\le
  R_n(t)$.  Thus, the last event is empty and \eqref{eq:stop-E} holds.
  Note that by similar argument,
  \begin{align} \label{eq:M-tau}
    M_n(\tau_c)/\alpha \le [R_n(\tau_c)\vee 1]/n.
  \end{align}

  Let $A=\{M_n(t)/\alpha \le [R_n(t)\vee 1]/n\}$.  Then $A\subset E$
  and $A\in \CF_t$.  We next show $E = A\cup\Gamma$, where
  $\Gamma=\bigcap_{k=1}^\infty\bigcup_{r\in \Rats\cap (t,c]}
  \Gamma_{r,k}$, with
  \begin{align*}
    \Gamma_{r,k} = \Cbr{
      \frac{M_n(r)}{\alpha} \le \frac{R_n(r+1/k)\vee 1}{n}
    }.
  \end{align*}
  Once this is done, by $M_n(r)\in \CF_r$ (cf.\ Lemma
  \ref{lemma:At-measurable}) and $R_n(r+1/k)\in \CF_r$,
  $\Gamma_{r,k}\in \CF_r \subset \CF_t$ for any $r>t$.  Then $E\in \CF_t$.

  Note $E-A$ implies $\tau_c>t$, which in turn implies there are
  $r_k\in \Rats$ with $t<r_k<\tau_c<r_k+1/k$.  By
  \begin{align*}
    \frac{M_n(r_k)}{\alpha}
    \le 
    \frac{M_n(\tau_c)}{\alpha}
    \le
    \frac{R_n(\tau_c)\vee 1}{n}
    \le
    \frac{R_n(r_k+1/k)\vee 1}{n},
  \end{align*}
  $\Gamma_{r_k,k}$ holds for all $k$.   Thus $E-A\subset\Gamma$.

  It remains to show that $\Gamma\subset E$.  Suppose there are
  $r_k\in \Rats\cap (t,c]$ with $M_n(r_k)/\alpha \le [R_n(r_k+1/k)\vee
  1]/n$.  Then $r_k$ has a subsequence, say, itself, that
  converges to some $s\in [t,c]$.  Since $M_n$ is nondecreasing and
  left-continuous, while $R_n$ is nondecreasing and right-continuous,
  \begin{align*}
    \frac{M_n(s)}{\alpha} \le \Linf_k \frac{M_n(r_k)}{\alpha}
    \le \Lsup_k \frac{R_n(r_k+1/k)\vee 1}{n} \le
    \frac{R_n(s)\vee 1}{n}.
  \end{align*}
  Therefore $s\in \CI_c$ and $\CI_c\cap [t,c]\not=\emptyset$.  Thus
  $\Gamma\subset E$.
\end{proof}

\begin{lemma}\label{lemma:empirical}
  For $n\ge 1$, denote
  \begin{align} \label{eq:Gamma}
    \Gamma_n = \Cbr{(1-\pa)\vf\pt\tp\vf\phi \in \CA_{n,t},
      \ \forall t\in\CI}.
  \end{align}
  Suppose $\PD$ is continuous.  Then, provided
  $\exp(-2n\epsilon_n^2)\le 1/2$,
  \begin{align*}
    P(\Gamma_n) \ge 1- (1+|\CT_n|)\exp\Cbr{-2 n\epsilon_n^2}.
  \end{align*}
\end{lemma}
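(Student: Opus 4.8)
The plan is to exhibit $\Gamma_n$ as the intersection of two explicit ``good'' events comparing the empirical distribution $\EP_n$ of the $s_i$ with their true distribution $\PD$, and then to bound the complement of each by one-sided DKW / Hoeffding estimates. First I would record the structural fact that makes the candidate $h:=(1-\pa)\vf\pt\tp\vf\phi$ admissible. By Proposition \ref{prop:equivalence}(4), $\PD(t)-h(t)=\pa\PA(D_t)=:g(t)$; by D1 the sets $D_t$ increase in $t$, so $g$ is nonnegative and nondecreasing, and $h$ is continuous because the $\phi_i$ are, so $h\in C(\CI)$ and membership in $\CA_{n,t}$ makes sense. Using $\CA_{n,s}\subset\CA_{n,t}$ for $s<t$ (as in the proof of Lemma \ref{lemma:Mt-property}) together with $s_i\in\CI$ a.s.\ (Proposition \ref{prop:equivalence}(1)), the constraints are most stringent as $t\dto\inf\CI$ and accumulate to two deterministic statements equivalent to $h\in\CA_{n,t}$ for every $t$: (a) $h(s_i)\le\EP_n(s_i)+\epsilon_n$ for every $i$; and (b) $\EP_n(t_2)-\EP_n(t_1)\ge h(t_2)-h(t_1)-\epsilon_n$ for every pair $t_1<t_2$ in $\CT_n$.

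Now I would use the monotonicity of $g$ to replace $h$ by $\PD$ in both. Since $g\ge 0$ we have $h\le\PD$, so (a) follows from the single event
\[ A=\Bigl\{\sup_{t}\bigl(\PD(t)-\EP_n(t)\bigr)\le\epsilon_n\Bigr\}; \]
and since $g$ is nondecreasing, $h(t_2)-h(t_1)\le\PD(t_2)-\PD(t_1)$, so (b) follows once $[\PD(t_2)-\EP_n(t_2)]-[\PD(t_1)-\EP_n(t_1)]\le\epsilon_n$ for all such pairs. For $A$, the one-sided DKW (Massart) inequality gives $P(A^c)\le\exp(-2n\epsilon_n^2)$; this supplies the ``$1$'' in the bound.

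For (b), write $W_n:=\EP_n-\PD$; the requirement is that $W_n$ not drop by more than $\epsilon_n$ from any earlier to any later grid point, i.e.\ $W_n(t_1)-W_n(t_2)\le\epsilon_n$ for all $t_1<t_2$ in $\CT_n$. I would handle this by fixing the left endpoint $t_1=\tau$ and bounding, for each of the $|\CT_n|$ choices of $\tau$, the probability that $\sup_{t_2>\tau}\{[\PD(t_2)-\PD(\tau)]-[\EP_n(t_2)-\EP_n(\tau)]\}>\epsilon_n$. As $t_2$ varies, this increment is the true-minus-empirical measure of $(\tau,t_2]$, i.e.\ the centered empirical process of the observations lying in $(\tau,\infty)$, so an anchored one-sided maximal DKW bound yields $\exp(-2n\epsilon_n^2)$ per anchor, and a union bound over the $|\CT_n|$ anchors contributes $|\CT_n|\exp(-2n\epsilon_n^2)$. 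Adding the two estimates gives $P(\Gamma_n^c)\le(1+|\CT_n|)\exp(-2n\epsilon_n^2)$. I expect the hypothesis $\exp(-2n\epsilon_n^2)\le 1/2$ to enter precisely here, in converting the conditional (given the number of observations above $\tau$) maximal bound into the stated unconditional one.

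The reduction in the first two steps is routine once the monotonicity of $g$ is exploited. The main obstacle is step (b): a naive per-pair union bound over $\CT_n$ would cost $\binom{|\CT_n|}{2}$ exponential terms, which is too many, while a crude two-sided split that controls $W_n(t_1)$ and $-W_n(t_2)$ separately would force the threshold $\epsilon_n/2$ and spoil the exponent. Getting the linear count $|\CT_n|$ together with the sharp exponent $2n\epsilon_n^2$ forces one to treat each left endpoint $\tau$ jointly with the whole tail $t_2>\tau$ and to establish a one-sided maximal deviation inequality for the anchored empirical process with the clean DKW constant; matching that constant is the delicate point.
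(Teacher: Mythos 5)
Your reduction and overall architecture coincide with the paper's own proof: the paper likewise reduces membership of $(1-\pa)\vf\pt\tp\vf\phi$ in every $\CA_{n,t}$ to (a) a global one-sided bound $\sup_t(\PD-\EP_n)<\epsilon_n$, handled by Massart's one-sided DKW inequality, and (b) an anchored increment bound with one anchor per point of $\CT_n$, followed by a union bound over the $|\CT_n|$ anchors; the passage from $h=(1-\pa)\vf\pt\tp\vf\phi$ to $\PD$ via monotonicity of $\pa\PA(D_t)$ is also the same. Up to that point your argument is correct. One small misattribution: the hypothesis $\exp(-2n\epsilon_n^2)\le 1/2$ is simply the condition under which Massart's \emph{one-sided} inequality holds with the tight constant $1$, and it is invoked in both DKW applications; it plays no role in any conditional-to-unconditional conversion.

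The genuine gap is exactly at the point you flag as delicate: the anchored one-sided maximal inequality
\begin{align*}
P\Cbr{\sup_{t\ge x}\Cbr{[\PD(t)-\PD(x)]-[\EP_n(t)-\EP_n(x)]}\ge\lambda}\le\exp(-2n\lambda^2)
\end{align*}
(the paper's \eqref{eq:DKW-modified}) is asserted but never proved, and the route you sketch --- viewing the increment as the centered empirical process of the observations lying in $(\tau,\infty)$, conditionally on their number --- does not deliver it. Conditional on $N=\#\{i: s_i>\tau\}$, the increment equals $pG(t)-\tfrac{N}{n}\hat G_N(t)$, where $p=1-\PD(\tau)$, $G$ is the conditional law on $(\tau,\infty)$ and $\hat G_N$ the empirical law of those $N$ points; DKW applied to that subsample gives exponent $2N\lambda^2$ with a \emph{random} $N<n$, and an extra binomial fluctuation $(p-N/n)G(t)$ remains. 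Averaging the conditional bound over $N$ yields roughly $(1-p+pe^{-2\lambda^2})^n$, which is far weaker than $\exp(-2n\lambda^2)$ when $p$ is small, and the hypothesis $\exp(-2n\epsilon_n^2)\le 1/2$ does not repair this. The paper's device keeps all $n$ observations in play: with $U_i=\PD(s_i)\sim\unif(0,1)$, $y=\PD(x)$, and the cyclic shift $V_i=U_i-y+\cf{U_i\le y}$, which is again $\unif(0,1)$, the increment $[s-y]-[\bbG_n(s)-\bbG_n(y)]$ (with $\bbG_n$ the empirical distribution of the $U_i$) becomes $u-\bbG_n'(u)$ at $u=s-y$, where $\bbG_n'$ is the empirical distribution of the $V_i$. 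Hence the anchored supremum is dominated by an ordinary one-sided deviation of a full $n$-sample uniform empirical process, and Massart's inequality gives the clean constant. You need this shift argument (or an equivalent device) to complete step (b); as written, your proof is incomplete at its crucial step.
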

\begin{proof}
  Since $\PD$ is continuous, by the DKW inequality \cite{massart:90},
  for $\lambda>0$ and $n\ge 1$, as long as $\exp(-2n\lambda^2)\le
  1/2$,
  \begin{align*}
    P\Cbr{\sup (\PD-\EP_n) \ge  \lambda
    } 
    \le
    \exp(-2n\lambda^2).
  \end{align*}
  By $\PD(t) = (1-\pa)\vf\pt\tp \vf\phi(t) + \pa
  \PA(D_t)$, 
  \begin{align*}
    P\Cbr{(1-\pa)\vf\pt\tp\vf\phi(t) \ge \EP_n(t)+\lambda \text{ for
        some } t}
    \le \exp(-2n\lambda^2).
  \end{align*}

  DKW inequality also implies that, given $x\in\Reals$,
  \begin{align} \label{eq:DKW-modified}
    P\Cbr{\sup_{t\ge x} \Cbr{[\PD(t)-\PD(x)] - [\EP_n(t)-\EP_n(x)]}
      \ge \lambda} 
    \le \exp(-2n\lambda^2).
  \end{align}
  Assuming \eqref{eq:DKW-modified} is true for now, it follows that
  \begin{align*}
    P\Cbr{\!\!
      \begin{array}{c}
        \displaystyle
        \PD(t) - \PD(t_i) \ge \EP_n(t) - \EP_n(t_i)+\lambda
        \\[.5ex]
        \text{for some } t_i\in\CT_n \text{ and } t>t_i
      \end{array}
      \!\!} \le
    |\CT_n|\exp(-2n\lambda^2).
  \end{align*}
  Since $\PD(t) - \PD(t_i) \ge (1-\pa)\vf\pt\tp[\vf\phi(t) -
  \vf\phi(t_i)]$ for $t>t_i$, by letting $\lambda=\epsilon_n$, the
  Lemma then follows.

  Finally, to get \eqref{eq:DKW-modified}, let $y=\PD(x)$.  By
  quantile transformation,
  \begin{align*}
    &
    \sup_{t\ge x} \{[\PD(t)-\PD(x)] - [\EP_n(t)-\EP_n(x)]\} \\
    \sim\
    &
    \xi=\sup_{s\ge y} \{s-y - [\bbG_n(s)-\bbG_n(y)]\},
  \end{align*}
  where $\bbG_n$ is the empirical distribution of $U_i=Q(X_i)$.
  Since $U_i$ are iid $\sim\unif(0,1)$, $V_i = U_i-y + \cf{U_i\le y}$
  are iid $\sim\unif(0,1)$ as well and $\xi = \sup_{0\le s\le 1-y}
  [s-\bbG_n'(s)]$, where $\bbG_n'$ is the empirical distribution of
  $V_i$.  Applying DKW inequality to $\xi$, it is seen that
  \eqref{eq:DKW-modified} follows.
\end{proof}

\begin{proof}[Proof of Theorem \ref{thm:lp-sequential}]
  By Proposition \ref{prop:equivalence}, under true $H_i$, $s_i\sim
  \vf\pt\tp \vf\phi$, which is continuous and positive on $\CI$.  As a
  result, $\{V(t-)/\vf\pt\tp \vf\phi(t), \, \CF_t,\ t\in \CI\}$
  is a left-continuous backward martingale.

  Fix $c\in \CI$.  By Lemma \ref{lemma:tau}, $\tau_c$ is a stopping
  time of $\{\CF_t, t\in (\inf\CI, c]\}$ with
  $\tau_c\ge t_0>\inf\CI$.  Then $\vf\pt\tp\vf\phi(\tau_c)>0$ and
  $V_n(\tau_c-)/[\vf\pt\tp \vf\phi(\tau_c)]$ is well-defined.  
  By the optional sampling theorem  (cf.\ \cite{karatzas:shreve:91},
  Ch.~1, Thm~3.22),
  \begin{align*}
    E\Sbr{
      \frac{V_n(\tau_c-)}{\vf\pt\tp \vf\phi(\tau_c)}
    } =  
    E\Sbr{
      \frac{V_n(c-)}{\vf\pt\tp \vf\phi(c)}
    } = (1-\pa)n.
  \end{align*}
  Let $c\uto\sup\CI$.  By Lemma \ref{lemma:tau0}, $\tau_c\uto \tau$.
  Because $V_n(\tau_c-)\uto V_n(\tau-)\le n$, $\phi_k(\tau_c)\uto
  \phi_k(\tau)$ and $\vf\pt \tp\vf\phi(\tau_c)\ge \vf\pt\tp
  \vf\phi(t_0)>0$, by dominated convergence,
  \begin{align}\label{eq:optional}
    E\Sbr{
      \frac{V_n(\tau-)}{\vf\pt\tp \vf\phi(\tau)}
    } = (1-\pa)n.
  \end{align}
    
  On the other hand, because $\PD$ is continuous, by Lemma
  \ref{lemma:empirical}, with $\Gamma_n$ defined
  as in \eqref{eq:Gamma},
  \begin{align*}
    &
    E\Sbr{
      \frac{V_n(\tau)}{R_n(\tau)\vee 1}
    } \\
    =\
    &
    E\Sbr{\frac{V_n(\tau-)}{R_n(\tau)\vee 1}}
    +E\Sbr{\frac{V_n(\tau)-V_n(\tau-)}{R_n(\tau)\vee 1}}
    \\
    \le\
    &
    E\Sbr{\frac{V_n(\tau-)}{R_n(\tau)\vee 1}
      \ \vline\ \Gamma_n} P(\Gamma_n)
    +P(\Gamma_n^c) +
    E\Sbr{\frac{V_n(\tau)-V_n(\tau-)}{R_n(\tau)\vee 1}}.
  \end{align*}

  From \eqref{eq:M-tau}, $M_n(\tau)/\alpha \le
  [R_n(\tau)\vee 1]/n$.  On the other hand, conditional on $\Gamma_n$,
  $M_n(\tau)\ge (1-\pa)\vf\pt\tp \vf\phi(\tau)$.  Thus, by
  \eqref{eq:optional}
  \begin{align*}
    E\Sbr{\frac{V_n(\tau-)}{R_n(\tau)\vee 1}
      \ \vline\ \Gamma_n} P(\Gamma_n)
    &\le
    E\Sbr{\frac{\alpha V_n(\tau-)/n}{(1-\pa) \vf\pt\tp \vf\phi(\tau)}
      \ \vline\ \Gamma_n} P(\Gamma_n) \\
    &
    \le
    E\Sbr{\frac{\alpha V_n(\tau-)/n}{(1-\pa) \vf\pt\tp \vf\phi(\tau)}}
    = \alpha.
  \end{align*}
  By Lemma \ref{lemma:empirical}, $P(\Gamma_n^c) \le (1+|\CT_n|)
  \exp(-2 n\epsilon_n^2)$.  Finally, note that $R_n(\tau)=0$ implies
  $V_n(\tau) - V_n(\tau-)=0$ while $V_n(\tau)-V_n(\tau-)\ge 2$ implies
  at least two true nulls have the same value of $s_i$.  Since $s_i$
  under true nulls are iid with a density, the probability of the
  latter event is 0.  Therefore, $V_n(\tau)-V_n(\tau-)\le \cf{R>0}$
  a.s.  This then finishes the proof.
\end{proof}

We next proof Theorem \ref{thm:FDRg}.  For $n\ge 1$, define
\begin{align*}
  \Gamma_n
  = \Cbr{\vfc\in\Delta: \ \vfc\tp\vf\phi\in \CA_n}.
\end{align*}
For each $r>0$, corresponding to \eqref{eq:constraint-global}, define
\begin{align*}
  \Gamma_r = \Cbr{
    \begin{array}{c}
      \vfc\in\Delta: \vfc\tp \vf\phi(t) \le \PD(t)+r,
      \\[.5ex]
      \PD(t_2) - \PD(t_1) \ge \vfc\tp[\vf\phi(t_2)-\vf\phi(t_1)] - r,
      \ t_1\le t_2
    \end{array}
  }
\end{align*}
Both $\Gamma_n$ and $\Gamma_r$ are nonempty since they contain 0.  It
is not hard to see that $\Gamma_n$ and $\Gamma_r$ are convex and
closed, with $\Gamma_r$ being increasing and $\Gamma_0 = \bigcap_{r>0}
\Gamma_r$.  Also, whereas $\Gamma_n$ are random, $\Gamma_r$ are
nonrandom.

Observe that each $t\in \CI$,
\begin{gather}  \label{eq:MmGamma}
  M_n(t) = \sup\{\vfc \tp \vf\phi(t): \vfc\in\Gamma_n\},
  \quad
  m(t) = \sup\{\vfc\tp \vf\phi(t):\ \vfc\in\Gamma_0\}.
\end{gather}
Because $\Gamma_n$ is compact, there is a random $\vfc(t)\in
\Gamma_n$, such that
\begin{gather}
  M_n(t) = \vfc(t)\tp \vf\phi(t).  \label{eq:M-c-t}
\end{gather}

As commented after Theorem \ref{thm:FDRg}, we need to get $M_n\to m$.
One way to do this is to first get $\Gamma_n\to\Gamma_0$, which is
formalized below.
\begin{lemma}\label{lemma:Gamma-conv}
  Let $r>0$.  Then under the conditions of Theorem \ref{thm:FDRg},
  $P(\Gamma_0 \subset \Gamma_n \subset \Gamma_r) \to 1$.
\end{lemma}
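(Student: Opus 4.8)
The plan is to prove the two inclusions $\Gamma_0\subset\Gamma_n$ and $\Gamma_n\subset\Gamma_r$ separately, each on an event of probability tending to $1$; intersecting the two events then yields the claim, since a finite intersection of probability-$1$-in-the-limit events has probability tending to $1$. The only probabilistic input is the uniform deviation $\kappa_n:=\sup_t\Abs{\EP_n(t)-\PD(t)}$, which by the DKW inequality \cite{massart:90} satisfies $P(\kappa_n>\lambda)\le 2\exp(-2n\lambda^2)$ for every $\lambda>0$; since $n\epsilon_n^2\toi$, this gives $P(\kappa_n\le\epsilon_n/2)\to1$. Because $\PA(D_t)$ and all $\phi_k$ are continuous, $\PD$ is continuous, so $s_1,\ldots,s_n$ are almost surely distinct and $\EP_n$ is a step function with jumps of size exactly $1/n$ at the order statistics $s\Sb 1<\cdots<s\Sb n$. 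All bounds below will be uniform over $\vfc\in\Delta$, which is what lets us pass from single vectors to the set inclusions.

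For $\Gamma_0\subset\Gamma_n$: a vector $\vfc\in\Gamma_0=\bigcap_{r>0}\Gamma_r$ satisfies the defining inequalities of $\Gamma_r$ with $r=0$, i.e.\ $\vfc\tp\vf\phi(t)\le\PD(t)$ for all $t$ and $\PD(t_2)-\PD(t_1)\ge\vfc\tp[\vf\phi(t_2)-\vf\phi(t_1)]$ for $t_1\le t_2$. On $\{\kappa_n\le\epsilon_n/2\}$ these give $\vfc\tp\vf\phi(s_i)\le\PD(s_i)\le\EP_n(s_i)+\kappa_n\le\EP_n(s_i)+\epsilon_n$ and, for $t_1,t_2\in\CT_n$, $\vfc\tp[\vf\phi(t_2)-\vf\phi(t_1)]\le\PD(t_2)-\PD(t_1)\le\EP_n(t_2)-\EP_n(t_1)+2\kappa_n\le\EP_n(t_2)-\EP_n(t_1)+\epsilon_n$, which are exactly the two conditions defining $\CA_n$. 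Hence $\vfc\tp\vf\phi\in\CA_n$, i.e.\ $\vfc\in\Gamma_n$, simultaneously for all such $\vfc$.

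The harder inclusion is $\Gamma_n\subset\Gamma_r$, the difficulty being that membership in $\Gamma_n$ only constrains $\vfc\tp\vf\phi$ at the finitely many $s_i$ and on $\CT_n$, whereas $\Gamma_r$ demands inequalities at every $t$ and every pair $t_1\le t_2$. The domination condition of $\Gamma_r$ is handled by monotonicity: since $c_k\ge0$ and each $\phi_k$ is nondecreasing, $\vfc\tp\vf\phi$ is nondecreasing, so for $t\in[s\Sb j,s\Sb{j+1})$ we get $\vfc\tp\vf\phi(t)\le\vfc\tp\vf\phi(s\Sb{j+1})\le\EP_n(s\Sb{j+1})+\epsilon_n=\tfrac{j+1}{n}+\epsilon_n$, while $\PD(t)\ge\EP_n(t)-\kappa_n=\tfrac{j}{n}-\kappa_n$; thus $\vfc\tp\vf\phi(t)-\PD(t)\le\tfrac1n+\epsilon_n+\kappa_n<r$ for $n$ large on $\{\kappa_n\le\epsilon_n/2\}$ (the ranges $t<s\Sb1$ and $t\ge s\Sb n$ are disposed of directly, using that $\EP_n(t)=0$ for $t<s\Sb1$ and $\EP_n(t)=1$ for $t\ge s\Sb n$, together with $\vfc\tp\vf\phi(t)\le\sum_k c_k\le1$). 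For the slope condition, write $g(t)=\PD(t)-\vfc\tp\vf\phi(t)$; the condition reads $g(t_1)-g(t_2)\le r$ for $t_1\le t_2$, while the $\CT_n$ constraints give $g(\tau_1)-g(\tau_2)\le\epsilon_n+2\kappa_n$ for $\tau_1<\tau_2$ in $\CT_n$. The key point is that $g$ has a modulus of continuity $\omega(\cdot)\le\omega_{\PD}(\cdot)+\sum_k\omega_{\phi_k}(\cdot)$ that is independent of $\vfc\in\Delta$ (as $0\le c_k$ and $\sum_k c_k\le1$); so given $r$ one fixes $\delta>0$ with $\omega(2\delta)<r/4$ and, using that $\CT_n$ is increasingly dense, approximates $t_1,t_2$ by $\tau_1<\tau_2\in\CT_n$ within $\delta$ to get $g(t_1)-g(t_2)\le\omega(\delta)+(\epsilon_n+2\kappa_n)+\omega(\delta)<r/2+r/4$ for $n$ large, while the case $t_2-t_1\le2\delta$ is immediate from $g(t_1)-g(t_2)\le\omega(2\delta)<r/4$.

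I expect the slope condition to be the main obstacle, for two reasons. First, the estimate must hold uniformly over all $\vfc\in\Gamma_n$ at once, which is exactly why it is essential that the modulus of continuity of $g=\PD-\vfc\tp\vf\phi$ can be bounded independently of $\vfc$; this is where the constraint $\vfc\in\Delta$ is used. Second, when $\CI$ is unbounded the notion ``increasingly dense'' only furnishes $\CT_n$-points near any fixed compact subinterval, so one must first reduce to a compact core: choosing $A,B\in\CI$ with $\PD(A)$ and $1-\PD(B)$ small exploits that $\PD$ and each $\phi_k$ stabilize to their limits ($0$ near $\inf\CI$, and $\PD\to1$, $\phi_k\to1$ near $\sup\CI$), so that $g$ varies by less than $r$ outside $[A,B]$ and only $t_1,t_2\in[A,B]$ require the density argument. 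Once both inclusions hold on events whose probability tends to $1$, their intersection does as well, which is the assertion of Lemma \ref{lemma:Gamma-conv}.
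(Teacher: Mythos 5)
Your proof is correct, and its overall architecture matches the paper's: both establish $\Gamma_0\subset\Gamma_n$ on the DKW event $\{\sup_t|\EP_n(t)-\PD(t)|\le\epsilon_n/2\}$ exactly as you do, and both handle the slope condition defining $\Gamma_r$ by combining the uniform continuity of $\PD$ and the $\phi_k$ (uniform over $\vfc\in\Delta$), the increasing density of $\CT_n$ on a compact core, and a truncation to tails where all functions are within a small amount of their limits. Where you genuinely depart from the paper is the domination condition $\vfc\tp\vf\phi(t)\le\PD(t)+r$. The paper proves it by picking, for each $t$ in the compact core, a sample point $s_i$ with $|t-s_i|<\delta$ and invoking equicontinuity; this requires that $S_n=\{s_1,\ldots,s_n\}$ be almost surely increasingly dense in $\CI$, which is precisely where the hypothesis that $\vfc\tp\vf\phi$ is strictly increasing (so that the true-null $s_i$ fill out $\CI$) enters the paper's argument. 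Your sandwich between consecutive order statistics --- $\vfc\tp\vf\phi(t)\le\vfc\tp\vf\phi(s\Sb{j+1})\le (j+1)/n+\epsilon_n$ against $\PD(t)\ge j/n-\kappa_n$ --- uses only the monotonicity of $\vfc\tp\vf\phi$ and the step structure of $\EP_n$, needs no density of the sample at all, and yields the sharper uniform bound $\sup_t\,[\vfc\tp\vf\phi(t)-\PD(t)]\le 1/n+\epsilon_n+\kappa_n$ with no $r$-dependent losses. That makes your treatment of this half more elementary and slightly more robust than the paper's; the remaining pieces (the slope estimate $g(\tau_1)-g(\tau_2)\le\epsilon_n+2\kappa_n$ on $\CT_n$, the modulus-of-continuity interpolation, the compact-core reduction, and the final intersection of high-probability events) coincide with the paper's proof up to bookkeeping of constants, including the mixed cases where $t_1$ and $t_2$ straddle the core, which both arguments dispose of by the same splitting.
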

\begin{proof}
  By the assumptions, $\PD(t)$ is continuous.  Let
  \begin{align*}
    E_n = \Cbr{\sup_t \Abs{\EP_n(t) - \PD(t)} \le  \epsilon_n/2}.
  \end{align*}
  Then, as in the proof of Lemma \ref{lemma:empirical}, for $n\ge 1$,
  as long as $\exp(-n\epsilon_n^2/2)\le 1/2$, $P(E_n^c) \le
  2\exp\Cbr{-n\epsilon_n^2/2}$.  It is not hard to see that $E_n$
  implies $\Gamma_0 \subset\Gamma_n$.  As $n\epsilon_n^2\toi$,
  $P(\Gamma_0 \subset \Gamma_n) \ge P(E_n) \to 1$.

  Since $\vfc\tp\vf\phi$ is supported by and strictly increasing in
  $\CI$, almost surely, as $n\toi$, the set of $s_i$ under true nulls
  is increasingly dense in $\CI$, and thus so is $S_n=\{\eno s n\}$.
  Because $\phi_k$ and $Q$ are continuous distribution functions, they
  are equicontinuous.  Given $r>0$, fix $C>0$ and $\delta>0$, such
  that 
  \begin{gather*}
    \max_k [\phi_k(-C)+1-\phi_k(C)] + \PD(-C)+1-\PD(C)<r, \\
    \max_k |\phi_k(s)-\phi_k(t)| + |\PD(s)-\PD(t)| < r,
    \ \text{if}\ |s-t| < \delta.
  \end{gather*}

  Let $E_n'=\{\delta(S_n, [-C,C])+\delta(\CT_n, [-C,C])<\delta\}$.
  Conditional on $E_n\cap E_n'$, if $t\in [-C,C]$, then
  $|\PD(t)-\EP_n(t)| \le \epsilon_n$ 
  and there is $s_i$ with $|t-s_i|<\delta$.  Let $\vfc\in
  \Gamma_n$.  By $c_k\ge 0$, $c_1+\cdots+c_L\le 1$ and $\vfc\tp
  \vf\phi(s_i) \le \EP_n(s_i)+\epsilon_n$,
  \begin{align*}
    \vfc\tp\vf\phi(t)
    &
    \le \vfc\tp\vf\phi(s_i) + \max_k
    |\phi_k(t)-\phi_k(s_i)| \\
    &
    \le \EP_n(s_i) + \epsilon_n + r \\
    &
    \le \PD(s_i) + 2\epsilon_n + r \\
    &
    < \PD(t) + 2\epsilon_n + 2r.
  \end{align*}
  If $t\le -C$, then $\vfc\tp\vf\phi(t) \le \max \phi_k(-C) \le r
  \le \PD(t)+r$.  If $t\ge C$, then $\vfc\tp\vf\phi(t) \le 1 \le
  \PD(t)+r$.  In any case, $\vfc\tp\vf\phi(t) \le
  \PD(t)+2\epsilon_n + 2r$.

  Similarly, for $t_1<t_2$, it can be shown that
  $\vfc\tp [\vf\phi(t_2) - \vf\phi(t_1)] < \PD(t_2) - \PD(t_1) +
  3\epsilon_n + 4r$.  As a result, $\vfc\in \Gamma_\sigma$, with
  $\sigma = 3\epsilon_n+4r$.  Then $E_n\cap E_n'\subset \{\Gamma_n
  \subset \Gamma_\sigma\}$.  Because $\epsilon_n\to 0$, $P(E_n\cap
  E_n')\to 1$ and $r$ is arbitrary, the proof is complete.
\end{proof}

\begin{lemma} \label{lemma:M-conv}
  Suppose $\pa < 1$.  Then, under the conditions of Theorem
  \ref{thm:FDRg}, as $n\toi$, $P(M_n\in C(\Reals)$\/$) \to 1$ and $\sup
  |M_n-m|\convP 0$.  Also, $m\in C(\Reals)$.
\end{lemma}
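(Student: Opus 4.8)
The plan is to read off the convergence of the random functions $M_n$ from the set convergence already supplied by Lemma \ref{lemma:Gamma-conv}. Writing $M_n(t)=\sup_{\vfc\in\Gamma_n}\vfc\tp\vf\phi(t)$ and $m(t)=\sup_{\vfc\in\Gamma_0}\vfc\tp\vf\phi(t)$ as in \eqref{eq:MmGamma}, I would introduce the deterministic intermediate family $m_r(t)=\sup_{\vfc\in\Gamma_r}\vfc\tp\vf\phi(t)$ for $r>0$. The whole argument then reduces to sandwiching $M_n$ between $m$ and $m_r$ on the event $\{\Gamma_0\subset\Gamma_n\subset\Gamma_r\}$, which Lemma \ref{lemma:Gamma-conv} shows has probability tending to $1$. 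Throughout, the hypothesis $\pa<1$ enters only to invoke that lemma, whose proof relies on a positive fraction $1-\pa$ of true nulls so that the true-null $s_i\sim\vf\pt\tp\vf\phi$, being strictly increasing and hence of full support, become dense in $\CI$.

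First I would settle continuity. Since each $\phi_k$ is a continuous distribution function, it is bounded by $1$ and uniformly continuous on $\Reals$, so the family $\{\vfc\tp\vf\phi:\vfc\in\Delta\}$ is uniformly bounded and uniformly equicontinuous. As $\Gamma_0$ and every realization of $\Gamma_n$ are nonempty (they contain $\vf0$) compact subsets of $\Delta$, the two suprema $m$ and $M_n$ are continuous, indeed uniformly continuous, on $\Reals$. This gives $m\in C(\Reals)$ at once, and $M_n\in C(\Reals)$ with probability one, so in particular $P(M_n\in C(\Reals))\to1$.

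Next I would show $\sup_t|m_r-m|\to0$ as $r\downarrow0$. Because the $\Gamma_r$ are compact, increasing in $r$, with $\Gamma_0=\bigcap_{r>0}\Gamma_r$, the Hausdorff distance $d_H(\Gamma_r,\Gamma_0)\to0$: any points $\vfc_j\in\Gamma_{r_j}$ with $r_j\downarrow0$ have, by compactness of $\Gamma_{r_1}$, a limit point lying in every $\Gamma_r$ and hence in $\Gamma_0$, which precludes a persistent gap. Using $|\phi_k|\le1$, so that $(\vfc-\vfc')\tp\vf\phi(t)\le\|\vfc-\vfc'\|_1$, and choosing for a maximizer $\vfc_r$ of $m_r(t)$ a nearest point $\vfc_0\in\Gamma_0$, I obtain $0\le m_r(t)-m(t)\le C_L\,d_H(\Gamma_r,\Gamma_0)$ for all $t$, with $C_L$ depending only on $L$; hence $\sup_t|m_r-m|\to0$.

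Finally I would assemble the sandwich. Given $\eta>0$, fix $r$ with $\sup_t(m_r-m)<\eta$. On $\{\Gamma_0\subset\Gamma_n\subset\Gamma_r\}$, monotonicity of the supremum over nested sets gives $m(t)\le M_n(t)\le m_r(t)$ for every $t$, whence $\sup_t|M_n-m|<\eta$; by Lemma \ref{lemma:Gamma-conv} this event has probability tending to $1$, so $P(\sup_t|M_n-m|\ge\eta)\to0$, and since $\eta$ is arbitrary, $\sup_t|M_n-m|\convP0$. I expect the main obstacle to be the third step: converting the purely set-theoretic limit $\Gamma_r\downarrow\Gamma_0$ into uniform-in-$t$ control of $m_r-m$, which is where the compactness of $\Gamma_0$, the nesting of the $\Gamma_r$, and the equicontinuity and boundedness of $\vf\phi$ must all be combined, and where a merely pointwise-in-$t$ estimate would not suffice.
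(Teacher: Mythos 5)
Your proposal is correct and follows essentially the same route as the paper's proof: continuity of $M_n$ and $m$ from equicontinuity of $\{\vfc\tp\vf\phi\}$ over the compact sets $\Gamma_n$, $\Gamma_0$, then a sandwich on the event $\{\Gamma_0\subset\Gamma_n\subset\Gamma_r\}$ supplied by Lemma \ref{lemma:Gamma-conv}, with compactness of the nested $\Gamma_r$ converting $\Gamma_r\dto\Gamma_0$ into a uniform bound. Your intermediate function $m_r$ and the Hausdorff-distance estimate are just an explicit repackaging of the paper's argument, which instead takes the maximizer $\vfc(t)\in\Gamma_n\subset\Gamma_r$ and its nearest point in $\Gamma_0$ to get $M_n(t)\le m(t)+\sqrt{L}\sigma$ pointwise.
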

\begin{proof}
  Because each $\phi_k$ is bounded, nondecreasing and continuous,
  $\vf\phi$ is uniformly continuous on $\Reals$.  Since $\Gamma_n$ is
  compact, $\vfc\tp\vf\phi(t)$, $\vfc\in\Gamma_n$ as a family of
  functions in $t$ are equicontinuous and uniformly bounded.  It
  follows that $M_n\in C(\Reals)$.  Likewise, since $\Gamma_0$ is
  compact, $m\in C(\Reals)$.

  Given $\sigma>0$, since $\Gamma_r$ is compact and $\Gamma_r\dto
  \Gamma_0$ as $r\dto 0$, there is $r>0$ such that for all
  $\vfc\in\Gamma_r$, $d(\vfc, \Gamma_0)<\sigma$.  Conditional on
  $\Gamma_0 \subset\Gamma_n$, by \eqref{eq:MmGamma}, $m(t)\le M_n(t)$
  for all $t$.  On the other hand, conditional on $\Gamma_n
  \subset\Gamma_r$, for any $t$, there is $\vfc_0(t)\in
  \Gamma_0$ such that $|\vfc(t) - \vfc_0(t)|\le \sigma$, where
  $\vfc(t)$ is defined as in \eqref{eq:M-c-t}.  Then
  \begin{align*}
    &
    |M_n(t) - \vfc_0(t)\tp \vf\phi(t)|
    \le |\vfc(t) - \vfc_0(t)||\vf\phi(t)| \le \sqrt{L}\sigma \\
    \implies\
    &
    M_n(t) \le \vfc_0(t)\tp \vf\phi(t) + \sqrt{L}\sigma \le m(t) +
    \sqrt{L} \sigma.
  \end{align*}
  Thus, $\{\Gamma_0 \subset \Gamma_n \subset\Gamma_r\}\subset \{0
  \le M_n(t)-m(t) \le \sqrt{L}\sigma$ all $t\}$.  Because $\sigma$ is
  arbitrary, by Lemma \ref{lemma:Gamma-conv}, $\sup |M_n - m|\convP
  0$.
\end{proof}

\begin{proof}[Proof of Theorem \ref{thm:FDRg}]
  The proof follows closely the one in \cite{genovese:was:02}.  By
  Assumption~A and the continuity of $m$ and $Q$, for any
  $0<\epsilon\ll t_*-t_0$,
  \begin{align*}
    \delta = \min\Cbr{
      \inf_{t\in (t_0+\epsilon, t_*-\epsilon)} [\alpha \PD(t) - m(t)], \
      \inf_{t>t_*+\epsilon} [m(t) - \alpha \PD(t)]
    }>0.
  \end{align*}

  Let $\PD_n(t) = [R_n(t)\vee 1]/n$.  As $n\toi$, because $\sup
  |\PD_n-\PD| \convP 0$ and $\sup |M_n-m|\convP 0$, the
  probability that
  \begin{align*}
    \min\Cbr{
      \inf_{t\in (t_0+\epsilon, t_*-\epsilon)} [\alpha \PD_n(t) -
      M_n(t)],\ \inf_{t>t_*+\epsilon}[M_n(t) - \alpha \PD_n(t)]
    }\ge \delta/2
  \end{align*}
  tends to 1, implying $P(|\tau - t_*|\le\epsilon)\to 1$.
  Therefore, $\tau\convP t_*$, which leads to the last claim of the
  theorem.  Since $t_*>\inf\CI$ and $\vfc\tp\vf\phi(t)$ is strictly
  increasing, $\PD(t_*)\ge (1-\pa)\vf\pt\tp\vf\phi(t_*)>0$.  By the Week
  Law of Large Numbers and dominated convergence,
  \begin{align*}
    \fdr = E\Sbr{
      \frac{V_n(\tau)/n}{\PD_n(\tau)}
    } \to \frac{(1-\pa)\vf\pt\tp\vf\phi(t_*)}{\PD(t_*)} \le
    \frac{m(t_*)}{\PD(t_*)} = \alpha,
  \end{align*}
  where the last equality is due to the continuity of $m$ and
  $\PD$ at $t_*$.
\end{proof}

\section{Proofs for Section \ref{sec:mle}}
\label{sec:append-mle}
We need two lemmas for the proof of Proposition \ref{prop:mle}.
\begin{lemma} \label{lemma:mle-S}
  Suppose $\eno f L$ are linearly independent.  Then $S$ in
  \eqref{eq:mle-S} is a convex compact set.
\end{lemma}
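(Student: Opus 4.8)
The plan is to establish the three defining properties of $S$ separately—convexity, closedness, and boundedness—and to observe that closedness together with boundedness in $\Reals^L$ yields compactness.

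Convexity and closedness I would dispose of immediately from the form of $S$. The equality $c_1+\cdots+c_L=1$ defines an affine hyperplane, and for each $x$ the requirement $\vfc\tp\vff(x)\ge 0$ defines a closed half-space in $\vfc$. Reading $\vfc\tp\vff\ge 0$ as holding $\ell$-a.e., $S$ is the intersection of this hyperplane with $\bigcap_x\{\vfc:\vfc\tp\vff(x)\ge 0\}$, hence itself closed and convex.

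The heart of the matter is boundedness, and this is where linear independence of $\eno f L$ enters. I would argue by contradiction. If $S$ were unbounded, there would be $\vfc^{(m)}\in S$ with $\Abs{\vfc^{(m)}}\toi$; passing to a subsequence, the unit vectors $\vfu^{(m)}=\vfc^{(m)}/\Abs{\vfc^{(m)}}$ converge to some $\vfu$ with $\Abs{\vfu}=1$. Dividing the identity $\sum_k c_k^{(m)}=1$ by $\Abs{\vfc^{(m)}}$ and letting $m\toi$ gives $\sum_k u_k=0$. Dividing the inequality $\vfc^{(m)}\tp\vff(x)\ge 0$ by $\Abs{\vfc^{(m)}}$ and taking the pointwise limit gives $\vfu\tp\vff\ge 0$ $\ell$-a.e.

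To finish I would integrate. Since each $f_k$ is a density, $\int \vfu\tp\vff\,d\ell=\sum_k u_k\int f_k\,d\ell=\sum_k u_k=0$, so the nonnegative function $\vfu\tp\vff$ has vanishing integral and therefore equals $0$ $\ell$-a.e.; that is, $\sum_k u_k f_k=0$ a.e. Linear independence then forces $\vfu=\vf 0$, contradicting $\Abs{\vfu}=1$. Hence $S$ is bounded and, being closed, compact. The only point requiring care—the main, if modest, obstacle—is the interchange of limit and inequality in the pointwise passage: one should fix a single $\ell$-null exceptional set, namely the countable union of those arising along the subsequence, before letting $m\toi$. Everything else is routine, and the linear-independence hypothesis is used at exactly one place.
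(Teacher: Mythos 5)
Your proof is correct and follows essentially the same route as the paper's: boundedness by contradiction, normalizing the divergent sequence in $S$, extracting a limiting direction $\vfu$ with $\sum_k u_k=0$ and $\vfu\tp\vff\ge 0$, and integrating (each $f_k$ having integral $1$) to force $\vfu\tp\vff=0$ a.e., contradicting linear independence. The only difference is cosmetic: you normalize by the Euclidean norm $\Abs{\vfc^{(m)}}$, whereas the paper normalizes by the sum of the negative coordinates and splits the indices into positive and negative parts; your bookkeeping is slightly cleaner, but the mathematical content is identical.
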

\begin{proof}
  It is easy to see that $S$ is convex and closed, so it suffices
  to show $S$ is bounded.  Suppose there are $\vfc_l\in S$ with
  $|\vfc_l| \toi$ as $l\toi$.  Since $c_{l1} +\cdots+
  c_{lk}=1$, this implies $\max_k c_{lk}\toi$ and $\min_k c_{lk}\to
  -\infty$.  There is a subsequence of $\vfc_l$ and a partition
  of $\Theta$ into $\{\theta_{i_1}, \ldots, \theta_{i_r}\}$ and
  $\{\theta_{j_1}, \ldots, \theta_{j_t}\}$, with $r>0$, $t>0$ and
  $r+t=L$, such that $c_{li_s}\ge 0$ and $c_{lj_s}<0$ for each
  $\vfc_l$ in the subsequence.  Without loss of generality, assume
  $c_{li}\ge 0$ for $i=1,\ldots, r$ and $c_{li}<0$ for $i=r+1, \ldots,
  L$.  Denote $d_{li} = -c_{l,r+i}$ for $i=1,\ldots, t$.  Then for
  every $x$,
  \begin{gather*}
    \sum_{k=1}^r c_{lk} f_k(x) \ge \sum_{k=1}^t d_{lk} f_{r+k}(x), \ \
    \sum_{k=1}^r c_{lk} =1+M_l, \text{ with } M_l=\sum_{k=1}^t d_{lk}.
  \end{gather*}
  Divide both sides of the inequality by $M_l$ and let $l\toi$.
  Since $M_l\toi$, there is a sequence of $l$ along which
  $(c_{l1}, \ldots, c_{lr})\tp/M_l$ and $(d_{l1}, 
  \ldots, d_{lt})\tp/M_l$ have limits, say $(\eno u r)\tp$
  and $(\eno v t)\tp$.  Then
  \begin{gather*}
    \sum_{k=1}^r u_k f_k(x) \ge \sum_{k=1}^t v_k f_{r+k}(x), \text{
      all } x.
  \end{gather*}
  It is easy to see that $u_k\ge 0$, $v_k\ge 0$ and $\sum u_k=\sum
  v_k=1$.  Because the integrals of both sides are equal to 1,
  equality must hold.  As a result, $\eno f L$ are linearly
  dependent, which is a contradiction.  
\end{proof}

\begin{lemma}\label{lemma:mle-l}
  Suppose $\int q|\ln f_k|<\infty$ for all $k$.
  \paragraph{1)} For $\vfc\in S^o$ and $r>0$, if $\vfc+\vfv\in S$
  $\forall\vfv\in B(\vf0, r)$ with $\sum v_k=0$, then
  \begin{align} \label{eq:f-lowbound}
    \vfc\tp \vff(x) \ge r[M_f(x) - m_f(x)], \quad\text{all } x,
  \end{align}
  where $M_f(x) = \max_k f_k(x)$ and $m_f(x) = \min_k f_k(x)$.
  \paragraph{2)} For any $\vfc\in S^o$, $\ln(\vfc\tp\vff)\in L^1(\PD)$.
  \paragraph{3)} Let $\ell(\vfc) := \int q\ln(\vfc\tp\vff)$.  Then
  $\ell \in C(S^o)$.
  \paragraph{4)} For any $\vfc\in S^o$ and $x$, $\vfc\tp\vff(x)=0 \iff
  \vff(x)=\vf0$.
  \paragraph{5)} If $\eno f L$ are linearly independent, then
  $\ell$ is strictly concave in $S^o$.
\end{lemma}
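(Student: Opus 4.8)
The plan is to obtain concavity first and then upgrade it to strict concavity by analyzing the equality case. For the concavity, I would note that for each fixed $x$ the map $\vfc\mapsto \vfc\tp\vff(x)$ is linear, so $\vfc\mapsto \ln(\vfc\tp\vff(x))$ is concave on the set where $\vfc\tp\vff(x)>0$; by part 4) of the lemma this is exactly where $\vff(x)\neq\vf0$, and $q$ vanishes on $\{\vff=\vf0\}$ (see below), so the integrand is only ever evaluated where it is finite. Since $S^o$ is convex, integrating the pointwise concave integrand against the nonnegative measure $q\,dx$ preserves concavity, and $\ell$ is finite on $S^o$ by part 2). Hence $\ell$ is concave on $S^o$.

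To force strictness I would argue by contradiction, invoking the \emph{strict} concavity of $\ln$. Suppose $\ell$ is not strictly concave, so there are distinct $\vfc_0,\vfc_1\in S^o$ and $\lambda\in(0,1)$ with $\ell\big((1-\lambda)\vfc_0+\lambda\vfc_1\big)=(1-\lambda)\ell(\vfc_0)+\lambda\ell(\vfc_1)$. Because the integrand is pointwise concave and its integral against $q\,dx$ attains the chord value, the pointwise concavity inequality must be an equality for $q$-almost every $x$. On the support of $q$ both $\vfc_0\tp\vff(x)$ and $\vfc_1\tp\vff(x)$ are strictly positive (part 4)), so strict concavity of $\ln$ forces $\vfc_0\tp\vff(x)=\vfc_1\tp\vff(x)$, i.e. $\vfd\tp\vff(x)=0$, for $q$-a.e. $x$, where $\vfd:=\vfc_0-\vfc_1\neq\vf0$ and $\sum_k d_k=0$ (both $\vfc_i$ satisfy $\sum c_k=1$).

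The main obstacle is upgrading this $q$-a.e. statement to a Lebesgue-a.e. statement so that the hypothesis of linear independence can be used, since $q$ can vanish on a large set. Here I would exploit part 4) applied to $\vf\pt\in S^o$: it gives $\vf\pt\tp\vff(x)=0$ iff $\vff(x)=\vf0$, so $q(x)=[1-\pa+\pa\rho(x)]\,\vf\pt\tp\vff(x)$ is strictly positive exactly off the set $N=\{x:\vff(x)=\vf0\}$ (using $1-\pa+\pa\rho\ge 1-\pa>0$). Consequently the $q$-a.e. relation yields $\vfd\tp\vff=0$ for Lebesgue-a.e. $x\notin N$, while on $N$ we have $\vff=\vf0$, so $\vfd\tp\vff=0$ there trivially. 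Thus $\vfd\tp\vff=0$ Lebesgue-a.e. on $\Reals^d$ with $\vfd\neq\vf0$, contradicting the linear independence of $\eno f L$. This contradiction shows that no segment can make $\ell$ affine, establishing that $\ell$ is strictly concave on $S^o$.
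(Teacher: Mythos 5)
Your proposal establishes only part 5) of the lemma; parts 1)--4) are never proved. This is not a stylistic omission: your argument for part 5) leans on them in an essential way --- you cite part 2) to know $\ell$ is finite on $S^o$, and you invoke part 4) twice (once for $\vfc_0,\vfc_1$, once for $\vf\pt$) to locate the zero set of $q$ --- so as written the proposal proves one claim of the lemma from the other four, and those four carry the real technical content. Part 1) needs the perturbation argument: from $(\vfc+\vfv)\tp\vff(x)\ge 0$ for all $\vfv\in B(\vf0,r)$ with $\sum v_k=0$, take $v_k=-r$ at an index attaining $M_f(x)$, $v_k=r$ at an index attaining $m_f(x)$, and $v_k=0$ elsewhere, to get \eqref{eq:f-lowbound}. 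Part 2) needs a two-sided bound: combining \eqref{eq:f-lowbound} with a case analysis on whether $M_f(x)>m_f(x)/\lambda$ (for a suitable fixed $\lambda\in(0,1)$, using that $\sum c_k^-$ is bounded on the compact set $S$) yields $\vfc\tp\vff\ge \kappa(r\wedge 1)m_f$, while $\vfc\tp\vff\le\kappa' M_f$; hence $|\ln(\vfc\tp\vff)|\le\max\bigl(|\ln(\kappa' M_f)|,\,|\ln(\kappa(r\wedge 1)m_f)|\bigr)$, which is $q$-integrable by the hypothesis $\int q|\ln f_k|<\infty$. Part 3) then follows by dominated convergence, and part 4) is an immediate consequence of \eqref{eq:f-lowbound}. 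None of this appears in the proposal, so as a proof of the lemma it is incomplete.

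The part you do prove is correct, and it is essentially the paper's own argument: concavity from concavity of $\ln$, equality in the chord inequality forcing $\vfc_0\tp\vff=\vfc_1\tp\vff$ $q$-a.e., then part 4) applied to $\vf\pt$ (together with $1-\pa+\pa\rho\ge 1-\pa>0$) showing $q>0$ exactly off $\{\vff=\vf0\}$, so the identity holds Lebesgue-a.e.\ and contradicts linear independence of $\eno f L$. Your upgrade from $q$-a.e.\ to Lebesgue-a.e.\ is in fact spelled out more carefully than in the paper's proof, and both arguments tacitly use $\vf\pt\in S^o$, which is an assumption of Proposition \ref{prop:mle} rather than of the lemma itself. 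But you should supply proofs of 1)--4), on which everything else rests.
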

\begin{proof}
  1) For any $\vfv\in B(\vf0,r)$ with $\sum v_k=0$, by $(\vfc +
  \vfv)\tp \vff(x) \ge 0$, $\vfc\tp \vff(x) \ge -\sum v_k f_k(x)$.
  Let $v_k = -r$ if $k=\min\{i: f_i(x) = M_f(x)\}$, $v_k=r$ if $k =
  \min\{i: f_i(x) = m_f(x)\}$, and $v_k=0$ otherwise.  Then
  \eqref{eq:f-lowbound} follows.

  2) Let $t^+ = t\vee 0$ and $t^-=(-t)\vee 0$.
  By Lemma \ref{lemma:mle-S}, $\sum c_k^-$ and $\sum
  c_k^+ = \sum c_k^-+1$ are bounded on $S$.  Fix $\lambda\in (0,1)$
  such that $(1-\lambda) \sum c_k^- \le \lambda/2$ on $S$.
  If $M_f(x) > m_f(x)/\lambda$, then by \eqref{eq:f-lowbound},
  $\vfc\tp\vff(x) \ge r(\lambda^{-1}-1) m_f(x)$.  If $M_f(x)\le
  m_f(x)/\lambda$, then 
  \begin{align*}
    \vfc\tp \vff(x)&
    = \sum c_k^+f_k(x) - \sum c_k^-f_k(x) \\
    &
    \ge \sum c_k^+ m_f - \sum c_k^- M_f \\
    &
    \ge \Grp{\sum c_k^+ - \nth\lambda\sum c_k^-} m_f\\
    &
    =\Sbr{1-\Grp{\nth\lambda-1}\sum c_k^-} m_f \ge m_f/2.
  \end{align*}
  Thus, there is a constant $\kappa>0$ such that
  $\vfc\tp \vff(x)\ge \kappa (r\wedge 1) m_f$. 

  On the other hand, $\vfc\tp\vff(x) \le \sum c_k^+ M_f(x) \le \kappa'
  M_f(x)$, where $\kappa'<\infty$ is another constant.  As a result,
  \begin{align} \label{eq:mle-bound}
    |\ln [\vfc\tp\vff(x)]| \le \max\Grp{
      \Abs{\ln [\kappa' M_f(x)]},\,
      \Abs{\ln [\kappa (r\wedge 1) m_f(x)]}
    }
  \end{align}
  Then by $\ln f_k\in L^1(\PD)$, $\ln (\vfc\tp\vff)\in L^1(\PD)$.

  3) Follows from \eqref{eq:mle-bound} and dominated convergence.

  4) If $\vfc\tp\vff(x)=0$, then by \eqref{eq:f-lowbound},
  $M_f(x)=m_f(x)$ and hence $f_k(x)$ are all equal.  As a result,
  $f_k(x) = \vfc\tp\vff(x) = 0$.

  5) For $\vfc_1$, $\vfc_2\in S^o$ and $\theta\in (0,1)$, since $S^o$
  is convex, $\vfc:=(1-\theta)\vfc_1 + \theta\vfc_2 \in S^o$. 
  Because $\ln z$ is strictly concave on $(0,\infty)$,
  $(1-\theta)\ell(\vfc_1) + \theta\ell(\vfc_2) \le \ell(\vfc)$, with
  ``$=$''$\iff$ $\vfc_1\tp \vff(x) = \vfc_2\tp \vff(x)$ for $x$ with
  $q(x)>0$.  On the other hand, if $q(x)=0$, then $\vfc\tp\vff(x)=0$
  and by 4), $\vff(x)=\vf0$.  Therefore, ``$=$'' implies
  $\vfc_1\tp\vff(x)=\vfc_2\tp \vff(x)$ for all $x$.  Since $f_k$ are
  linearly independent, it follows that ``$=$'' $\iff \vfc_1 =
  \vfc_2$.  Therefore, $\ell$ is strictly concave.
\end{proof}

\begin{proof}[Proof of Proposition \ref{prop:mle}]
  By Lemma \ref{lemma:mle-l}, for $\vfc \in S^o$ and $X\sim\PD$, $\ln
  [\vfc\tp \vff(X)]\in L^1$, so by the Weak Law of Large Numbers,
  as $n\toi$, $n^{-1}\sum_{i=1}^n \ln [\vfc\tp \vff(X_i)]\convP
  \ell(\vfc)$.  Since $S$ is compact and $\ell$ is continuous and
  strictly 
  concave on $S^o$, by standard argument, if $\ell$ has a maximum
  point in $S^o$, then the point is unique and $\hat{\vf\pt}_n$
  converges in probability to it.  Thus, to finish the proof, it
  suffices to show that $\vf\pt$ is the maximum point of $\ell(\vfc)$
  if and only if $\int \rho f_k=1$.

  Let $\pi$ be the map $\vfc\to (\eno c {L-1})\tp$ and $d_k(x) =
  f_k(x) - f_L(x)$, $k<L$.  Since $c_1+\cdots+c_L=1$ for $\vfc\in S$,
  then
  \begin{align*}
    \vfc\tp \vff(x) = f_L(x)+\sum_{k=1}^{L-1}
    c_k[f_k(x)-f_L(x)] = f_L(x) + \pi(\vfc)\tp \vfd(x).
  \end{align*}
  Denote $h(\vfu,x) = f_L(x) + \vfu\tp\vfd(x)$ and $H(\vfu) = \int
  q(x) \ln h(\vfu,x)\,dx$.  Then $\ell(\vfc) = H(\pi(\vfc))$.  Since
  $\ell$ is strictly concave in $S^o$, so is $H$ on $\Gamma^o$, with
  $\Gamma= \pi(S)=\{\vfu: f_L + \vfu\tp \vfd\ge 0\}$.  Note that $\pi:
  S\to \Gamma$ is bijective with $\pi^{-1}(\vfu) = (\vfu, 1-\sum u_k)$
  and $\pi(S^o) = \Gamma^o$.  It remains to be seen that $H$ is
  differentiable in $\Gamma^o$, with
  \begin{align*} 
    \frac{\partial H(\vfu)}{\partial u_k}
    = \int \frac{q(x)d_k(x)}{h(\vfu,x)}\,dx, \quad
    k=1,\ldots,L-1.
  \end{align*}
  Once this obtains, by the strict concavity of $H$ and
  $(\eno\pt{L-1})\tp\in\Gamma^o$,
  \begin{align*}
    &
    \text{$\vf\pt$ is the maximum point of $\ell$} \\
    \iff&
    \text{$(\eno\pt{L-1})$ is the maximum point of $H$} \\
    \iff&
    \int \frac{q(x)d_k(x)}{\vf\pt\tp\vff(x)}\,dx = 0 \\
    \iff&
    \int [1-\pa + \pa\rho(x)][f_k(x) - f_L(x)]\,dx = 0 \\
    \stackrel{\rm (a)}{\iff}&
    \int \rho f_k = \int \rho f_L \stackrel{\rm (b)}{\iff}
    \int\rho f_k =1 \text{ all $k$},
  \end{align*}
  where (a) is due to $\pa>0$ and $\int f_k = 1$ and (b) is due to the
  fact that $\int\rho f_k$ being all equal implies each being equal to
  $\int \rho \vf\pt\tp \vff = 1$.

  Given $\vfu\in\Gamma^o$, fix $r>0$ such that $B(\vfu, 2r)\subset
  \Gamma^o$. It is not hard to see that there is $\sigma>0$, such
  that for any $\vfv\in B(\vfu, r)$, $\pi^{-1}(\vfv) + \vf w \in S^o$,
  $\forall \vf w\in B(\vf0, 2\sigma)$ with $\sum w_k = 0$.  Then by
  \eqref{eq:f-lowbound},
  \begin{align} \label{eq:f-lowbound2}
    h(\vfv, x) \ge \sigma[M_f(x)-m_f(x)], \text{ all } \vfv\in
    B(\vfu,r) \text{ and } x.
  \end{align}

 For $x$ with $q(x)>0$ by Lemma \ref{lemma:mle-l}, $h(\vfu+\vfv,x)>0$,
 $\forall \vfv\in B(\vf0, r)$.  Therefore, $\ln [h(\vfu+\vfv, x) /
 h(\vfu, x)]$ is well-defined and by Taylor's expansion, 
  \begin{align*}
    \ln \frac{h(\vfu+\vfv, x)}{h(\vfu, x)}
    = \sum_{k=1}^{L-1} \Sbr{
      \frac{d_k(x) v_k}{h(\vfu,x)} - 
      \frac{d_k(x)^2 v_k^2}{2h(\vfu + z\vfv,x)^2}
    }
  \end{align*}
  for some $z=z(\vfv, x)\in [0,1]$.  As $\vfu + z\vfv\in
  B(\vfu,r)$, by Lemma \ref{lemma:mle-l}, $h(\vfu+z\vfv,x)>0$ and
  by \eqref{eq:f-lowbound2}, $h(\vfu + z\vfv,x) \ge
  \sigma [M_f(x) - m_f(x)]$.  On the other hand, $|d_k(x)| \le
  M_f(x)-m_k(x)$.  Thus $|d_k(x)/h(\vfu + z\vfv, x)|\le
  1/\sigma$.  Likewise, $|d_k(x)/h(\vfu)|\le 1/\sigma$.  As a result,
  \begin{align*}
    H(\vfu+\vfv) - H(\vfu)
    &
    = \int q(x) \ln \frac{h(\vfu+\vfv, x)}{h(\vfu, x)}\,dx \\
    &
    = \sum_{k=1}^{L-1} v_k \int \frac{q(x) d_k(x)}{h(\vfu,x)}\,dx +
    O(|\vfv|^2),
  \end{align*}
  which finishes the proof.
\end{proof}

\section{Proofs for Section \ref{sec:simul}}
\label{sec:append-numeric}
\begin{proof}[Proof of (\ref{eq:lp-seq-modify})]
Recall that the overall distribution under true nulls is $\sum_{k=1}^L
\pt_k \PT_k$ and the distribution of $\eno X n$ is
$\PD = (1-\pa)\vf\pt\tp\vf\PT + \pa\PA$.  Then $(1-\pa)\vf\pt\tp
\vf\PT(X_i) \le \PD(X_i)$.  By the assumption, $\PD$ is continuous,
which implies that $\PD(X_i)$ are iid $\sim \unif(0,1)$.  Then for the
rank statistics 
$X\Sb 1\le X\Sb 2 \le \cdots \le X\Sb n$,
\begin{align*}
  \Grp{\PD(X\Sb k),\,1\le k\le n}
  \sim
  \Grp{
    \frac{\xi_1+\cdots+\xi_k}{\xi_1+\cdots+\xi_{n+1}},
    \, 1\le k\le n
  },
\end{align*}
where $\eno \xi {n+1}$ are iid with density $e^{-x}\cf{x\ge 0}$.  By
exponential inequality, for $\beta\in (0,1)$,
$P(\xi_1+\cdots+\xi_{n+1}<\beta(n+1)) \le (\beta e^{1-\beta})^{n+1}$.
Therefore, for each $k\le a_n$,
\begin{align*}
  &
  P\Grp{\PD(X\Sb i) \le \bar\Gamma^*(1/n; i, 1/\beta)} \\
  =\ &
  P\Grp{
    \frac{\sum_{i=1}^k \xi_i}{\sum_{i=1}^{n+1} \xi_i}
    \le (1/n) g^*(1/n; i, 1/\beta)
  } \\
  \ge\ &
  P\Grp{
    \sum_{i=1}^k \xi_i
    \le \beta g^*(1/n; i, 1/\beta),\
    \sum_{i=1}^{n+1} \xi_i \ge \beta n
  } \\
  \ge\ &
  P\Grp{
    \nth\beta\sum_{i=1}^k \xi_i
    \le  g^*(1/n; i, 1/\beta)}
  -P\Grp{\sum_{i=1}^{n+1} \xi_i < \beta n
  }
\end{align*}
Because $\beta^{-1}\sum_{i=1}^k \xi_i$ follows the Gamma distribution
with shape parameter $k$ and scale parameter $\beta$, by above
inequalities yield
\begin{align*}
  P\Grp{\PD(X\Sb i) \le \bar\Gamma^*(1/n; i, 1/\beta)} 
  \ge 1 - \nth n - (\beta e^{1-\beta})^{n+1}.
\end{align*}
As a result,
\begin{align*}
  &
  P\Grp{(1-\pa)\vf\pt\tp\vf\PT(X\Sb i) \le
    \bar\Gamma^*(1/n; i, 1/\beta),
    \ \text{all}\ i\le a_n} \\
  \ge\
  &
  P\Grp{
    \PD(X\Sb i) \le \bar\Gamma^*(1/n; i, 1/\beta), \ \text{all}\
    i\le a_n 
  } \\
  \ge\
  &
  1 - a_n \Sbr{\nth n + (\beta e^{1-\beta})^{n+1}}.
\end{align*}
Following the proof of Theorem \ref{thm:lp-sequential}, 
\begin{align*}
  \fdr \le\
  &\alpha + E\Sbr{\frac{\cf{R>0}}{R\vee 1}}\\
  &+
  \underbrace{2 (1+|\CT_n|) \exp(- 2n\epsilon_n^2)+
    a_n \Sbr{\nth n + (\beta e^{1-\beta})^{n+1}}
  }_{r_n}.
\end{align*}
Note $\beta e^{1-\beta}<1$.  With $\epsilon_n = \sqrt{\ln n/n}$ and
$|\CT_n|=\lfloor (\ln n)^2\rfloor$, it is easy to see $r_n\to 0$ as
$n\toi$.  Furthermore, for $a_n=n^{0.2}$, $\beta=0.95$, and $n=5000$,
$r_n \approx 9.64\NE{-3}$.
\end{proof}

\bibliographystyle{acmtrans-ims2}

\end{document}